\documentclass[11pt,a4paper]{article}

\usepackage[british]{babel}

\usepackage[T1]{fontenc}

\usepackage[utf8]{inputenc}

\usepackage{csquotes}

\usepackage[backend=biber, maxbibnames=10, maxcitenames=4, maxalphanames=4, bibencoding=utf8, style=alphabetic, isbn=false, url=false, doi=true, giveninits=true]{biblatex}

\renewbibmacro{in:}{}
\renewbibmacro*{doi+eprint+url}{%
	\printfield{doi}%
	\newunit\newblock%
	\iftoggle{bbx:eprint}{%
		\usebibmacro{eprint}%
	}{}%
	\newunit\newblock%
	\iffieldundef{doi}{%
		\usebibmacro{url+urldate}}%
	{}%
}
\AtEveryBibitem{\clearfield{month}}
\AtEveryBibitem{\clearfield{note}}

\usepackage{setspace}

\usepackage{lmodern}

\usepackage[indentafter]{titlesec}
\titleformat{name=\section}{}{\thesection.}{0.8em}{\centering\scshape}
\titleformat{name=\subsection}[runin]{}{\thesubsection.}{0.5em}{\bfseries}[.]
\titleformat{name=\subsubsection}[runin]{}{\thesubsubsection.}{0.5em}{\itshape}[.]
\titleformat{name=\paragraph,numberless}[runin]{}{}{0em}{}[.]
\titlespacing{\paragraph}{0em}{0em}{0.5em}
\titleformat{name=\subparagraph,numberless}[runin]{}{}{0em}{}[.]
\titlespacing{\subparagraph}{0em}{0em}{0.5em}

\usepackage{xurl}
\usepackage{mathtools}

\DeclarePairedDelimiter\abs{\lvert}{\rvert}%
\DeclarePairedDelimiter\norm{\lVert}{\rVert}%

\makeatletter
\let\oldabs\abs
\def\abs{\@ifstar{\oldabs}{\oldabs*}}
\let\oldnorm\norm
\def\norm{\@ifstar{\oldnorm}{\oldnorm*}}
\makeatother

\usepackage{amssymb}

\usepackage{amsthm}

\usepackage{mathrsfs}

\usepackage{dsfont}

\usepackage{enumitem}
\setlist[enumerate]{noitemsep, partopsep=0pt, topsep=0pt, parsep=0pt, itemsep=0pt}
\setlist[itemize]{noitemsep, partopsep=0pt, topsep=0pt, parsep=0pt, itemsep=0pt}

\usepackage{interval}

\intervalconfig{soft open fences}

\usepackage[stretch=10]{microtype}

\usepackage[affil-it, noblocks]{authblk}

\usepackage[dvipsnames]{xcolor}
\usepackage[normalem]{ulem}
\usepackage{float}

\usepackage[draft=false]{hyperref}
\hypersetup{
	colorlinks = true, %
	urlcolor   = blue, %
	linkcolor  = blue, %
	citecolor  = ForestGreen %
}
\newcommand{\linkunderline}[1]{%
	{\renewcommand{\ULthickness}{0.35pt}\ULdepth=1.6pt\relax\uline{#1}}}
\let\templatehref\href
\renewcommand{\href}[2]{\templatehref{#1}{\linkunderline{#2}}}
\ExplSyntaxOn
\NewDocumentCommand{\bibbreakablehref}{mm}{%
	\begingroup
	\ttfamily
	\tl_map_inline:nn {#2} {%
		\templatehref{#1}{\linkunderline{##1}}%
		\penalty\UrlBreakPenalty
	}%
	\endgroup
}
\ExplSyntaxOff
\DeclareFieldFormat{doi}{%
	\mkbibacro{DOI}\addcolon\space
	\bibbreakablehref{https://doi.org/#1}{#1}}
\DeclareFieldFormat{url}{%
	\mkbibacro{URL}\addcolon\space
	\bibbreakablehref{#1}{#1}}
\makeatletter
\DeclareFieldFormat{eprint:arxiv}{%
	arXiv\addcolon\space
	\ifhyperref
		{\bibbreakablehref{https://arxiv.org/\abx@arxivpath/#1}{#1}%
		 \iffieldundef{eprintclass}
		 	{}
		 	{\templatehref{https://arxiv.org/\abx@arxivpath/#1}{\linkunderline{\texttt{ }}}%
		 	 \penalty\UrlBreakPenalty
		 	 \templatehref{https://arxiv.org/\abx@arxivpath/#1}{\linkunderline{\texttt{\mkbibbrackets{\thefield{eprintclass}}}}}}}
		{\texttt{#1}%
		 \iffieldundef{eprintclass}
		 	{}
		 	{\addspace\texttt{\mkbibbrackets{\thefield{eprintclass}}}}}}
\DeclareFieldAlias{eprint:arXiv}{eprint:arxiv}
\makeatother
\renewbibmacro*{cite}{%
	\printtext[bibhyperref]{%
		\linkunderline{%
			\textbf{%
				\printfield{labelprefix}%
				\printfield{labelalpha}%
				\printfield{extraalpha}%
				\ifbool{bbx:subentry}
				{\printfield{entrysetcount}}
				{}}}}}

\usepackage{aliascnt}
\usepackage[nameinlink,noabbrev,capitalize]{cleveref}
\crefname{equation}{}{}

\usepackage[plain]{fullpage}

\newlist{theoenum}{enumerate}{1} %
\setlist[theoenum]{label=\normalfont(\roman*), ref=\theproposition~\normalfont(\roman*), noitemsep, partopsep=0pt, topsep=0pt, parsep=0pt, itemsep=0pt}
\crefalias{theoenumi}{theorem}

\usepackage{tocloft}

\newcommand{\startappendix}{%
	\appendix
	\crefalias{section}{appsec}
	\titleformat{name=\section}{}{\appendixname~\thesection.}{0.8em}{\centering\scshape}
	\addtocontents{toc}{\protect\renewcommand{\protect\cftsecpresnum}{\appendixname~}}
	\addtocontents{toc}{\protect\renewcommand{\protect\cftsecaftersnum}{.\space}}
	\addtocontents{toc}{\protect\setlength{\protect\cftsecnumwidth}{6.7em}}
	\numberwithin{figure}{section}
}

\usepackage{titlefoot}

\usepackage{pgfplots}
\pgfplotsset{compat=1.18}

\usepackage[textsize=tiny]{todonotes}

\theoremstyle{plain}
\newtheorem{theorem}{Theorem}[section]
\newtheorem*{theorem*}{Theorem}

\newaliascnt{proposition}{theorem}
\newtheorem{proposition}[proposition]{Proposition}
\aliascntresetthe{proposition}
\crefname{proposition}{Proposition}{Propositions}
\Crefname{proposition}{Proposition}{Propositions}

\newaliascnt{corollary}{theorem}

\aliascntresetthe{corollary}
\crefname{corollary}{Corollary}{Corollaries}
\Crefname{corollary}{Corollary}{Corollaries}

\newaliascnt{conjecture}{theorem}

\aliascntresetthe{conjecture}
\crefname{conjecture}{Conjecture}{Conjectures}
\Crefname{conjecture}{Conjecture}{Conjectures}

\newaliascnt{lemma}{theorem}
\newtheorem{lemma}[lemma]{Lemma}
\aliascntresetthe{lemma}
\crefname{lemma}{Lemma}{Lemmas}
\Crefname{lemma}{Lemma}{Lemmas}
\newtheorem*{lemma*}{Lemma}

\theoremstyle{definition}
\newaliascnt{definition}{theorem}
\newtheorem{definition}[definition]{Definition}
\aliascntresetthe{definition}
\crefname{definition}{Definition}{Definitions}
\Crefname{definition}{Definition}{Definitions}

\theoremstyle{remark}
\newaliascnt{example}{theorem}

\aliascntresetthe{example}
\crefname{example}{Example}{Examples}
\Crefname{example}{Example}{Examples}

\newaliascnt{remark}{theorem}
\newtheorem{remark}[remark]{Remark}
\aliascntresetthe{remark}
\crefname{remark}{Remark}{Remarks}
\Crefname{remark}{Remark}{Remarks}
\newtheorem*{remark*}{Remark}

\newtheorem*{ack*}{Acknowledgements}

\crefname{theorem}{Theorem}{Theorems}
\Crefname{theorem}{Theorem}{Theorems}

\newaliascnt{appsec}{section}
\aliascntresetthe{appsec}
\crefname{appsec}{Appendix}{Appendices}
\Crefname{appsec}{Appendix}{Appendices}

\renewcommand{\epsilon}{\varepsilon}

\let\originalleft\left
\let\originalright\right
\renewcommand{\left}{\mathopen{}\mathclose\bgroup\originalleft}
\renewcommand{\right}{\aftergroup\egroup\originalright}

\title{%
  {\large\bfseries\MakeUppercase{The measure contraction property on Grushin spaces}}%
}

\author{%
  Michael Albert%
  \protect\footnotemark[1]%
  \protect\footnotemark[3]%
  \and
  Samu\"el Borza%
  \protect\footnotemark[2]%
  \protect\footnotemark[4]%
}

\date{}

\makeatletter
\def\@maketitle{%
  \newpage
  {\centering
    \@title\par
    \vskip 1.5em%
      {\large
        \lineskip .5em%
        \begin{tabular}[t]{c}%
          \@author
        \end{tabular}\par}%
    \ifx\@date\@empty\else
      \vskip 1em%
        {\large \@date}%
    \fi
    \par}%
  \vskip 1.5em%
}

\let\template@maketitle\maketitle
\renewcommand{\maketitle}{%
  \begingroup
  \renewcommand{\thefootnote}{\fnsymbol{footnote}}%
  \long\def\@makefntext##1{%
    \parindent 0pt%
    \noindent\makebox[0pt][r]{\@makefnmark\,}##1%
  }%
  \template@maketitle

  \let\orig@makefnmark\@makefnmark

  \begingroup
  \def\@makefnmark{}%
  \long\def\@makefntext##1{%
    \parindent 0pt%
    \noindent ##1%
  }%
  \footnotetext[0]{Date: \today.}%
  \endgroup

  \footnotetext[1]{%
    Analysis and PDE Unit, Okinawa Institute of Science and Technology,
    1919-1 Tancha, Onna-son, Kunigami-gun,
    Okinawa 904-0495, Japan}%
  \footnotetext[2]{%
    Faculty of Mathematics, University of Vienna,
    Oskar-Morgenstern-Platz 1, 1090 Vienna, Austria}%

  \def\@makefnmark{}%
  \footnotetext[3]{%
    \textit{E-mails}: %
    \orig@makefnmark
    \href{mailto:michael.albert@oist.jp}%
    {\nolinkurl{michael.albert@oist.jp}};\,
    {\let\@makefnmark\orig@makefnmark \footnotemark[4]}%
    \href{mailto:samuel.borza@univie.ac.at}%
    {\nolinkurl{samuel.borza@univie.ac.at}}}%
  \endgroup
}
\makeatother				

\begin{document}

\maketitle

\providecommand{\keywords}[1]
{
    \par\noindent\textbf{\textit{Keywords---}} #1\par
}

\providecommand{\msc}[1]
{
    \noindent\textbf{\textit{MSC (2020)---}} #1\par
}

\begin{abstract}
    We determine the sharp measure contraction exponents of two families of
    Grushin-type metric measure spaces. The radial Grushin space \(\mathbb{G}^{n+m}\) is \(\mathbb{R}^{n}\times\mathbb{R}^{m}\), equipped with Lebesgue measure and generated by \(X_i=\partial_{x_i}\) and \(Y_j=|x|\partial_{y_j}\), for \(1\leq i\leq n\) and \(1\leq j\leq m\).
    We prove that \(\mathbb{G}^{n+m}\) satisfies \(\operatorname{MCP}(K,N)\) if and
    only if $N\geq n+4m$ and $K\leq 0$. We also show that, for \(\alpha\geq1\), the \(\alpha\)-Grushin plane generated by \(X=\partial_x\) and \(Y_\alpha=|x|^\alpha\partial_y\) satisfies \(\operatorname{MCP}(K,N)\) if and only if \(K\leq0\) and \(N\geq N_\alpha\), where
    \[
        N_\alpha
        :=
        1+\max_{L>1}
        \frac{(2\alpha+1)L}
        {(L-1)^{2\alpha+1}+1}.
    \]
    This resolves the conjecture posed in \cite{borza2022} and, for
    integer \(\alpha\geq2\), provides the first
    examples of real-analytic sub-Riemannian structures with noninteger
    curvature exponent. Both results recover the known curvature exponent \(5\) of the classical Grushin plane, corresponding respectively to \(n=m=1\) and \(\alpha=1\).
\end{abstract}

\keywords{Measure contraction property, Grushin spaces,
    sub-Riemannian geometry}

\msc{53C17, 49Q22, 53C23}

{\renewcommand{\contentsname}{\large Contents}%
  \small
  \tableofcontents
}

\section{Introduction}
The Grushin plane is the sub-Riemannian structure on
\(\mathbb{R}^{2}\) generated by the vector fields
\begin{align*}
    X=\partial_x,
    \qquad
    Y=x\partial_y.
\end{align*}
The Grushin model originates in the work of Baouendi and Gru\v{s}in on degenerate elliptic and hypoelliptic second-order operators of the form \(X^2+Y^2=\partial_x^2+x^2\partial_y^2\) \cite{Baouendi1967,Grushin1970}. The Grushin plane has become a standard model of a
rank-varying, non-equiregular sub-Riemannian geometry. The Grushin plane falls into the class of two-dimensional \emph{almost-Riemannian manifolds} \cite[Chapter 9]{AgrachevBarilariBoscainBook2020}. There is no shortage of work on the Grushin plane and its variants, including models obtained by modifying the generating vector fields, the underlying space, the bracket-generating step, or the reference measure.

An early
study of the control metrics associated with degenerate elliptic operators
appears in \cite{FranchiLanconelli1984}. Weighted Sobolev and Hardy
inequalities, unique continuation, regularity, and semilinear equations for
Grushin-type and related degenerate operators were investigated in
\cite{FranchiGutierrezWheeden1994,Garofalo1993,GarofaloShen1994,
    DAmbrosio2004,DomokosManfredi2010,KogojLanconelli2012,
    MontiMorbidelli2006}. Isoperimetric questions and their relation to
Heisenberg geometry were studied in
\cite{MontiMorbidelli2004,ArcozziBaldi2008,FranceschiMonti2016}.
Metric, embedding, and quasiconformal properties were considered in
\cite{WuJang-Mei2015,WuEmbedding2015,RomneyVellis2017,
    gartland2017quasiconformalmappingsgrushinplane}. Nonlinear and viscosity
equations in Grushin-type spaces were studied in
\cite{BieskeGong2006,Bieske2007Comparison,Bieske2009InfiniteHarmonic},
while heat kernels, stochastic processes, self-adjointness, and quantum
confinement were investigated in
\cite{Yutian2011,changli2014,BoscainNeel2020,BoscainPrandi2016,
    BoscainLaurent2013,GalloneMichelangeliPozzoli2019}. Geodesics and optimal
synthesis for Grushin-type models were studied in
\cite{chang04,ChangLi2012,borza2022,
    albert2025geodesicsgrushinspaces,
    albert2026optimalsynthesisradiallysymmetric}. For the broader theory of
two- and three-dimensional almost-Riemannian structures, see
\cite{AgrachevBoscainSigalotti2008,BoscainCharlotGayeMason2015}.

In this work, we study two Grushin-type metric measure spaces. The first is the \emph{\(\alpha\)-Grushin plane} \(\bigl(\mathbb G_\alpha^2,d_{CC},\mathcal L^2\bigr)\), where \(\alpha\geq1\), whose underlying space is \(\mathbb R^2\) and whose Carnot--Carathéodory distance is induced by
\[
    X=\partial_x,
    \qquad
    Y_\alpha=|x|^\alpha\partial_y.
\]
When \(\alpha\in\mathbb N\), the smooth Hörmander bracket-generating family \(\{\partial_x,x^\alpha\partial_y\}\) generates the same horizontal curves with the same lengths as \(\{X,Y_\alpha\}\). For
noninteger \(\alpha\), the generating fields have lower regularity along
\(\{x=0\}\), but this causes no difficulty for the constructions and
results used in this paper. The classical Grushin plane corresponds to the case $\alpha = 1$.

We also study the \emph{radial Grushin space}
\(\mathbb G^{n+m}\), whose underlying metric measure space is
\((\mathbb R^{n+m},d_{CC},\mathcal L^{n+m})\). Writing points as
\((x,y)\in\mathbb R^n\times\mathbb R^m\), the distance is induced by the
vector fields
\[
    X_i=\partial_{x_i},
    \qquad
    Y_j=|x|\partial_{y_j},
    \qquad
    i=1,\ldots,n,\quad j=1,\ldots,m.
\]
Although the fields \(Y_j\) are not differentiable at \(x=0\), this
apparent loss of regularity can be removed by replacing them with the
smooth redundant family \(Y_{ij}=x_i\partial_{y_j}\), where
\(i=1,\ldots,n\) and \(j=1,\ldots,m\). The two generating families
determine the same horizontal curves and assign them the same lengths,
and therefore induce the same Carnot--Carathéodory distance. The latter
family is smooth and Hörmander bracket-generating of step two, so
\(\mathbb G^{n+m}\) is a smooth rank-varying sub-Riemannian structure.
The term \emph{radial Grushin space}, which we adopt here, is not
standard.

On a Riemannian manifold, Ricci curvature is defined using the
Levi--Civita connection. On a sub-Riemannian manifold, the absence of a
canonical connection makes curvature more difficult to study. One
approach is to consider the curvature-dimension condition
\(\operatorname{CD}(K,N)\), introduced through optimal transport by
Lott--Villani and Sturm
\cite{LottVillani2009,Sturm2006I,Sturm2006II}, and the measure
contraction property \(\operatorname{MCP}(K,N)\), introduced by Ohta and
Sturm \cite{ohta2007,Sturm2006II}. These synthetic conditions are
designed to replace, in the setting of metric measure spaces, the
Riemannian bounds \(\operatorname{Ric}\geq K\) and \(\dim M\leq N\).
The condition \(\operatorname{CD}(K,N)\), which is stronger than
\(\operatorname{MCP}(K,N)\), is never satisfied by genuinely sub-Riemannian manifolds
\cite{juillet2009,juillet2020,MagnaboscoRossi2023,rizzi2023failure,NavarroPan2025} (see however \cite{BorzaTashiro2024}).

By contrast, the measure contraction property
\(\operatorname{MCP}(K,N)\) is satisfied in many sub-Riemannian,
sub-Finsler, and Sasakian structures
\cite{agrachev2014generalizedriccicurvaturebounds,juilletphdthesis,
    badreddine2018measurecontractionpropertiestwostep,rizzi2016,
    BaloghKristalySipos2014,PaulWYLee2016Discrete,Zhang2025}. The first sub-Riemannian result is due to Juillet, who proved
that the Heisenberg group satisfies \(\operatorname{MCP}(K,N)\) if and
only if \(K\leq0\) and \(N\geq5\)
\cite{juilletphdthesis,juillet2009}. More generally, compact analytic two-step sub-Riemannian manifolds equipped with a
smooth measure, as well as Lipschitz Carnot groups, satisfy a measure
contraction property
\cite{badreddine2018measurecontractionpropertiestwostep}. Consequently,
every step-two Carnot group satisfies \(\operatorname{MCP}(0,N)\) for
some finite \(N\), although the sharp exponent remains unknown in most
cases. For the classical Grushin plane,
\(\operatorname{MCP}(K,N)\) holds precisely when \(K\leq0\) and
\(N\geq5\) \cite{BarilariRizzi2019}. Nevertheless, many
sub-Riemannian structures do not satisfy any measure contraction bound.
The first such examples are the Martinet and Engel structures, which fail
\(\operatorname{MCP}(K,N)\) for every \(K\in\mathbb R\) and every
finite \(N\); the same holds for any Carnot group admitting either of
these structures as a quotient \cite{borzaMCPfailure}.
As explained in \cref{sec:preliminaries}, it is enough to consider
\(\operatorname{MCP}(0,N)\) for both spaces studied here. The infimum
of the admissible values of \(N\) is called the \emph{curvature exponent}.

Our first main result determines the curvature exponent of the radial
Grushin space.
\begin{theorem}\label{Radial Grushin MCP Theorem}
    The radial Grushin space $(\mathbb{G}^{n+m},d_{CC},\mathcal{L}^{n+m})$ satisfies $\operatorname{MCP}(K,N)$ if and only if $N\geq n+4m$ and $K\leq 0$.
\end{theorem}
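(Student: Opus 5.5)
We follow the strategy of Juillet and of Barilari--Rizzi. By the reduction announced for \cref{sec:preliminaries}, it suffices to show that \(\operatorname{MCP}(0,N)\) holds if and only if \(N\geq n+4m\). Two standard facts do the rest. Dilations \(\delta_\lambda(x,y)=(\lambda x,\lambda^2 y)\) scale \(d_{CC}\) by \(\lambda\) and \(\mathcal L^{n+m}\) by \(\lambda^{n+2m}\), so \(\operatorname{MCP}(K,N)\) with \(K>0\) would propagate to all scales and force boundedness, which fails. Moreover \(\operatorname{MCP}(0,N)\) implies \(\operatorname{MCP}(K,N)\) for every \(K\leq0\).

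By the left-invariance in \(y\) (translations in \(y\) and rotations in \(x\) are isometries preserving Lebesgue measure), \(\operatorname{MCP}(0,N)\) reduces to the following. For every base point \(q=(x_0,0)\), and for almost every endpoint \(p\) joined to \(q\) by a unique minimizing geodesic, the geodesic homotopy \(p\mapsto e_t(p)\) (the \(t\)-intermediate point) must have Jacobian bounded below:
\[
J_t(p)\geq t^N,\qquad t\in[0,1].
\]

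\textbf{Step 1 (geodesics).} We write the Hamiltonian \(H=\tfrac12(|p_x|^2+|x|^2|p_y|^2)\). Since \(p_y=\eta\) is constant, the \(y\)-rotations let us take \(\eta=|\eta| e_1\). The \(x\)-equation is then the isotropic harmonic oscillator \(\ddot x=-|\eta|^2 x\), whose explicit solution is
\[
x(s)=x_0\cos(|\eta|s)+\frac{\xi}{|\eta|}\sin(|\eta|s),
\]
and \(y\) follows by integrating \(|\eta|\,|x(s)|^2\) in the direction of \(\eta\). So the exponential map \(\exp_q(\xi,\eta)\) is explicit. Its cut time is \(\pi/|\eta|\) when \(x_0=0\); for \(x_0\neq0\) we use the known synthesis from the cited work on radially symmetric Grushin spaces.

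\textbf{Step 2 (Jacobian).} We compute \(\det D\exp_q\) in coordinates adapted to the symmetry. The directions orthogonal to the plane \(\mathrm{span}(x_0,\xi)\) in \(x\) and to \(\eta\) in \(y\) decouple. Each of the \(n-2\) such transverse \(x\)-directions contributes a factor of the form \(\sin(\theta t)/\sin\theta\)-type ratio. Each transverse \(y\)-direction contributes an SO-type ratio coming from the rotation of \(\eta\). The remaining planar block is essentially the \(n=m=1\) Grushin computation of Barilari--Rizzi. We then write the ratio \(J_t=\det D\exp_q(t\cdot)\cdot t^{n+m}/\det D\exp_q(\cdot)\) as a product of one-variable functions of \(\theta=|\eta|\) times (distance) and of \(t\).

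\textbf{Step 3 (sufficiency, the main obstacle).} We must show that each factor is bounded below by \(t^{k}\) with exponents summing to \(n+4m\). The planar Grushin block gives \(t^{5}\). Each extra \(y\)-direction should give \(t^{4}\) and each extra \(x\)-direction \(t^{1}\), for a total of \(5+(n-1)+4(m-1)=n+4m\). The hard part is the monotonicity inequalities for these explicit trigonometric ratios uniformly in \(\theta\in(0,\pi)\) and in \(x_0\). These are inequalities of the type \(\frac{\sin\theta t-\theta t\cos\theta t}{\sin\theta-\theta\cos\theta}\geq t^3\). We would first attempt them via log-concavity, i.e. show \(\frac{d}{dt}\log(f(\theta t)/t^k)\leq0\) by a power-series or Sturm comparison argument. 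The case \(x_0\neq0\) needs a continuity or perturbation argument from \(x_0=0\).

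\textbf{Step 4 (necessity).} Take \(q=0\) and let the endpoints \(p\) approach the singular set. The Heisenberg-type asymptotic \(J_t\sim t^{n+4m}\) holds as \(\theta\to0\), with the \(y\)-directions behaving like Heisenberg \(t^4\) factors. If \(N<n+4m\), this contradicts \(J_t\geq t^N\) for small \(t\). Alternatively, contracting a small ball centered near the origin toward \(0\) yields a volume \(\approx t^{n+4m}\) times the original.
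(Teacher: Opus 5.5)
Your overall architecture matches the paper's: reduce to \(\operatorname{MCP}(0,N)\), write \(\beta_t\) as \(n-1\) sine ratios times \(m-1\) transverse-\(y\) ratios times one Grushin-type block, and count \((n-1)+4(m-1)+5\). Below, your \(\xi\) is \(u_0\) and \(\omega=|\eta|\). Both directions, however, have a genuine gap.

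\textbf{Necessity.} Step 4 fails because singular base points cannot detect \(n+4m\). For \(x_0=0\) the three kinds of factors are \(\sin(\omega t)/\sin\omega\), \((\omega t-\sin\omega t\cos\omega t)/(\omega-\sin\omega\cos\omega)\) and \(t(\sin\omega t-\omega t\cos\omega t)/(\sin\omega-\omega\cos\omega)\). On \(0<\omega<\pi\) they are bounded below by \(t\), \(t^3\) and \(t^4\) respectively. Hence \(\beta_t(0,\cdot)\geq t^{n+3m}\) everywhere, with equality in the limit \(\omega\to0\): the \(y\)-directions behave like \(t^3\) there, not like Heisenberg \(t^4\) factors. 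Integrating gives \(\mathcal L^{n+m}(Z_t(0,B))\geq t^{n+3m}\mathcal L^{n+m}(B)\) for every Borel \(B\). So neither your \(\theta\to0\) asymptotic nor any set contracted toward \(0\) can force \(N>n+3m\).

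The extremal configuration sits at a Riemannian base point, on a horizontal line crossing the singular set. Take \(x_0\neq0\), \(\omega=0\), \(u_0=-3x_0\). The endpoint \((-2x_0,y_0)\) is not in \(\operatorname{Cut}(q_0)\), and \(\beta_t=t^n(t-3t^2+3t^3)^m\) has derivative \(n+4m\) at \(t=1\). So \(\beta_t\geq t^N\) near \(t=1\) forces \(N\geq n+4m\). The factor \(-3\) is chosen because it maximizes \(|x_0+u_0|^2/\int_0^1|x_0+su_0|^2\,ds\), which is the logarithmic derivative at \(t=1\) of each \(y\)-factor; the maximum is \(4\).

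\textbf{Sufficiency.} Step 3 is where the theorem actually lives, and it is left as a plan. Moreover, the proposed continuity or perturbation argument from \(x_0=0\) cannot work. The dilations rescale \((x_0,u_0)\) jointly and leave \(\omega\) fixed. So \(\omega\), the ratio \(|x_0|/|u_0|\) and the angle between \(x_0\) and \(u_0\) are genuine parameters with nothing small about them. The extremal example above has \(|u_0|=3|x_0|\), and its exponent \(n+4m\) strictly exceeds the \(n+3m\) available at \(x_0=0\). What you need are bounds uniform in all these parameters.

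For each transverse \(y\)-direction, the factor is \(G(\omega t)/G(\omega)\) with \(G(z)=\int_0^z|\zeta\cos s+u_0\sin s|^2\,ds\) and \(\zeta=\omega x_0\). The paper proves \(z\partial_zG\leq4G\) by checking that \(4G-z\partial_zG=A(z)|\zeta|^2+C(z)\langle\zeta,u_0\rangle+B(z)|u_0|^2\) is a nonnegative quadratic form, with \(A>0\), \(B\geq0\), \(4AB-C^2>0\) on \((0,\pi)\).

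Your Step 2 block structure also needs repair. When \(x_0\) and \(\xi\) are not collinear, \(\operatorname{span}(x_0,\xi)\times\mathbb R\eta\) is three-dimensional, so only \(n-2\) transverse \(x\)-directions remain and your count comes out one short. This block does not factor as a classical Grushin block times a sine. The paper instead splits additively: \(H(\omega,\zeta,u_0)=H(\omega,a,|u_0|)+\sin\omega\,|\zeta_\perp|^2\), with \(a=\langle\zeta,u_0\rangle/|u_0|\) for \(u_0\neq0\). It then combines two bounds:
\begin{itemize}
\item the Barilari--Rizzi estimate \(H(t\omega,a,b)\geq t^4H(\omega,a,b)\) for all \((a,b)\), that is, from every base point of the classical Grushin plane and not only the singular one;
\item \(\sin(t\omega)\geq t^4\sin\omega\) for the orthogonal part.
\end{itemize}
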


Our second main result confirms the value conjectured in
\cite{borza2022} for the curvature exponent of the
$\alpha$-Grushin plane.
\begin{theorem}\label{alpha Grushin MCP Theorem}
    Let $\alpha\geq 1$ and define
    \begin{equation}
        \label{eq:Nalpha-definition}
        N_\alpha:=
        1+\max_{L>1}
        \frac{(2\alpha+1)L}{(L-1)^{2\alpha+1}+1}.
    \end{equation}
    The $\alpha$-Grushin plane $(\mathbb{G}^2_\alpha,d_{CC},\mathcal{L}^2)$ satisfies $\operatorname{MCP}(K,N)$ if and only if $K\leq 0$ and $N\geq N_\alpha$.
\end{theorem}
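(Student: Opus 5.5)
The plan follows the strategy used for the classical Grushin plane in \cite{BarilariRizzi2019}, combined with the explicit geodesics of \cite{borza2022}. First, the reduction to $K=0$. The $\alpha$-Grushin plane has the dilations $\delta_\lambda(x,y)=(\lambda x,\lambda^{\alpha+1}y)$, which satisfy $d_{CC}(\delta_\lambda p,\delta_\lambda q)=\lambda d_{CC}(p,q)$ and $\mathcal L^2\circ\delta_\lambda=\lambda^{\alpha+2}\mathcal L^2$. Blowing up therefore shows that $\operatorname{MCP}(K,N)$ with $K>0$ is impossible. For the same reason $\operatorname{MCP}(K,N)$ with $K<0$ is equivalent to $\operatorname{MCP}(0,N)$, as discussed in \cref{sec:preliminaries}. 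Since the space is essentially non-branching with negligible cut loci, $\operatorname{MCP}(0,N)$ reduces to a pointwise Jacobian inequality: for every centre $q$ and $\mathcal L^2$-a.e.\ $p$, the map $\phi_t(p)=$ the $t$-intermediate point on the unique geodesic from $q$ to $p$ satisfies
\[
    \mathcal J(t,p):=\bigl|\det D\phi_t(p)\bigr|\ \geq\ t^{N},\qquad t\in[0,1].
\]
Equivalently, $t\mapsto \log\mathcal J(t,p)-N\log t$ must stay nonnegative.

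Second, I will compute $\mathcal J$ explicitly. Using the Hamiltonian $H=\tfrac12(p_x^2+|x|^{2\alpha}p_y^2)$, normal geodesics from $q=(x_0,y_0)$ with $p_y=\lambda$ and $p_x=\cos\theta$ are governed by $\ddot x=-\alpha\lambda^2|x|^{2\alpha-2}x$. This is a nonlinear oscillator whose solutions are generalized trigonometric functions, with $y$ obtained by quadrature. I will write the exponential map in coordinates $(t,\theta,\lambda)$. Because of translation invariance in $y$, its Jacobian factors as $t$ times a $2\times 2$ determinant in $(\theta,\lambda)$. The ratio $\mathcal J(t,p)=\det D\exp_q(t\cdot)/\det D\exp_q(\cdot)$ is then rescaled, using $\delta_\lambda$, to a function of two variables: the normalized time $s$ along the half-oscillation, up to the first conjugate/cut time, and the normalized starting height $x_0|\lambda|^{1/(\alpha+1)}$.

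Third, the sharp exponent. The decisive family consists of geodesics starting on or near the singular line $\{x=0\}$, for which $x(t)$ exits the line and returns. There the determinant is governed by the behaviour of $x(t)\sim t$ and $y(t)\sim t^{2\alpha+2}$ near $s=0$, together with cancellations near the cut time. I expect the worst ratio $\log\mathcal J/\log t$ to be expressible, after a change of variables $L$ encoding where the geodesic sits relative to its first return, as $1+\frac{(2\alpha+1)L}{(L-1)^{2\alpha+1}+1}$. Then:
\begin{itemize}
    \item Necessity of $N\geq N_\alpha$ follows by taking geodesics realizing the maximizing $L$ and letting $t\to0$ along suitable scalings. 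For $\alpha=1$ this recovers $5$.
    \item Sufficiency requires showing $\mathcal J(t,p)\geq t^{N_\alpha}$ for all $(s,x_0)$. I would do this by proving that $t\mapsto t\,\partial_t\log\mathcal J$ is bounded by $N_\alpha$, reducing it via the dilation to a one-parameter inequality.
\end{itemize}

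The main obstacle is sufficiency for geodesics starting off the singular set, $x_0\neq0$. There the generalized trigonometric functions do not simplify, and one must show that the singular-start family is the extremal one. I would first try a monotonicity argument in the starting height: show that $\partial_{x_0}$ of the logarithmic derivative has a sign by comparing with the $x_0=0$ case via a Sturm-type comparison for the Jacobi equation. This would reduce everything to the explicit one-variable maximization over $L>1$ defining $N_\alpha$.
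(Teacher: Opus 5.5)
Your reduction matches the paper's: after the dilation argument and the Jacobian criterion, the task is to prove $t\,\partial_t\log\beta_t=1+g(t\omega,\phi)\le N$ with $g=z\,\partial_zG/G$. The gap is that you have misidentified the extremal configuration, so the sufficiency strategy built on it cannot work. Geodesics from the singular line are \emph{not} extremal. For $\phi=0$ one has $\partial_zG(z,0)=\alpha z\sin_\alpha(z)^{2\alpha-1}$ on $(0,\pi_\alpha)$. Since $\sin_\alpha$ is concave there, $\sin_\alpha(\sigma)/\sigma$ is nonincreasing, which gives $g(z,0)=\frac{z\,\partial_zG(z,0)}{\int_0^z\partial_zG(\sigma,0)\,d\sigma}\le 2\alpha+1$. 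So the singular-start family only forces $N\ge 2\alpha+2$, which is $4$ rather than $5$ when $\alpha=1$, whereas $N_\alpha\ge 1+\tfrac43(2\alpha+1)>2\alpha+2$. Consequently the monotonicity in the starting height that you hope to get from a Sturm comparison is false: if it held, it would prove that the classical Grushin plane satisfies $\operatorname{MCP}(0,4)$, contradicting the sharp exponent $5$. Your necessity sketch has the same problem. Letting $t\to0$ only sees $\beta_t\sim c\,t^2$ from Riemannian points and $\beta_t\sim c\,t^{2\alpha+2}$ from singular ones. The sharp constraint comes from the slope at $t=1$: since $\beta_1=1$, the bound $\beta_t\ge t^N$ forces $N\ge\partial_t\beta_t|_{t=1}=1+g(\omega,\phi)$. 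Cancellations near the cut time play no role.

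What is missing is where the supremum of $g$ actually lives, and a way to bound $g$ without locating a maximizer. The value $M_\alpha$ is never attained. It is approached only in the degenerate corner $(z,\phi)\to(0,0)$, that is, by short geodesics that start at a Riemannian point close to $\{x=0\}$ and \emph{cross} it. A blow-up gives $g=\Phi(z,\phi)+O(\lVert(z,\phi)\rVert^{2\alpha})$, where $\Phi(z,\phi)=|z+\phi|^{2\alpha}/\int_0^1|\phi+\tau z|^{2\alpha}\,d\tau=(2\alpha+1)/p_\alpha\bigl(\phi/(z+\phi)\bigr)$. This is maximized at the negative ratio $t_\alpha\in(-1,-1/2]$. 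Here $L=1-t_\alpha$ is the ratio of the total phase $r+s$ to the phase $s$ after the crossing, not a position relative to a return to the singular line.

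Since there is no maximizer to find, the paper argues by contradiction. As $z\to0$, $g(z,\phi)$ tends to $1$ or $2\alpha+1$, both below $M_\alpha$, so a violation produces a first contact with the level $M_\alpha$ at which $z\,\partial_zg\ge0$. The Riccati equation $\partial_zg=\alpha z|S|^{2\alpha-2}+(1/z+2\alpha C/S)g-g^2/z$ then forces sign relations. These place $\phi$ and $z+\phi$ on opposite sides of a zero of $\sin_\alpha$, at distances $r,s<\pi_\alpha/2$, and the contact condition becomes $E(r,s)=0$. Two estimates then rule this out: $\tan_\alpha r-r>\tfrac{\alpha}{2\alpha+1}r^{2\alpha+1}$, and the deficit bound obtained from $\mathcal H_\alpha\ge0$. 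With $r=(L-1)s$, they reduce $E>0$ exactly to $\frac{(L-1)^{2\alpha+1}+1}{2\alpha+1}\ge\frac{L}{M_\alpha}$, which is the variational definition of $M_\alpha$. Nothing in your plan plays the role of this first-contact argument or of these two estimates.
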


The value of \(N_\alpha\) in \cref{eq:Nalpha-definition} is exactly
that conjectured in \cite{borza2022}, see
\cref{subsec:equivalent-curvature-exponent}. The dependence of the curvature exponent on \(\alpha\) is illustrated in
\cref{fig:alpha-curvature-exponent}.
\begin{figure}[H]
    \centering
    \begin{tikzpicture}
        \begin{axis}[
                width=0.78\textwidth,
                height=0.43\textwidth,
                xmin=1,
                xmax=3.45,
                ymin=5,
                ymax=14,
                axis lines=left,
                axis line style={->,line width=0.8pt},
                xlabel={\(\alpha\)},
                ylabel={\(N_\alpha\)},
                xtick={1,1.5,2,2.5,3},
                ytick={5,6,8,10,12,14},
                tick align=outside,
                tick style={black!55},
                ymajorgrids,
                major grid style={black!9},
                clip=false,
                every axis x label/.style={
                        at={(current axis.right of origin)},
                        anchor=north west
                    },
                every axis y label/.style={
                        at={(axis description cs:-0.10,0.5)},
                        anchor=center,
                        rotate=0
                    }
            ]
            \addplot[
                MidnightBlue,
                very thick
            ] coordinates {
                    (1.0,5.00000000)
                    (1.2,5.66351569)
                    (1.4,6.33936084)
                    (1.6,7.02557795)
                    (1.8,7.72063670)
                    (2.0,8.42331834)
                    (2.2,9.13263678)
                    (2.4,9.84778310)
                    (2.6,10.56808575)
                    (2.8,11.29298124)
                    (3.0,12.02199229)
                };
            \addplot[
                MidnightBlue,
                very thick,
                dashed
            ] coordinates {
                    (3.0,12.02199229)
                    (3.2,12.75471117)
                    (3.4,13.49078693)
                };
            \addplot[
                MidnightBlue,
                only marks,
                mark=*,
                mark size=1.7pt
            ] coordinates {(1,5)};
        \end{axis}
    \end{tikzpicture}
    \caption{The curvature exponent \(N_\alpha\) of the
        \(\alpha\)-Grushin plane as a function of \(\alpha\geq1\). It obeys
        \(N_\alpha=4\alpha+2-\log(4\alpha+2)+o(1)\) as
        \(\alpha\to\infty\) (see \cref{rem:alpha-curvature-asymptotic}).}
    \label{fig:alpha-curvature-exponent}
\end{figure}

The proofs of both theorems rely on the Jacobian characterization of
the measure contraction property established in
\cite{BarilariRizzi2019}. In either of the two spaces considered above,
fix \(q_0\) and let \(q_1\notin\operatorname{Cut}(q_0)\). Let
\(\lambda_0\) be the initial
covector of the unique minimizing geodesic from \(q_0\) to \(q_1\).
The corresponding distortion coefficient is
\[
    \beta_t(q_0,q_1)
    =
    \frac{J_{q_0}(t;\lambda_0)}
    {J_{q_0}(1;\lambda_0)},
\]
where \(J_{q_0}(t;\lambda_0)\) is the Jacobian determinant of the
exponential map at time \(t\). Thus, determining the curvature exponent
reduces to finding the smallest \(N\) for which
\begin{equation}\label{eq:intro-distortion-bound}
    \beta_t(q_0,q_1)\geq t^N
\end{equation}
holds for every \(t\in[0,1]\), every base point \(q_0\), and every
\(q_1\notin\operatorname{Cut}(q_0)\).
Accordingly, the core analytic arguments in
\cref{sec:radial-grushin-mcp,sec:alpha-grushin-mcp} establish
\cref{eq:intro-distortion-bound} for the sharp values of \(N\) stated in
\cref{Radial Grushin MCP Theorem,alpha Grushin MCP Theorem},
respectively. For the \(\alpha\)-Grushin plane, we introduce a new
method combining a first-contact argument for the logarithmic
derivative of the Jacobian with a blow-up analysis. The method
establishes the sharp lower bound for the curvature exponent.
Techniques capable of doing so are rare in sub-Riemannian geometry, and
we hope that this one can be adapted to other settings. When
\(\alpha=1\), it also gives a new and more natural proof of the result
in \cite{BarilariRizzi2019} that the curvature exponent of the classical
Grushin plane is \(5\).

We close this introduction with two important observations on the study of the
measure contraction property in sub-Riemannian geometry. First, the curvature exponent \(N_\alpha\) is irrational for every integer
\(\alpha\geq2\) (see \cref{rem:alpha-curvature-irrationality}); for example, \(N_2\approx 8.4233\). Since for \(\alpha \in \mathbb{N}\) the
\(\alpha\)-Grushin plane is generated by the real-analytic
bracket-generating family \(\{\partial_x,x^\alpha\partial_y\}\), these planes therefore provide the very first examples of real-analytic sub-Riemannian structures with a finite, noninteger curvature exponent. Noninteger
curvature exponents were previously known in sub-Finsler geometry
\cite{BorzaMagnaboscoRossiTashiro2025CurvatureExponent} but never observed in sub-Riemannian structures. Second, it is
conjectured in \cite[Conjecture~18]{Zhang2025} that a Carnot group with noninteger curvature
exponent should exist. Since the classical Grushin plane is a quotient of the Heisenberg group
and both spaces have curvature exponent \(5\), it is natural to ask
whether the Carnot group admitting \(\mathbb G_\alpha^2\) as a quotient
might likewise have curvature exponent \(N_\alpha\) and provide
the example predicted by the conjecture. For integer
\(\alpha\), the space \(\mathbb G_\alpha^2\) is a quotient of the model filiform
Carnot group of step \(\alpha+1\)
\cite[Section~3.1]{Ottazzi2025}. For \(\alpha\geq2\), however, the filiform group upstairs fails every
measure contraction property \cite[Corollary~1.10]{borzaMCPfailure}. This natural
quotient construction cannot produce a Carnot group with finite
noninteger curvature exponent.

\begin{ack*}
    S.B.'s research was supported by \textbf{SUBLOR}, a research project funded by the European Union under the Horizon Europe programme's Marie Skłodowska-Curie Actions Postdoctoral Fellowships, grant agreement No.\@ \href{https://cordis.europa.eu/project/id/101282277}{101282277}.

    \textbf{AI disclosure.} Several key steps in the proof of \cref{alpha Grushin MCP Theorem} were suggested by OpenAI's GPT-5.6 Thinking model which was accessed during May and June of 2026. The authors take full responsibility for the accuracy of all arguments presented in the paper.

    \textbf{Data availability.} No datasets were generated or analysed during this study.
\end{ack*}

\section{Preliminaries on sub-Riemannian geometry}\label{sec:preliminaries}

We briefly review important notions in sub-Riemannian geometry. For more details, we refer the reader to \cite{AgrachevBarilariBoscain2012}. Let \(M\) be a smooth connected manifold of dimension \(d\), and let
\(X_1,\ldots,X_k\) be globally defined locally Lipschitz fields on \(M\).  Denote by $\mathcal{F}:\mathbb{R}^k\times M\to TM$
the map $\mathcal{F}(u,q)=\sum_{j=1}^{k}u_jX_j(q)$,
and write \(\mathcal{F}_q=\mathcal{F}(\cdot,q)\). An absolutely continuous curve
\(\gamma:[0,T]\rightarrow M\), where absolute continuity is understood
in local coordinates, is \emph{admissible} if there exists a control
\(\mathbf{u}\in L^1([0,T],\mathbb{R}^k)\) such that
\begin{align*}
    \dot{\gamma}(t)
    =
    \mathcal{F}(\mathbf{u}(t),\gamma(t))
    =
    \sum_{j=1}^k\mathbf{u}_j(t)X_j(\gamma(t)),
    \qquad \text{for a.e. }t\in[0,T].
\end{align*}
In the appendix in
\cite[Chapter~3]{AgrachevBarilariBoscainBook2020}, it is proven that the pointwise
minimal control
\begin{align*}
    \mathbf{u}^*(t)
    :=
    \operatorname*{argmin}_{
        u\in
        \mathcal{F}_{\gamma(t)}^{-1}(\dot{\gamma}(t))
    }
    \lvert u\rvert^2
\end{align*}
is defined for almost every \(t\in[0,T]\) and belongs to
\(L^1([0,T],\mathbb{R}^k)\). Furthermore, every nonconstant admissible curve admits an arc-length reparametrization. For this parametrization, its minimal control satisfies \(\lvert\mathbf u^*(t)\rvert=1\) almost everywhere and therefore belongs to \(L^\infty\). The \emph{length} of an admissible curve
\(\gamma:[0,T]\rightarrow M\) is $\ell(\gamma)
    :=
    \lVert\mathbf{u}^*\rVert_{L^1([0,T],\mathbb{R}^k)}$.     The \emph{Carnot--Carath\'eodory distance} is
\begin{align*}
    d_{CC}(x,y)
    =
    \inf\left\{
    \ell(\gamma):
    \begin{array}{l}
        T>0,\ \gamma:[0,T]\rightarrow M\text{ admissible},\
        \gamma(0)=x,\ \gamma(T)=y
    \end{array}
    \right\}.
\end{align*}
When the generating fields are smooth, they satisfy the
\emph{bracket-generating condition}, also called the \emph{H\"ormander
    condition}, if
\begin{equation*}
    \operatorname{span}\bigl\{Y(q):
    Y\in\operatorname{Lie}(X_1,\ldots,X_k)\bigr\}
    =T_qM
    \qquad\text{for every }q\in M,
\end{equation*}
where \(\operatorname{Lie}(X_1,\ldots,X_k)\) is the Lie algebra generated
by the fields and their iterated Lie brackets. By the Chow--Rashevskii theorem, this condition ensures that \(d_{CC}\) is finite and induces the
manifold topology. An admissible curve whose length equals \(d_{CC}(x,y)\) is called a
\emph{length minimizer}. An admissible curve is a \emph{geodesic} if
it is locally length minimizing. We call the
sub-Riemannian manifold $(M,d_{CC})$ geodesic if there is a length minimizer connecting all pairs of points in $M$.  When $d_{CC}$ induces the manifold topology, $(M,d_{CC})$ is locally
compact and the metric Hopf--Rinow theorem implies that every
complete sub-Riemannian manifold equipped with its Carnot--Carathéodory distance is proper and geodesic.

For each \(\lambda\in T_q^*M\), define the fiber-linear functions
\(h_j:T^*M\to\mathbb R\) by
$h_j(\lambda):=\langle\lambda,X_j(q)\rangle$ for $j=1,\ldots,k$.
For \(\nu\in\{0,1\}\), define the control Hamiltonian
\(\mathcal H_\nu:T^*M\times\mathbb R^k\to\mathbb R\) by
\begin{equation*}
    \mathcal H_\nu(\lambda,u)
    :=
    \left\langle\lambda,
    \mathcal F(u,\pi(\lambda))
    \right\rangle
    -
    \frac{\nu}{2}\lvert u\rvert^2
    =
    \sum_{j=1}^k u_jh_j(\lambda)
    -
    \frac{\nu}{2}\lvert u\rvert^2,
\end{equation*}
where \(\pi:T^*M\to M\) is the canonical projection. The
Hamiltonian \(H:T^*M\to[0,\infty)\) is
    \begin{equation*}
        H(\lambda)
        :=
        \frac12\sum_{j=1}^k h_j(\lambda)^2.
    \end{equation*}

    For every \(C^1\) function \(f:T^*M\to\mathbb R\), we denote by \(\overrightarrow f\)
    its Hamiltonian vector field, determined by
    \(\sigma(\cdot,\overrightarrow f)=df\), where \(\sigma\) is the canonical symplectic
    form on \(T^*M\). When the generating fields are \(C^1\), the function
    \(\mathcal H_\nu(\cdot,u)\) is \(C^1\) for each fixed
    \(u\in\mathbb R^k\), so \(\overrightarrow{\mathcal H}_\nu(\cdot,u)\) is well defined.
    If \(H\) is \(C^2\), then \(\overrightarrow H\) is \(C^1\) and hence generates a
    locally unique Hamiltonian flow.

    A constant-speed length minimizer has a minimal control of constant norm,
    which therefore belongs to \(L^\infty([0,T],\mathbb R^k)\). We recall the
    Pontryagin maximum principle in the standard form for constant-speed
    sub-Riemannian length minimizers; see
    \cite[Theorem~3.59]{AgrachevBarilariBoscainBook2020}.

    \begin{theorem}[Pontryagin maximum principle]
        Let \(M\) be a smooth manifold equipped with \(C^1\) vector fields
        \(X_1,\ldots,X_k\), and assume that the associated Hamiltonian \(H\) is
        \(C^2\). Let \(\gamma:[0,T]\to M\) be a constant-speed length
        minimizer and let \(\mathbf u^*\) be its minimal control. Then there
        exist an absolutely continuous curve \(\lambda:[0,T]\to T^*M\) and a
        multiplier \(\nu\in\{0,1\}\) such that
        \(\pi(\lambda(t))=\gamma(t)\) and the following conditions hold:
        \begin{enumerate}[label=\normalfont(\roman*)]
            \item \textbf{(Nontriviality)}
                  \((\nu,\lambda(t))\neq(0,0)\) for every \(t\in[0,T]\);
            \item \textbf{(Adjoint equation)} For almost every
                  \(t\in[0,T]\),
                  \begin{equation*}
                      \dot\lambda(t)
                      =
                      \overrightarrow{\mathcal H}_\nu
                      \bigl(\lambda(t),\mathbf u^*(t)\bigr);
                  \end{equation*}
            \item \textbf{(Maximization condition)} For almost every
                  \(t\in[0,T]\),
                  \begin{equation*}
                      \mathcal H_\nu
                      \bigl(\lambda(t),\mathbf u^*(t)\bigr)
                      =
                      \max_{u\in\mathbb R^k}
                      \mathcal H_\nu(\lambda(t),u).
                  \end{equation*}
        \end{enumerate}
        An extremal lift satisfying these conditions with \(\nu=1\) is called
        \emph{normal}, whereas one satisfying them with \(\nu=0\) is called
        \emph{abnormal}.
    \end{theorem}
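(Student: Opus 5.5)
The plan is to reduce the length-minimization problem to a fixed-time optimal control problem with quadratic cost and then run the classical needle-variation argument. First, since \(\gamma\) is a constant-speed length minimizer on \([0,T]\), the Cauchy--Schwarz inequality \(\ell(\eta)^2\leq T\int_0^T\lvert\mathbf u(t)\rvert^2\,dt\) shows that \(\mathbf u^*\) minimizes the energy \(J(\mathbf u)=\frac12\int_0^T\lvert\mathbf u\rvert^2\,dt\). This holds among all \(L^\infty\) controls steering \(\gamma(0)\) to \(\gamma(T)\) in time \(T\), with equality in Cauchy--Schwarz exactly for constant-speed curves. We then introduce the extended state \((q,c)\in M\times\mathbb R\) with dynamics
\[
\dot q=\mathcal F(u,q),\qquad \dot c=\tfrac12\lvert u\rvert^2 .
\]
The extended endpoint map is \(E(\mathbf u)=(q_{\mathbf u}(T),c_{\mathbf u}(T))\). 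Optimality means that no control near \(\mathbf u^*\) reaches \(\gamma(T)\) with a smaller cost, so \(E\) is not locally open at \(\mathbf u^*\) in the direction \((0,-1)\).

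Second, we construct the Pontryagin cone at time \(T\) from needle variations. On a small interval \([\tau-\epsilon,\tau]\) around a Lebesgue point \(\tau\) of \(\mathbf u^*\), replace \(\mathbf u^*\) by a constant \(v\in\mathbb R^k\). Because the fields \(X_j\) are \(C^1\), the flow of the time-dependent field \(\mathcal F(\mathbf u^*(t),\cdot)\) is \(C^1\) in the initial point. The first-order effect of the needle at time \(\tau\) is therefore
\[
\bigl(\mathcal F(v,\gamma(\tau))-\mathcal F(\mathbf u^*(\tau),\gamma(\tau)),\ \tfrac12\lvert v\rvert^2-\tfrac12\lvert\mathbf u^*(\tau)\rvert^2\bigr),
\]
and the linearized flow transports it to time \(T\). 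Taking finitely many needles at distinct times and their nonnegative combinations gives a convex cone \(\mathcal K\). The key step, and the main obstacle, is to show that if \(\mathcal K\) contained the ray \((0,-1)\) in its interior, then the endpoint map would be locally open there and yield a cheaper admissible control. This is done with a Brouwer fixed-point or degree argument applied to the continuous, though only one-sided differentiable, map from the needle parameters to the endpoint. One must check that the remainders are uniformly \(o(\epsilon)\), which uses only the \(C^1\) regularity of the fields and \(\mathbf u^*\in L^\infty\). Granting this, the cone and the ray can be separated: there is a nonzero covector \((\lambda_T,-\nu)\in T^*_{\gamma(T)}M\times\mathbb R\) with \(\nu\geq0\) that is nonpositive on \(\mathcal K\).

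Third, we transport \(\lambda_T\) backwards by the adjoint (dual) of the linearized flow. This defines the absolutely continuous curve \(\lambda(t)\in T^*_{\gamma(t)}M\), and in coordinates it is exactly the adjoint equation \(\dot\lambda=\overrightarrow{\mathcal H}_\nu(\lambda,\mathbf u^*)\). Nonpositivity on each needle direction at each Lebesgue point reads
\[
\mathcal H_\nu(\lambda(\tau),v)\leq\mathcal H_\nu(\lambda(\tau),\mathbf u^*(\tau))\qquad\text{for all } v,
\]
which is the maximization condition for almost every \(\tau\). Since the adjoint flow is a linear isomorphism on fibres, \((\nu,\lambda_T)\neq(0,0)\) propagates to \((\nu,\lambda(t))\neq(0,0)\) for every \(t\). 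Finally, if \(\nu>0\) we rescale the pair \((\nu,\lambda)\) to get \(\nu=1\), which gives the dichotomy \(\nu\in\{0,1\}\).

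The assumption that \(H\) is \(C^2\) is not needed for the existence statement itself. It enters only afterwards, in the normal case: there maximization gives \(\mathbf u^*_j=h_j(\lambda)\), and \(\lambda\) is then the integral curve of the \(C^1\) field \(\overrightarrow H\).
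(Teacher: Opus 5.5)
Your proposal is correct, but it follows a different route from the one behind the statement. The paper does not prove the result; it quotes \cite[Theorem~3.59]{AgrachevBarilariBoscainBook2020}. The proof there exploits that the controls range over all of $\mathbb R^k$. It views the extended endpoint map $\mathbf u\mapsto(E(\mathbf u),J(\mathbf u))$ as a $C^1$ map on $L^\infty([0,T],\mathbb R^k)$, which already holds for $C^1$ fields. If its differential at $\mathbf u^*$ were onto $T_{\gamma(T)}M\times\mathbb R$, the finite-dimensional inverse function theorem along finitely many directions would make the map locally open and produce a cheaper control. Hence there is a nonzero $(\lambda_T,\nu)$ with $\lambda_T\,D_{\mathbf u^*}E=\nu\,d_{\mathbf u^*}J$. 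One then expresses $D_{\mathbf u^*}E$ by the variation-of-constants formula and sets $\lambda(t)=(P_{t,T})^*\lambda_T$, where $P_{t,T}$ is the flow of $\mathcal F(\mathbf u^*(\cdot),\cdot)$; this is the same backward adjoint transport you use. The multiplier identity becomes $h_j(\lambda(t))=\nu\,\mathbf u^*_j(t)$ for a.e.\ $t$. This is the maximization condition because $u\mapsto\mathcal H_\nu(\lambda(t),u)$ is strictly concave for $\nu=1$, and identically zero for $\nu=0$ once $h_j(\lambda(t))=0$.

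Your needle-variation and cone-separation argument replaces the submersion theorem with three ingredients:
\begin{itemize}
\item the Brouwer-type openness lemma for the one-sided differentiable map on the orthant of needle parameters;
\item uniform $o(\epsilon)$ remainders;
\item Lebesgue-point bookkeeping, including the standard stacking of several needles with different $v$ at a common Lebesgue point on adjacent intervals. Your ``distinct times'' phrasing skips this, and without it the cone you describe need not be convex.
\end{itemize}
Your route is heavier. In exchange, it never differentiates with respect to the control, gives the maximization inequality directly rather than through concavity, and would still work with a constrained control set $U\neq\mathbb R^k$. The Lagrange-multiplier route is shorter and identifies $\nu=0$ with $\mathbf u^*$ being a critical point of the endpoint map.
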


    In the normal case, the maximization condition gives
    $u_j^*(t)=h_j(\lambda(t))$ for $1\leq j\leq k$, and the adjoint equation reduces to the \emph{Hamiltonian equation}
    \begin{equation}
        \label{eq:hamiltonequation}
        \dot\lambda=\overrightarrow H(\lambda).
    \end{equation}
    In the abnormal case, it must hold that
    $h_j(\lambda(t))=0$ for every $j=1,\ldots,k$ and almost every
    $t\in[0,T]$. Thus an abnormal lift annihilates the distribution along its
    projected curve.
    The projection of a normal trajectory is always a geodesic. A sub-Riemannian manifold is called \emph{ideal} if it admits no nonconstant abnormal length minimizers.

    For \(q_0\in M\), the normal exponential map at time \(t\) is defined for
    every \(\lambda_0\in T^*_{q_0}M\) such that \(t\) lies in the maximal
    interval of existence of the solution of \cref{eq:hamiltonequation}
    starting at \(\lambda_0\), by
    \begin{align*}
        \mathrm{exp}_{q_0}^{t}(\lambda_0)
        :=
        \pi\left(e^{t\overrightarrow H}(\lambda_0)\right).
    \end{align*}
    When \(t=1\), we omit the superscript and write
    \(\mathrm{exp}_{q_0}\).
    Whenever \(H(\lambda_0)>0\), the curve
    \(t\mapsto\mathrm{exp}_{q_0}^{t}(\lambda_0)\) has constant speed
    \(\sqrt{2H(\lambda_0)}\). Since \(H\) is fiberwise quadratic, the
    exponential map satisfies
    $\mathrm{exp}_{q_0}^{t}(a\lambda_0)
        =
        \mathrm{exp}_{q_0}^{at}(\lambda_0)$ for $a>0$
    whenever both sides are defined.

    The \emph{cut time} of the normal geodesic starting at
    \(q_0\in M\) with initial covector \(\lambda_0\in T^*_{q_0}M\) is
    \begin{equation*}
        t_{\operatorname{cut}}(\lambda_0)
        :=
        \sup\left\{
        T>0:
        t\mapsto\mathrm{exp}_{q_0}(t \lambda_0)
        \text{ is length minimizing on }[0,T]
        \right\}.
    \end{equation*}
    If \(\gamma(t)=\mathrm{exp}_{q_0}(t\lambda_0)\), we also use the
    shorthand
    \(t_{\operatorname{cut}}(\gamma):=t_{\operatorname{cut}}(\lambda_0)\).
    When \(t_{\operatorname{cut}}(\lambda_0)<\infty\) and the exponential
    map is defined at that time, the corresponding \emph{cut point} is $\mathrm{exp}_{q_0}(t_{\operatorname{cut}}(\lambda_0)\lambda_0)$. The \emph{cut locus} of \(q_0\) is
    \[
        \operatorname{Cut}(q_0)
        :=
        \left\{
        \mathrm{exp}_{q_0}(t_{\operatorname{cut}}(\lambda_0)\lambda_0):
        0<t_{\operatorname{cut}}(\lambda_0)<\infty
        \right\}.
    \]
    If \(M\) is a complete and ideal sub-Riemannian manifold, then every
    point in \(M\setminus\operatorname{Cut}(q_0)\) is joined to \(q_0\) by a
    unique minimizing normal geodesic. The collection of all arclength geodesics starting from \(q_0\), together with their cut times, is called the \emph{optimal synthesis} from \(q_0\).

    After choosing local coordinates and fixed linear coordinates on
    \(T_{q_0}^*M\), define the signed Jacobian determinant by
    \begin{equation*}
        J_{q_0}(t;\lambda_0)
        :=
        \det D_{\lambda_0}\mathrm{exp}_{q_0}^t.
    \end{equation*}
    A time \(t>0\) is a \emph{conjugate time} along the normal trajectory
    generated by \(\lambda_0\) if and only if \(J_{q_0}(t;\lambda_0)=0\). The point
    \(\mathrm{exp}_{q_0}^t(\lambda_0)\) is then called a \emph{conjugate point}
    to \(q_0\) along this normal trajectory. For every \(t\) before the first conjugate time along the normal trajectory generated by \(\lambda_0\), the map \(\mathrm{exp}_{q_0}^t\) is a local diffeomorphism at \(\lambda_0\), and its Jacobian determinant there is nonzero.

    The \emph{Extended Hadamard Technique} provides a standard method for verifying a candidate cut time and cut locus. We state it for a Riemannian initial point, that is, a point \(q_0\in M\) satisfying \(\mathcal F_{q_0}(\mathbb R^k)=T_{q_0}M\).

    \begin{theorem}[Section~13.4 in
        \cite{AgrachevBarilariBoscainBook2020}]
    \label{Extended Hadamard Technique}
    Let \(M\) be a complete, ideal sub-Riemannian manifold generated by smooth vector fields \(X_1,\ldots,X_k\), and let \(q_0\in M\) be a Riemannian point. Let
    \(\operatorname{Cut}^{\ast}(q_0)\subseteq M\) be a candidate cut locus
    and, for every
    \(\lambda_0\in T^*_{q_0}M\cap H^{-1}(1/2)\), let
    \(t_{\operatorname{cut}}^{\ast}(\lambda_0)\in(0,+\infty]\) be a
candidate cut time.
Assume that $t_{\operatorname{cut}}(\lambda_0)
    \leq
    t_{\operatorname{cut}}^{\ast}(\lambda_0)$ and let \(N\subseteq T^*_{q_0}M\) be the set of covectors for which the
corresponding normal geodesics are conjectured to be minimizing up to
time \(1\):
\begin{equation*}
    N
    :=
    \left\{
    t\lambda_0:
    \lambda_0\in T^*_{q_0}M\cap H^{-1}(1/2),\
    t\in[0,t_{\operatorname{cut}}^{\ast}(\lambda_0))
    \right\}.
\end{equation*}
Assume that \(N\) satisfies the following conditions:
\begin{enumerate}[label=\normalfont(\roman*)]
    \item
          \(\mathrm{exp}_{q_0}(N)
          =
          M\setminus\operatorname{Cut}^{\ast}(q_0)\);

    \item
          the restriction
          \(\mathrm{exp}_{q_0}|_N:
          N\longrightarrow
          M\setminus\operatorname{Cut}^{\ast}(q_0)\)
          is proper and has invertible differential at every point of
          \(N\);

    \item
          \(\mathrm{exp}_{q_0}(N)\) is simply connected.
\end{enumerate}
Then \(\mathrm{exp}_{q_0}|_N\) is a diffeomorphism onto
\(M\setminus\operatorname{Cut}^{\ast}(q_0)\), and the candidate cut
times and cut locus are exact:
\begin{equation*}
    t_{\operatorname{cut}}(\lambda_0)
    =
    t_{\operatorname{cut}}^{\ast}(\lambda_0),
    \qquad
    \operatorname{Cut}(q_0)
    =
    \operatorname{Cut}^{\ast}(q_0).
\end{equation*}
\end{theorem}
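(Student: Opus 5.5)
The plan is to show that \(\mathrm{exp}_{q_0}|_N\) is a covering map, upgrade it to a diffeomorphism using simple connectedness, and then use the continuous inverse to identify the normal covector of every minimizer. All of this rests on completeness and idealness of \(M\).

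\emph{Step 1: diffeomorphism.} By (ii), \(\mathrm{exp}_{q_0}|_N\) is a local diffeomorphism and proper onto the manifold \(M\setminus\operatorname{Cut}^{\ast}(q_0)\). A proper local diffeomorphism between manifolds is a finite-sheeted covering map onto each connected component it meets, and by (i) it is surjective.

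The domain \(N\) is star-shaped with respect to \(0\in T^*_{q_0}M\), since \(t\lambda_0\in N\) implies \(s\lambda_0\in N\) for \(s\le t\). Hence \(N\) is connected. It is also open, because \(q_0\) is Riemannian and \(t^\ast_{\operatorname{cut}}\) is implicitly lower semicontinuous; I take this as part of the hypothesis that (ii) makes sense. A covering of a simply connected space (by (iii)) with connected total space is a homeomorphism. Therefore \(\mathrm{exp}_{q_0}|_N\) is a diffeomorphism, and we write
\[
\Phi:=(\mathrm{exp}_{q_0}|_N)^{-1}:M\setminus\operatorname{Cut}^\ast(q_0)\to N,
\]
which is continuous.

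\emph{Step 2: minimizers have covectors in \(N\).} Let \(q\in M\setminus\operatorname{Cut}^\ast(q_0)\). Since \(M\) is complete it is geodesic, and since it is ideal every minimizer from \(q_0\) to \(q\) is normal, say \(q=\mathrm{exp}_{q_0}(\mu)\) with \(t\mapsto\mathrm{exp}_{q_0}(t\mu)\) minimizing on \([0,1]\). Writing \(\mu=s\hat\mu\) with \(H(\hat\mu)=1/2\), we get
\[
s\le t_{\operatorname{cut}}(\hat\mu)\le t^\ast_{\operatorname{cut}}(\hat\mu).
\]
So \(r\mu\in N\) for all \(r<1\). The points \(\mathrm{exp}_{q_0}(r\mu)\) lie in \(\mathrm{exp}_{q_0}(N)=M\setminus\operatorname{Cut}^\ast(q_0)\), and injectivity gives \(\Phi(\mathrm{exp}_{q_0}(r\mu))=r\mu\). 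Letting \(r\to1\) and using continuity of \(\Phi\) at \(q\) yields \(\Phi(q)=\mu\), so \(\mu\in N\).

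Now take \(\lambda_0\) with \(H(\lambda_0)=1/2\) and \(s<t^\ast_{\operatorname{cut}}(\lambda_0)\). Apply the above to \(q=\mathrm{exp}_{q_0}(s\lambda_0)\). Any minimizer to \(q\) has covector \(\Phi(q)=s\lambda_0\), so \(t\mapsto\mathrm{exp}_{q_0}(t\lambda_0)\) is minimizing on \([0,s]\). Hence \(t_{\operatorname{cut}}(\lambda_0)\ge t^\ast_{\operatorname{cut}}(\lambda_0)\), and with the assumed reverse inequality the cut times coincide.

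\emph{Step 3: cut loci.} For \(p\in\operatorname{Cut}^\ast(q_0)\), take a minimizer \(\mu=s\hat\mu\) to \(p\), which is normal by idealness. Then \(\mu\notin N\), since \(\mathrm{exp}_{q_0}(N)\) avoids \(\operatorname{Cut}^\ast(q_0)\). Combined with \(s\le t_{\operatorname{cut}}(\hat\mu)=t^\ast_{\operatorname{cut}}(\hat\mu)\), this forces \(s=t_{\operatorname{cut}}(\hat\mu)\), so \(p\in\operatorname{Cut}(q_0)\).

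Conversely, let \(p=\mathrm{exp}_{q_0}(t_{\operatorname{cut}}(\lambda_0)\lambda_0)\) be a cut point. By closedness of the minimizing property the geodesic is minimizing on \([0,t_{\operatorname{cut}}(\lambda_0)]\). If \(p\notin\operatorname{Cut}^\ast(q_0)\), the limiting argument of Step 2 gives \(\Phi(p)=t_{\operatorname{cut}}(\lambda_0)\lambda_0\notin N\), a contradiction. So \(p\in\operatorname{Cut}^\ast(q_0)\).

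The main obstacle is Step 1, specifically the point-set details. We need \(N\) to be an open subset of \(T^*_{q_0}M\) (equivalently, the candidate cut time to be lower semicontinuous) so that the covering-space theorem applies. We also need properness to give closedness of the image in each component. I would first try to extract openness directly from hypothesis (ii), as is implicit in the cited statement.
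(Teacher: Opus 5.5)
Your proof is correct. The paper gives no proof of its own; it quotes the result from Section~13.4 of \cite{AgrachevBarilariBoscainBook2020}, and your argument is the standard one behind it: Hadamard's global inverse theorem via covering theory, then completeness, idealness and continuity of \((\mathrm{exp}_{q_0}|_N)^{-1}\) to show that the covector of every minimizer to a point of \(M\setminus\operatorname{Cut}^\ast(q_0)\) lies in \(N\). You are right to treat openness of \(N\) (equivalently, lower semicontinuity of \(t^\ast_{\operatorname{cut}}\)) as a tacit hypothesis, since (ii) and the conclusion presuppose it, and it holds in the paper's application, where \(\mathcal N=\{(u,w):|w|<\pi\}\).
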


We conclude this section with a brief discussion of the measure contraction property. A full formulation of synthetic curvature-dimension conditions requires the language of optimal transport; however, in the settings considered here, we can work directly with \emph{distortion coefficients}.

\begin{definition}
    Let \((\mathcal X,d,\mathfrak m)\) be a geodesic metric measure space
    with \(\mathfrak m\) a Radon measure. For \(x,y\in\mathcal X\) and
    \(t\in[0,1]\), define
    \begin{align*}
        \beta_t(x,y)
        :=
        \limsup_{r\to0^+}
        \frac{\mathfrak m\bigl(Z_t(x,B(y,r))\bigr)}
        {\mathfrak m\bigl(B(y,r)\bigr)},
    \end{align*}
    where \(Z_t(A,B)\) denotes the set of \(t\)-intermediate points of
    minimizing geodesics joining \(A\) to \(B\); explicitly,
    \begin{align*}
        Z_t(A,B)
        :=
        \left\{
        \gamma(t):
        \gamma(0)\in A,\
        \gamma(1)\in B,\
        \gamma\text{ is minimizing on }[0,1]
        \right\}.
    \end{align*}
\end{definition}

Let \(M\) be a smooth manifold equipped with a complete, ideal
sub-Riemannian structure generated by \(C^1\) vector fields
\(X_1,\ldots,X_k\), and suppose that the associated Hamiltonian \(H\)
is \(C^2\). Let \(\mathfrak m\) be a smooth positive measure on \(M\).
If \(q_1\notin\operatorname{Cut}(q_0)\), let
\(\lambda_0\in T_{q_0}^*M\) be the initial covector of the unique
minimizing normal geodesic from \(q_0\) to \(q_1\), parametrized on
\([0,1]\). Writing \(d\mathfrak m(q)=\rho(q)\,dq\) in local coordinates,
the distortion coefficient is given by
\begin{align*}
    \beta_t(q_0,q_1)
    =
    \frac{
        \rho\bigl(\mathrm{exp}_{q_0}^t(\lambda_0)\bigr)
        |J_{q_0}(t;\lambda_0)|
    }{
        \rho(q_1)
        |J_{q_0}(1;\lambda_0)|
    }.
\end{align*}
This identity follows from
\cite[Lemma~44 and equation~(37)]{BarilariRizzi2019}.
For \(0<t\leq1\), since there are no conjugate times before the cut time,
\(J_{q_0}(t;\lambda_0)\) and \(J_{q_0}(1;\lambda_0)\) have the same sign.
In the spaces considered here, \(\mathfrak m\) is Lebesgue measure, so
\(\rho\equiv1\) and the formula reduces to
\begin{align*}
    \beta_t(q_0,q_1)
    =
    \frac{J_{q_0}(t;\lambda_0)}
    {J_{q_0}(1;\lambda_0)}.
\end{align*}

\begin{theorem}[{\cite[Theorem~9]{BarilariRizzi2019}}]
    \label{MCP theorem Rizzi}
    Let $M$ be a complete, ideal sub-Riemannian manifold with smooth measure $\mathfrak{m}$. The following are equivalent.
    \begin{enumerate}[label=\normalfont(\roman*)]
        \item $\beta_t(q_0,q_1)\geq t^N$ for all $q_0\in M$ and $q_1\notin \operatorname{Cut}(q_0)$ and $t\in [0,1]$.
        \item $(M,d_{CC},\mathfrak{m})$ satisfies $\operatorname{MCP}(0,N)$;
              that is, $\mathfrak{m}(Z_t(q_0,B))\geq
                  t^N\mathfrak{m}(B)$ for every \(q_0\in M\), every Borel
              set \(B\subseteq M\), and every \(t\in[0,1]\).
    \end{enumerate}
\end{theorem}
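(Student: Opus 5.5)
Since \cref{MCP theorem Rizzi} is quoted from \cite{BarilariRizzi2019}, I only sketch how I would prove it. The plan is to compare both conditions through the \(t\)-contraction map toward \(q_0\). Fix \(q_0\) and write \(U:=M\setminus(\operatorname{Cut}(q_0)\cup\{q_0\})\). For \(q\in U\), let \(\lambda_0(q)\) be the initial covector of the unique minimizing normal geodesic from \(q_0\) to \(q\) on \([0,1]\). Define
\[
    \Phi_t(q):=\mathrm{exp}_{q_0}^t\bigl(\lambda_0(q)\bigr).
\]

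\textbf{Step 1: facts about \(\Phi_t\).} For a complete, ideal structure I would first establish three properties.
\begin{enumerate}[label=\normalfont(\alph*)]
    \item \(\operatorname{Cut}(q_0)\) is closed and \(\mathfrak m\)-negligible.
    \item \(\mathrm{exp}_{q_0}\) is a diffeomorphism from an open set of covectors onto \(U\). This holds because there are no conjugate times up to the cut time, and points off the cut locus have a unique minimizer. Hence \(q\mapsto\lambda_0(q)\) is smooth and \(\Phi_t\) is smooth on \(U\).
    \item \(\Phi_t\) is injective for \(t\in(0,1]\). Two distinct minimizers from \(q_0\) cannot share an interior point: otherwise concatenation would produce a second minimizer to a point beyond it, contradicting uniqueness and normality of minimizers in the ideal case.
\end{enumerate}
With \(d\mathfrak m=\rho\,dq\), the chain rule gives \(D\Phi_t=D\mathrm{exp}^t_{q_0}\circ(D\mathrm{exp}_{q_0})^{-1}\). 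Therefore the Jacobian of \(\Phi_t\) with respect to \(\mathfrak m\) is
\[
    \frac{\rho(\Phi_t(q))\,|J_{q_0}(t;\lambda_0)|}{\rho(q)\,|J_{q_0}(1;\lambda_0)|}=\beta_t(q_0,q),
\]
which is exactly the formula recalled before the theorem.

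\textbf{Step 2: (i)\(\Rightarrow\)(ii).} Let \(B\) be Borel and \(t\in(0,1]\); the case \(t=0\) is trivial. Since \(\Phi_t(B\cap U)\subseteq Z_t(q_0,B)\), injectivity and the area formula give
\[
    \mathfrak m\bigl(Z_t(q_0,B)\bigr)\geq\int_{B\cap U}\beta_t(q_0,q)\,d\mathfrak m(q)\geq t^N\,\mathfrak m(B\cap U)=t^N\,\mathfrak m(B),
\]
where the last equality uses that \(\operatorname{Cut}(q_0)\cup\{q_0\}\) is negligible.

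\textbf{Step 3: (ii)\(\Rightarrow\)(i).} Take \(q_1\notin\operatorname{Cut}(q_0)\) with \(q_1\neq q_0\); the case \(q_1=q_0\) is degenerate and handled by convention. Since the cut locus is closed, \(B(q_1,r)\subseteq U\) for small \(r\). Every minimizer from \(q_0\) to a point of \(U\) is the unique one, so \(Z_t(q_0,B(q_1,r))=\Phi_t(B(q_1,r))\). Hence
\[
    \frac{\mathfrak m\bigl(Z_t(q_0,B(q_1,r))\bigr)}{\mathfrak m\bigl(B(q_1,r)\bigr)}=\frac{1}{\mathfrak m\bigl(B(q_1,r)\bigr)}\int_{B(q_1,r)}\beta_t(q_0,q)\,d\mathfrak m(q).
\]
The integrand is continuous, so the ratio tends to \(\beta_t(q_0,q_1)\) as \(r\to0\). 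Applying (ii) to \(B=B(q_1,r)\) bounds the ratio below by \(t^N\), and letting \(r\to0\) gives \(\beta_t(q_0,q_1)\geq t^N\).

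The main obstacle is Step 1: closedness and negligibility of the cut locus, and smoothness and injectivity of \(\Phi_t\), in a possibly rank-varying ideal structure. I would first try the standard argument that, in an ideal structure, \(M\setminus\operatorname{Cut}(q_0)\) is open and dense with full measure and is the diffeomorphic image of the interior of the set of minimizing covectors. This is where the absence of abnormal minimizers is used.
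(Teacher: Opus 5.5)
Your proposal is correct and follows essentially the same route as the paper's own version of this equivalence, \cref{lem:jacobian-mcp} in the appendix. Both use the contraction map $\Phi_t=\mathrm{exp}_{q_0}^t\circ(\mathrm{exp}_{q_0})^{-1}$ on the injectivity domain, the identity $Z_t(q_0,B(q_1,r))=\Phi_t(B(q_1,r))$ with change of variables and $r\to0$, and the integral lower bound over $B\cap U$ using that $M\setminus U$ is null. The paper assumes your Step~1 facts as hypotheses, gets injectivity of $\Phi_t$ for free from $t\operatorname{Inj}^*(q_0)\subseteq\operatorname{Inj}^*(q_0)$, and notes that (ii)$\Rightarrow$(i) is immediate from the $\limsup$ definition of $\beta_t$.

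One small correction: with the paper's definition, $\operatorname{Cut}(q_0)$ itself need not be closed. For the radial Grushin space with $x_0=0$ it accumulates at $q_0$. What you should assert and use in Step~3 is that $\operatorname{Cut}(q_0)\cup\{q_0\}$ is closed, i.e.\ that $U$ is open.
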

To apply this equivalence to the \(\alpha\)-Grushin plane for arbitrary
real \(\alpha\geq1\), we use the lower-regularity Jacobian criterion in
\cref{lem:jacobian-mcp}, whose hypotheses are verified in
\cref{app:first}.

By the MCP version of the Bonnet--Myers
theorem, a metric measure space satisfying \(\operatorname{MCP}(K,N)\)
with \(K>0\) must be compact. Moreover, \(\operatorname{MCP}(0,N)\) implies
\(\operatorname{MCP}(K,N)\) for every \(K\leq0\), see
\cite[Theorem~4.3]{ohta2007}. If a metric measure space \((\mathcal X,d,\mathfrak m)\) admits global bijective dilations \(\delta_\varepsilon\), for arbitrarily small \(\varepsilon>0\), such that
\[
    d(\delta_\varepsilon x,\delta_\varepsilon y)=\varepsilon d(x,y),
    \qquad
    \mathfrak m(\delta_\varepsilon A)=\varepsilon^Q\mathfrak m(A)
\]for some \(Q>0\) and every measurable \(A\subseteq\mathcal X\), then \(\operatorname{MCP}(K,N)\) for \(K<0\) implies, after rescaling by \(\delta_\varepsilon\), \(\operatorname{MCP}(\varepsilon^2K,N)\). Letting
\(\varepsilon\to0\) yields \(\operatorname{MCP}(0,N)\) (see also \cite[Step~3 in the proof of Theorem~3]{BarilariRizzi2018}). Thus, for an unbounded metric measure space admitting such dilations, it is enough to consider \(\operatorname{MCP}(0,N)\), and the smallest \(N\geq1\) for which it holds is called the \emph{curvature exponent} of \((\mathcal X,d,\mathfrak m)\). This applies to both spaces considered here, whose dilations are \((x,y)\mapsto(\varepsilon x,\varepsilon^2y)\) and \((x,y)\mapsto(\varepsilon x,\varepsilon^{1+\alpha}y)\) for the radial Grushin space and the \(\alpha\)-Grushin plane, respectively.

\section{The measure contraction property of the radial Grushin spaces}
\label{sec:radial-grushin-mcp}

In this section, we study the sub-Riemannian structure
\(\mathbb{G}^{n+m}\) on \(\mathbb{R}^{n+m}\), generated by the vector fields
\begin{equation*}
    X_i = \partial_{x_i}
    \qquad (1\leq i\leq n), \qquad
    Y_j = |x|\partial_{y_j}
    \qquad (1\leq j\leq m),
\end{equation*}
and equipped with Lebesgue measure.
Although the fields \(Y_j\) are globally Lipschitz but not smooth along
\(\{x=0\}\), the original family and the smooth redundant family
\(X_i=\partial_{x_i}\), \(Y_{ij}=x_i\partial_{y_j}\) define the same
horizontal curves, assign the same length to every horizontal curve,
and induce the same Carnot--Carath\'eodory distance and sub-Riemannian
Hamiltonian; hence they are equivalent presentations of the same
sub-Riemannian structure. Since the latter family is smooth and
bracket-generating, it realizes this structure as a smooth rank-varying
sub-Riemannian structure. It is complete and ideal:
its only abnormal minimizing curves are constant. We refer to the spaces \(\mathbb{G}^{n+m}\) as \emph{radial Grushin spaces}. In this section, we characterize the normal geodesics of $\mathbb{G}^{n+m}$ and determine their cut and conjugate times, compute the associated distortion coefficients, and identify the curvature exponent.

At a point \((x,y)\in\mathbb{G}^{n+m}\), write a covector
\(\lambda\in T^*_{(x,y)}\mathbb{G}^{n+m}\) as $\lambda
    =
    \sum_{i=1}^{n}u_i\,dx_i
    +
    \sum_{j=1}^{m}w_j\,dy_j$,
where \((u,w)\in\mathbb{R}^{n}\times\mathbb{R}^{m}\) are the associated
fiber coordinates. The corresponding sub-Riemannian Hamiltonian is given by
\[
    H(x,y,u,w)
    =
    \frac{1}{2}\left(|u|^2+|x|^2|w|^2\right).
\]
The Hamiltonian equation \cref{eq:hamiltonequation} is then the system
\begin{equation*}
    \dot{x}=u,\qquad
    \dot{u}=-|w|^2x,\qquad
    \dot{y}=|x|^2w,\qquad
    \dot{w}=0.
\end{equation*}
Subject to the initial conditions \(x(0)=x_0\), \(y(0)=y_0\),
\(u(0)=u_0\), and \(w(0)=w_0\), we have \(w(t)\equiv w_0\).
Setting \(\omega=|w_0|\), the solution for \(x\) is
\begin{equation}\label{radial x formula}
    x(t;u_0,w_0)
    =
    \begin{cases}
        \displaystyle
        \frac{\sin(\omega t)}{\omega}u_0
        +\cos(\omega t)x_0,
         & \omega\neq0, \\[6pt]
        x_0+tu_0,
         & \omega=0.
    \end{cases}
\end{equation}
Since \(\dot y=|x|^2w_0\), we may write
\(y(t;u_0,w_0)=y_0+g(t;u_0,\omega)w_0\) when \(\omega\neq0\), while
\(y(t;u_0,0)=y_0\). If $\kappa \geq 0$ is defined so that
\(2\kappa^2=|u_0|^2+\omega^2|x_0|^2\), then
\begin{equation}\label{g formula}
    g(t;u_0,\omega)
    :=
    \frac{\kappa^2t}{\omega^2}
    +\frac{
        \omega^2|x_0|^2-|u_0|^2
    }{2\omega^3}
    \sin(\omega t)\cos(\omega t)
    +\frac{\langle x_0,u_0\rangle}{\omega^2}
    \sin^2(\omega t).
\end{equation}
For fixed \(t>0\), we have
\(\mathrm{exp}_{(x_0,y_0)}^t(u_0,w_0)
=(x(t;u_0,w_0),y(t;u_0,w_0))\). In the Cartesian cotangent
coordinates \((u_0,w_0)\), its differential has the block form
\begin{align*}
    D\mathrm{exp}_{(x_0,y_0)}^t
    =
    \begin{pmatrix}
        D_{u_0}x & D_{w_0}x \\
        D_{u_0}y & D_{w_0}y
    \end{pmatrix}.
\end{align*}
For \(\omega\neq0\), set
\begin{align*}
    h(t;u_0,\omega)
    :=
    \frac{
        \omega t\cos(\omega t)-\sin(\omega t)
    }{\omega^3}u_0
    -
    \frac{t\sin(\omega t)}{\omega}x_0.
\end{align*}
Writing \(I_p\) for the \(p\times p\) identity matrix, direct
differentiation gives
\begin{align*}
    D_{u_0}x
     & =
    \frac{\sin(\omega t)}{\omega}I_n,\qquad
    D_{w_0}x
    =
    hw_0^T, \qquad D_{u_0}y
    =
    w_0(\nabla_{u_0}g)^T,\qquad
    D_{w_0}y=
    gI_m+w_0(\nabla_{w_0}g)^T,
\end{align*}
where
\begin{align*}
    \nabla_{u_0}g
    =
    \left(
    \frac{t}{\omega^2}
    -
    \frac{\sin(\omega t)\cos(\omega t)}{\omega^3}
    \right)u_0
    +
    \frac{\sin^2(\omega t)}{\omega^2}x_0,
    \qquad
    \nabla_{w_0}g
    =
    \frac{\partial_\omega g}{\omega}w_0.
\end{align*}

To compute the Jacobian determinant
\(J_{(x_0,y_0)}(t;u_0,w_0)\), we use two standard
linear-algebraic identities; see \cite{HornJohnsonMatrixAnalysis}.
If \(A\) is invertible and the remaining blocks have compatible
dimensions, the Schur complement formula gives
\begin{equation}\label{Schur}
    \det
    \begin{pmatrix}
        A & B \\
        C & D
    \end{pmatrix}
    =
    \det A\,\det(D-CA^{-1}B).
\end{equation}
Furthermore, if \(A\) is an invertible \(N\times N\) matrix and
\(v,w\in\mathbb{R}^N\), the rank-one determinant formula gives
\begin{equation}\label{Rank-One}
    \det(A+wv^T)
    =
    \bigl(1+\langle v,A^{-1}w\rangle\bigr)\det A.
\end{equation}

Suppose that \(\omega t\notin\pi\mathbb{Z}\). By the Schur complement
formula \eqref{Schur},
\begin{align*}
    J_{(x_0,y_0)}(t;u_0,w_0)
     & =
    \left(
    \frac{\sin(\omega t)}{\omega}
    \right)^n
    \det\left(
    D_{w_0}y
    -
    \frac{\omega}{\sin(\omega t)}
    D_{u_0}yD_{w_0}x
    \right) \\
     & =
    \left(
    \frac{\sin(\omega t)}{\omega}
    \right)^n
    \det\left(
    gI_m
    +
    w_0
    \left(
    \nabla_{w_0}g
    -
    \frac{\omega}{\sin(\omega t)}
    \langle\nabla_{u_0}g,h\rangle w_0
    \right)^T
    \right).
\end{align*}
Applying the rank-one determinant formula \eqref{Rank-One} and using
\(|w_0|=\omega\), we obtain
\begin{align}\label{det pre simplified}
    J_{(x_0,y_0)}(t;u_0,w_0)
    =
    \left(
    \frac{\sin(\omega t)}{\omega}
    \right)^n
    g^{m-1}
    \left(
    g+\omega\partial_\omega g
    -
    \frac{\omega^3}{\sin(\omega t)}
    \langle\nabla_{u_0}g,h\rangle
    \right).
\end{align}

We next simplify the final factor in \cref{det pre simplified}.

\begin{lemma}\label{technical}
    Let \(\omega\neq0\). If
    \(P(t;u_0,\omega)
    :=g+\omega\partial_\omega g
    -\frac{\omega^3}{\sin(\omega t)}
    \langle\nabla_{u_0}g,h\rangle\),
    then
    \begin{align}\label{P simplification}
        P(t;u_0,\omega)
        =
        \frac{t}{\omega^2}
        \left(
        \omega^2
        \left|
        x_0+\frac{t}{2}u_0
        \right|^2
        +
        \left(
        1-\omega t\cot(\omega t)
        -\frac{\omega^2t^2}{4}
        \right)|u_0|^2
        \right).
    \end{align}
    Moreover,
    \begin{align}\label{non-negative factor}
        1-\omega t\cot(\omega t)
        -\frac{\omega^2t^2}{4}
        \geq0,
        \qquad
        0\leq t<\frac{\pi}{\omega}.
    \end{align}
    The inequality is strict for \(0<t<\pi/\omega\).
\end{lemma}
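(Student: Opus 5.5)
Both claims come down to direct computation: the identity \eqref{P simplification} is an algebraic manipulation, and \eqref{non-negative factor} is an elementary inequality for \(\cot\).

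\textbf{Setting up the identity.} Write \(s=\sin(\omega t)\), \(c=\cos(\omega t)\) and \(\kappa^2=(|u_0|^2+\omega^2|x_0|^2)/2\). Expand each of the three terms of \(P\) as a quadratic form in \((x_0,u_0)\), with coefficients on \(|x_0|^2\), \(\langle x_0,u_0\rangle\) and \(|u_0|^2\). For \(g+\omega\partial_\omega g=\partial_\omega(\omega g)\), differentiate \(\omega g\) from \eqref{g formula} in \(\omega\), keeping \(x_0,u_0\) fixed. Note that \(\kappa^2/\omega^2\) also depends on \(\omega\) through \(|x_0|^2\): since \(\kappa^2 t/\omega = t|u_0|^2/(2\omega)+t\omega|x_0|^2/2\), this term is easy to differentiate. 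The inner product \(\langle\nabla_{u_0}g,h\rangle\) is the product of two explicit linear combinations of \(u_0\) and \(x_0\), so it also expands into these three monomials.

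\textbf{Matching coefficients.} Multiply the inner product by \(-\omega^3/s\) and collect the three coefficients. Reduce them with \(s^2+c^2=1\) and the double-angle identities. The result should be \(t|x_0|^2\), \(t^2\langle x_0,u_0\rangle\), and \(\frac{t}{\omega^2}(1-\omega t\cot(\omega t))|u_0|^2\). Completing the square via \(\omega^2|x_0+\tfrac t2u_0|^2=\omega^2|x_0|^2+\omega^2t\langle x_0,u_0\rangle+\tfrac{\omega^2t^2}{4}|u_0|^2\) then gives exactly \eqref{P simplification}.

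This coefficient bookkeeping is the main obstacle. The \(|u_0|^2\) coefficient is the delicate one: terms such as \((\omega tc-s)(\omega t-sc)/s\) have to be combined to produce \(\cot\). I would first check the result in the special case \(x_0=0\) as a sanity check, and then the \(\langle x_0,u_0\rangle\) term, where the \(s^2\) from \(\nabla_{u_0}g\) and the \(-ts/\omega\) from \(h\) should cancel against \(\partial_\omega\) of the \(\sin^2\) term.

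\textbf{The inequality.} Set \(\theta=\omega t\in[0,\pi)\) and \(F(\theta)=1-\theta\cot\theta-\theta^2/4\), with \(F(0)=0\) by continuity. Using the expansion \(\theta\cot\theta=1-\theta^2/3-\theta^4/45-\dots\), whose coefficients after the constant term are all negative (they are given by Bernoulli numbers, all of the same sign), we get
\[
    F(\theta)=\frac{\theta^2}{12}+\frac{\theta^4}{45}+\dots>0
\]
for \(0<\theta<\pi\), which is the radius of convergence. Alternatively, \(F(\theta)=\theta^2/12+\sum_{k\geq2}(\text{positive})\,\theta^{2k}\), which follows from the partial-fraction expansion \(1-\theta\cot\theta=\sum_{k\geq1}\frac{2\theta^2}{k^2\pi^2-\theta^2}\). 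Combined with \(\sum_{k\ge 1}\frac{2}{k^2\pi^2}=\tfrac13>\tfrac14\), this gives strict positivity directly, since each summand is at least \(2\theta^2/(k^2\pi^2)\). This argument is clean and avoids any derivative analysis.
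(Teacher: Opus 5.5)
Your proposal is correct. For the identity you follow the paper's route: expand everything as a quadratic form in \((x_0,u_0)\) and collect coefficients. The coefficients you predict, \(t\), \(t^2\) and \(\frac{t}{\omega^2}\bigl(1-\omega t\cot(\omega t)\bigr)\), are indeed what that computation gives.

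For the inequality your route is genuinely different. The paper uses only real-variable calculus. It shows that \(s-2\sin s\cos s/(1+\cos^2 s)\) has nonnegative derivative, which gives \(2\sin s\cos s\le s(1+\cos^2 s)\) on \([0,\pi)\). It then uses the identity \(2\sin^2 s\,q'(s)=s(1+\cos^2 s)-2\sin s\cos s\) to conclude that \(q(s)=1-s\cot s-s^2/4\) (your \(F\)) is nondecreasing from \(q(0)=0\).

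You instead import the Mittag-Leffler expansion \(1-\theta\cot\theta=\sum_{k\ge1}2\theta^2/(k^2\pi^2-\theta^2)\). Your Bernoulli-number Taylor series is the same fact, since the Taylor coefficients of \(1-\theta\cot\theta\) are \(2\zeta(2k)/\pi^{2k}\). This gives at once the stronger quantitative bound \(q(\theta)>\theta^2/12\) on \((0,\pi)\). The cost is that you rely on a classical but nontrivial identity, whereas the paper's argument is self-contained.

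The paper's route also produces the auxiliary inequality \eqref{second comparison} along the way, which is reused for the positivity of \(B(z)\) in the proof of \cref{Radial Grushin MCP Theorem}. Your expansion recovers it as well. Termwise differentiation gives \(q'(\theta)\ge\sum_{k\ge1}4\theta/(k^2\pi^2)-\theta/2=\theta/6>0\), and by the identity above this is equivalent to \eqref{second comparison}.
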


\begin{proof}
    Substituting the formulas for \(g\), \(\nabla_{u_0}g\), and \(h\)
    into the definition of \(P\) and collecting the coefficients of
    \(|u_0|^2\), \(|x_0|^2\), and \(\langle x_0,u_0\rangle\) gives
    \cref{P simplification}.

    It remains to prove the asserted positivity. With \(q(s):=1-s\cot s-s^2/4\), the function
    \[
        F(s)
        :=
        s-\frac{2\sin s\cos s}{1+\cos^2s}
    \]
    satisfies \(F(0)=0\) and
    \[
        F'(s)
        =
        \frac{(1-\cos^2s)(3-\cos^2s)}
        {(1+\cos^2s)^2}
        \geq0.
    \]
    Consequently,
    \begin{equation}\label{second comparison}
        2\sin s\cos s
        \leq
        s(1+\cos^2s),
        \qquad 0\leq s<\pi.
    \end{equation}
    Since
    \[
        2\sin^2s\,q'(s)
        =
        s(1+\cos^2s)-2\sin s\cos s,
    \]
    we have \(q'(s)\geq0\). As \(q(0)=0\), this proves
    \cref{non-negative factor}; strictness follows in the same way for
    \(s>0\).
\end{proof}

The preceding computations determine the Jacobian for
\(\omega\neq0\). We now include the case \(\omega=0\) and analyze the
conjugate times.

\begin{theorem}[Jacobian and conjugate times]\label{Conjugacy Profile}
    Setting \(\omega=|w_0|\), we have
    \begin{equation}\label{radial Jacobian formula}
        J_{(x_0,y_0)}(t;u_0,w_0)
        =
        \begin{cases}
            \displaystyle
            \left(
            \frac{\sin(\omega t)}{\omega}
            \right)^n
            g(t;u_0,\omega)^{m-1}
            P(t;u_0,\omega),
             & \omega\neq0, \\[8pt]
            \displaystyle
            t^n
            \left(
            t|x_0|^2
            +t^2\langle x_0,u_0\rangle
            +\frac{t^3}{3}|u_0|^2
            \right)^m,
             & \omega=0,
        \end{cases}
    \end{equation}
    where \(P\) is given by \cref{P simplification} and \(g\) by
    \cref{g formula}. When \(\omega\neq0\) and
    \(\omega t\in\pi\mathbb{Z}\), the first expression is understood by
    continuous extension. Moreover, for a nonconstant geodesic
    \(\gamma(t)=\mathrm{exp}_{(x_0,y_0)}^t(u_0,w_0)\), the following
    hold:
    \begin{enumerate}[label=\normalfont(\roman*)]
        \item If \(n\geq2\), \(x_0\neq0\), and \(\omega\neq0\), then
              \(t=\pi/\omega\) is the first conjugate time.

        \item If \(n=1\), \(x_0\neq0\), and \(\omega\neq0\), then
              \(t=\pi/\omega\) is conjugate if and only if \(u_0=0\). In the
              latter case it is the first conjugate time.

        \item If \(\omega=0\), then \(\gamma\) is a line and has no
              conjugate times.
    \end{enumerate}
\end{theorem}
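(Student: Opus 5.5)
The Jacobian formula for \(\omega\neq0\) and \(\omega t\notin\pi\mathbb Z\) is already \cref{det pre simplified} combined with \cref{technical}. Both sides of \cref{radial Jacobian formula} are real-analytic in \((t,u_0,w_0)\) away from \(w_0=0\): \(g\) and \(P\) extend analytically across \(\omega t\in\pi\mathbb Z\), since \(\sin(\omega t)/\omega\) and \(\omega t\cot(\omega t)\cdot\sin(\omega t)\) are entire. The identity therefore persists by continuity.

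For \(\omega=0\), I will not try to take limits in the factored formula, where \(g\) and \(P\) individually have removable singularities but \(w_0\) appears in the derivative blocks. Instead I will differentiate directly at \(w_0=0\). There \(x=x_0+tu_0\) and \(y=y_0\). The derivatives are \(D_{u_0}x=tI_n\), \(D_{u_0}y=0\), and, to first order in \(w_0\), \(y=y_0+\bigl(\int_0^t|x_0+su_0|^2ds\bigr)w_0+O(|w_0|^3)\). This gives \(D_{w_0}y=\bigl(t|x_0|^2+t^2\langle x_0,u_0\rangle+\tfrac{t^3}{3}|u_0|^2\bigr)I_m\). The matrix is block lower triangular, so the determinant is the stated product.

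For the conjugate times, I analyse the sign of each factor on \((0,\pi/\omega]\).
\begin{itemize}
\item \((\sin(\omega t)/\omega)^n\) is positive on \((0,\pi/\omega)\) and vanishes at \(\pi/\omega\).
\item \(g=\int_0^t|x(s)|^2ds>0\) for \(t>0\) unless \(x\equiv0\), which cannot happen for \(x_0\neq0\).
\item \(P\) is a sum of \(\tfrac{t}{\omega^2}\omega^2|x_0+\tfrac t2u_0|^2\ge0\) and a strictly positive multiple of \(|u_0|^2\) by \cref{technical}. If \(u_0=0\) it reduces to \(t|x_0|^2>0\). So \(P>0\) on \((0,\pi/\omega)\).
\end{itemize}
Hence \(J\neq0\) there, and no conjugate time precedes \(\pi/\omega\).

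At \(t=\pi/\omega\) I need the behaviour of \(J\), not just the factor \(\sin^n\). Here \(P\) contains \(-\omega t\cot(\omega t)|u_0|^2\), which blows up like \(|u_0|^2/\sin(\omega t)\). Thus \(\sin^n\cdot P\sim \sin^{n-1}\cdot(\text{const}\,|u_0|^2)\) near \(\pi/\omega\) when \(u_0\neq0\), while \(g^{m-1}\) stays positive.
\begin{itemize}
\item For \(n\ge2\): \(J\to0\) at \(t=\pi/\omega\) in all cases (order \(\ge n-1\ge1\) if \(u_0\neq0\), order \(n\) if \(u_0=0\)). So \(\pi/\omega\) is the first conjugate time.
\item For \(n=1\) and \(u_0\neq0\): the limit of \(J\) is \(g^{m-1}\cdot\bigl(t\cdot(-\omega t\cos(\omega t))|u_0|^2/\omega^2\bigr)/\omega\) evaluated at \(\omega t=\pi\), i.e.\ \(g^{m-1}\pi^2|u_0|^2/\omega^4>0\), so \(\pi/\omega\) is not conjugate.
\item For \(n=1\) and \(u_0=0\): \(J=(\sin(\omega t)/\omega)\,g^{m-1}t|x_0|^2\) vanishes first at \(\pi/\omega\).
\end{itemize}
For (iii), with \(\omega=0\) the geodesic is \(t\mapsto(x_0+tu_0,y_0)\). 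The inner factor equals \(\int_0^t|x_0+su_0|^2ds>0\) for \(t>0\), since the geodesic is nonconstant, so \(u_0\neq0\). Hence \(J>0\) for all \(t>0\).

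The main obstacle is the careful extension at \(\omega t=\pi\): getting the exact leading behaviour of \(\sin^n P\), including the cancellation of the cotangent pole, and confirming the sign and nonvanishing of the limit in the case \(n=1\), \(u_0\neq0\).
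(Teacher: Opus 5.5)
Your proposal is correct and follows essentially the same route as the paper. That route is: continuity across \(\omega t\in\pi\mathbb Z\); direct differentiation at \(w_0=0\); positivity of \(g=\int_0^t|x|^2\) and of \(P\) (via \cref{technical}) on \((0,\pi/\omega)\); and the limit \(\frac{\sin(\omega t)}{\omega}P\to\pi^2|u_0|^2/\omega^4\) at \(t=\pi/\omega\), which separates the cases \(n\geq2\) and \(n=1\). There are two small wording slips. First, \(P\) itself has a pole at \(\omega t=\pi\), so only \(\sin(\omega t)\,P\) extends analytically, as you note later. Second, at \(w_0=0\) we have \(D_{w_0}x=0\) (since \(x\) depends on \(w_0\) only through \(|w_0|^2\)) as well as \(D_{u_0}y=0\), so the matrix there is block diagonal rather than block lower triangular.
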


\begin{proof}
    For \(\omega\neq0\) and \(\omega t\notin\pi\mathbb{Z}\), the formula
    follows from \cref{det pre simplified} and the definition of \(P\);
    its extension to the excluded times follows from the continuity of the
    differential. The formula for \(\omega=0\) follows by direct
    differentiation. Suppose first that \(\omega\neq0\) and
    \(0<t<\pi/\omega\). Then
    \(g(t;u_0,\omega)=\int_0^t|x(s;u_0,w_0)|^2\,ds\), and hence
    \(g>0\) for every nonconstant geodesic. By
    \cref{P simplification} and \cref{technical}, one also has
    \(P>0\). It follows that the Jacobian does not vanish for
    \(0<t<\pi/\omega\), so there are no conjugate times in this interval.

    Multiplying \cref{P simplification} by
    \(\sin(\omega t)/\omega\) and letting
    \(t\uparrow\pi/\omega\) gives
    \begin{equation}\label{endpoint P limit}
        \lim_{t\uparrow\pi/\omega}
        \frac{\sin(\omega t)}{\omega}P(t;u_0,\omega)
        =
        \frac{\pi^2}{\omega^4}|u_0|^2.
    \end{equation}
    Moreover, \cref{g formula} gives
    \(g(\pi/\omega;u_0,\omega)
    =\pi
    (|u_0|^2+\omega^2|x_0|^2)/(2\omega^3)>0\).
    Factoring one power of \(\sin(\omega t)/\omega\) in
    \cref{det pre simplified} and using
    \cref{endpoint P limit}, together with the continuity of the
    differential, shows that its determinant vanishes at
    \(t=\pi/\omega\) when \(n\geq2\). When \(n=1\), it vanishes there if
    and only if \(u_0=0\). Since there are no earlier conjugate times, this
    proves \normalfont(i) and \normalfont(ii).

    Finally, when \(\omega=0\),
    the factor appearing in the corresponding case of
    \cref{radial Jacobian formula} is
    \(\int_0^t|x_0+su_0|^2\,ds\). For \(t>0\), this is positive unless
    \(x_0=u_0=0\), which is the constant case. Thus a nonconstant geodesic
    with \(\omega=0\) has no conjugate times.
\end{proof}

We can now determine the optimal synthesis, that is, characterize the
minimizing geodesics and their cut times, using the Extended Hadamard
Technique; see
\cite{albert2025geodesicsgrushinspaces,
    albert2026optimalsynthesisradiallysymmetric,borza2022,
    AgrachevBarilariBoscainBook2020}.

\begin{theorem}\label{radial cut time theorem}
    Let \((x_0,y_0)\in\mathbb{G}^{n+m}\), and let
    \(\gamma(\cdot;u_0,w_0)\) be a nonconstant geodesic starting from
    \((x_0,y_0)\), with \(\omega=|w_0|\). If \(\omega=0\), then
    \(\gamma\) is a horizontal line and has infinite cut time. If
    \(\omega\neq0\), then $t_{\operatorname{cut}}(\gamma)=\pi/\omega$.
    Moreover, the cut locus of $(x_0, y_0)$ is given by
    \begin{equation}\label{radial cut locus formula}
        \operatorname{Cut}(x_0,y_0)
        =
        \begin{cases}
            \left\{
            (-x_0,y):
            |y-y_0|\geq\frac{\pi}{2}|x_0|^2
            \right\},
             & x_0\neq0, \\[4pt]
            \left\{
            (0,y):y\neq y_0
            \right\},
             & x_0=0.
        \end{cases}
    \end{equation}
\end{theorem}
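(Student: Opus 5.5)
The plan is to apply the Extended Hadamard Technique (\cref{Extended Hadamard Technique}) with candidate cut time \(t^*_{\operatorname{cut}}=\pi/\omega\) (and \(+\infty\) for \(\omega=0\)) and candidate cut locus given by \cref{radial cut locus formula}. Since that theorem requires a Riemannian base point, I first treat \(x_0\neq0\). The case \(x_0=0\) will then follow by a limiting or symmetry argument.

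\emph{Upper bound \(t_{\operatorname{cut}}\leq\pi/\omega\).} I will exhibit a loss of uniqueness at time \(\pi/\omega\). By \cref{radial x formula}, \(x(\pi/\omega)=-x_0\) independently of \(u_0\). By \cref{g formula},
\[
y(\pi/\omega)=y_0+\frac{\pi(|u_0|^2+\omega^2|x_0|^2)}{2\omega^3}w_0 .
\]
This depends on \(u_0\) only through \(|u_0|\) (and on \(\langle x_0,u_0\rangle\) not at all at that time). There are two cases.
\begin{itemize}
\item For \(n\geq2\), the whole sphere of \(u_0\) with fixed \(|u_0|\), together with fixed \(w_0\), gives distinct geodesics of equal length reaching the same point. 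Hence they cannot minimize beyond \(\pi/\omega\).
\item For \(n=1\), the pair \(\pm u_0\) gives two distinct geodesics when \(u_0\neq0\). When \(u_0=0\), the time \(\pi/\omega\) is conjugate by \cref{Conjugacy Profile}(ii).
\end{itemize}
In both cases, a geodesic that is not minimizing past a point where another geodesic of equal length meets it, or past a conjugate point, has cut time at most \(\pi/\omega\). The image points are \((-x_0,y)\) with \(|y-y_0|=\frac{\pi}{2}\bigl(|u_0|^2/\omega^2+|x_0|^2\bigr)\cdot\frac{1}{\omega}\cdot\omega\). With unit speed \(|u_0|^2+\omega^2|x_0|^2=1\), this becomes \(|y-y_0|=\pi/(2\omega^2)\geq\frac{\pi}{2}|x_0|^2\), since \(\omega|x_0|\leq1\). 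This matches the candidate locus.

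\emph{Hadamard conditions.} Let \(N\) be the set of covectors with \(t<\pi/\omega\), and check the three conditions in order.
\begin{enumerate}[label=\normalfont(\roman*)]
\item \emph{Invertibility.} This follows from \cref{Conjugacy Profile}: for \(t<\pi/\omega\) we have \(g>0\), \(P>0\) and \(\sin(\omega t)>0\), so \(J\neq0\). The \(\omega=0\) lines also have nonzero Jacobian.
\item \emph{Image avoids the candidate locus and is simply connected.} The candidate locus is a closed half-line-type set (a closed set of \(y\) outside a ball, at the single point \(x=-x_0\)). It has codimension \(n\geq1\) in \(x\), together with a restriction in \(y\).
\begin{itemize}
\item For \(n\geq2\), the complement is simply connected because the removed set has codimension \(\geq2\).
\item For \(n=1\), the removed set is \(\{-x_0\}\times\{|y-y_0|\geq\frac{\pi}{2}x_0^2\}\). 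Its complement retracts onto a ball around \(\{x=-x_0,\ |y-y_0|<\frac{\pi}{2}x_0^2\}\) glued to two half-spaces, which is simply connected.
\end{itemize}
To see that \(\exp(N)\) misses the locus, note that \(x(t)=-x_0\) with \(t<\pi/\omega\) forces some \(u_0\) alignment. I then need to show that the \(y\)-displacement is then smaller than \(\frac{\pi}{2}|x_0|^2\), or that the point is not reached. Covering the complement (surjectivity onto \(M\setminus\operatorname{Cut}^*\)) comes from properness plus a degree or connectedness argument.
\item \emph{Properness.} If \(\exp(t_k\lambda_k)\) stays in a compact subset of \(M\setminus\operatorname{Cut}^*\), then the parameters stay bounded. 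In particular \(t_k\omega_k\) cannot approach \(\pi\), since the limit would land in \(\operatorname{Cut}^*\). Likewise \(\omega_k\to\infty\) is excluded, because this forces the \(x\)-amplitude to oscillate with displacement growing, contradicting boundedness.
\end{enumerate}

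The main obstacle is the properness and surjectivity analysis, particularly the degenerate regimes \(\omega\to0\) (matching the lines) and \(\omega t\to\pi\). I will handle these by explicit asymptotics of \cref{radial x formula} and \cref{g formula}.

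For \(x_0=0\), the base point is not Riemannian. Here I instead use the dilations \((x,y)\mapsto(\varepsilon x,\varepsilon^2 y)\) and the rotational symmetry in \(x\). From the formulas, \(x(\pi/\omega)=0\) and \(y(\pi/\omega)=y_0+\frac{\pi|u_0|^2}{2\omega^3}w_0\), which is again reached by a sphere (\(n\geq2\)) or by \(\pm u_0\) (\(n=1\)). Minimality up to \(\pi/\omega\) then follows by passing to the limit \(x_0\to0\) of the already established distance formulas, using continuity of \(d_{CC}\). This gives the cut locus \(\{(0,y):y\neq y_0\}\).
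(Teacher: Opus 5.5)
For \(x_0\neq0\) you follow the paper's strategy: the Extended Hadamard Technique with candidate cut time \(\pi/\omega\) and candidate locus \cref{radial cut locus formula}. However, the hypotheses of \cref{Extended Hadamard Technique}, which carry the real content, are not verified.

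\textbf{Containment of the image.} Most importantly, you never show that \(\mathrm{exp}_{(x_0,y_0)}(N)\) misses \(\operatorname{Cut}^*(x_0,y_0)\); you only say you ``then need to show'' it. Hypothesis (i) requires this. Your open--closed route to surjectivity (a proper local diffeomorphism into a connected manifold is onto) also only works once the map is known to land in \(M\setminus\operatorname{Cut}^*\).

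The missing idea is a one-parameter reduction along fibres of the endpoint map. In time-one coordinates with \(|w|=\rho\in(0,\pi)\), the condition \(x_1=-x_0\) forces \(u=-\rho\cot(\rho/2)\,x_0\). Then \cref{g formula} gives \(|y_1-y_0|=|x_0|^2(\rho-\sin\rho)/(1-\cos\rho)\), which increases strictly to \(\frac{\pi}{2}|x_0|^2\) as \(\rho\uparrow\pi\). The paper does this for every \(x_1\):
\begin{itemize}
\item it writes \(|y_1-y_0|=\Phi_{x_1}(\rho)\);
\item it proves \(\Phi_{x_1}'=P(1;u(\rho),\rho)>0\) using \cref{technical};
\item it computes the limits \(0\) and \(+\infty\), or \(\frac{\pi}{2}|x_0|^2\) when \(x_1=-x_0\).
\end{itemize}
This single computation gives containment, surjectivity and properness at once.

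\textbf{Properness.} Your sketch targets the wrong regime. For \(x_0\neq0\) the unit cosphere \(|u_0|^2+\omega^2|x_0|^2=1\) is compact, so \(\omega\leq1/|x_0|\) and \(\omega_k\to\infty\) cannot occur. When \(\omega_k\to\omega_*>0\), your limiting argument is fine. The genuinely degenerate case is \(\omega_k\to0\), \(t_k\to\infty\), \(\omega_kt_k\to\theta\in[0,\pi]\); in time-one coordinates this is \(|u|\to\infty\). There no limit lands in \(\operatorname{Cut}^*\), and you must show the endpoint escapes to infinity. For \(\theta>0\) this follows from \(|y-y_0|\sim(\theta-\sin\theta\cos\theta)/(2\omega_k^2)\); for \(\theta=0\), \(|x|\) itself blows up.

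\textbf{Simple connectivity.} Your argument fails for \(n=2\). Removing a closed subset of a codimension-two affine subspace can destroy simple connectivity (\(\mathbb R^3\) minus a line is the standard example), so general position only works for \(n\geq3\). Here the complement is simply connected only because the removed set has the gap \(|y-y_0|<\frac{\pi}{2}|x_0|^2\). The paper's homotopy \(F_s(x,y)=(x,y_0+(1-s)(y-y_0))\) settles all \(n\) at once.

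\textbf{A smaller slip.} For \(n\geq2\) and \(u_0=0\), the sphere of covectors with the same \(|u_0|\) is a single point. The upper bound there therefore needs \cref{Conjugacy Profile}(i), not non-uniqueness.

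\textbf{The case \(x_0=0\).} Your treatment here is a valid alternative to the paper's. Fix \((u_0,w_0)\), let \(x_0\to0\), and use continuity of \(d_{CC}\) together with the explicit lengths \(t(|u_0|^2+\omega^2|x_0|^2)^{1/2}\); this gives minimality on \([0,\pi/\omega]\). The pair \(\pm u_0\) then gives the upper bound. This avoids the paper's projection onto the classical Grushin plane and the singular synthesis of \cite{juilletphdthesis}. It does, however, rest entirely on the \(x_0\neq0\) case, where the gaps above lie.
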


\begin{proof}
    If \(\omega=0\), then \(\gamma(t)=(x_0+tu_0,y_0)\) and any horizontal competitor
    \(\eta=(\eta_x,\eta_y)\) with the same endpoints satisfies
    \(\ell(\eta)\geq\int_0^t|\dot\eta_x(s)|\,ds
    \geq
    |\eta_x(t)-\eta_x(0)|
    =
    |(x_0+tu_0)-x_0|
    =
    t|u_0|=\ell(\gamma|_{[0,t]})\), so \(\gamma\) minimizes for all time.

    Suppose that \(\omega\neq0\) and \(x_0\neq0\), and let \(\mathcal C\)
    denote the first set in \cref{radial cut locus formula}. By the
    discussion at the beginning of this section, \(\mathbb G^{n+m}\) is
    complete and ideal, and \((x_0,y_0)\) is a Riemannian point. For the
    candidate cut time \(\pi/\omega\), the candidate injectivity domain in
    \cref{Extended Hadamard Technique} is
    \(\mathcal N=\{(u,w):|w|<\pi\}\). For \(|w|=\pi\),
    \cref{radial x formula,g formula} give
    \begin{equation}\label{boundary exponential formula}
        \mathrm{exp}_{(x_0,y_0)}(u,w)
        =
        \left(
        -x_0,
        y_0+\frac{|u|^2+\pi^2|x_0|^2}{2\pi^2}w
        \right).
    \end{equation}
    Hence \(\partial\mathcal N\) maps onto \(\mathcal C\). If \(u\neq0\),
    choose \(\widetilde u\neq u\) with \(|\widetilde u|=|u|\).
    By \cref{boundary exponential formula}, the two corresponding geodesics
    have the same endpoint and length at \(t=1\), while
    \cref{radial x formula} shows that they are distinct. This excludes minimality past \(t=1\). If \(u=0\),
    \cref{Conjugacy Profile} shows that \(t=1\) is the first conjugate
    time. Thus \(\pi/\omega\) is an upper bound for the cut time, while
    \cref{Conjugacy Profile} also shows that
    \(D\mathrm{exp}_{(x_0,y_0)}\) is nonsingular on \(\mathcal N\).

    We next determine the image of \(\mathcal N\) by solving
    \(\mathrm{exp}_{(x_0,y_0)}(u,w)=(x_1,y_1)\) for
    \((u,w)\in\mathcal N\). Given \((x_1,y_1)\) with \(y_1\neq y_0\), set
    \(e=(y_1-y_0)/|y_1-y_0|\). Writing \(w=\rho e\),
    \(0<\rho<\pi\), the equation for the \(x\)-component gives
    \begin{equation*}
        u(\rho)
        =
        \frac{\rho}{\sin\rho}
        \bigl(x_1-\cos\rho\,x_0\bigr).
    \end{equation*}
    The remaining endpoint equation is
    \(|y_1-y_0|=\Phi_{x_1}(\rho)\), where
    \(\Phi_{x_1}(\rho):=\rho g(1;u(\rho),\rho)\). A direct computation using \cref{radial x formula} and the definition
    of \(P\) shows that
    \[
        \Phi_{x_1}'(\rho)
        =
        P(1;u(\rho),\rho)>0,
        \qquad 0<\rho<\pi,
    \]
    where the inequality follows from \cref{technical}.
    Furthermore, \cref{g formula} yields
    \begin{equation*}
        \lim_{\rho\downarrow0}\Phi_{x_1}(\rho)=0,
        \qquad
        \lim_{\rho\uparrow\pi}\Phi_{x_1}(\rho)
        =
        \begin{cases}
            +\infty,               & x_1\neq-x_0, \\[2pt]
            \dfrac{\pi}{2}|x_0|^2, & x_1=-x_0.
        \end{cases}
    \end{equation*}
    The case \(y_1=y_0\) corresponds to \(w=0\). Consequently,
    \(\mathrm{exp}_{(x_0,y_0)}(\mathcal N)
    =\mathbb G^{n+m}\setminus\mathcal C\). The same formulas and limits
    show that the preimage of every compact
    \(K\Subset\mathbb G^{n+m}\setminus\mathcal C\) is bounded away from
    \(|w|=\pi\) and bounded in \((u,w)\). Since it is closed, it is compact;
    hence the restricted exponential map is proper.

    Finally, \(\mathbb G^{n+m}\setminus\mathcal C\) is contractible: the
    homotopy
    \(F_s(x,y)=(x,y_0+(1-s)(y-y_0))\), \(0\leq s\leq1\), retracts it onto
    \(\mathbb R^n\times\{y_0\}\). Thus
    \cref{Extended Hadamard Technique} shows that
    \(\mathrm{exp}_{(x_0,y_0)}\) maps \(\mathcal N\)
    diffeomorphically onto \(\mathbb G^{n+m}\setminus\mathcal C\), proving
    the asserted cut time and cut locus.

    It remains to consider \(x_0=0\). By
    \cref{radial x formula,g formula}, \(\gamma\) lies in the affine plane
    \(\operatorname{span}\{u_0\}
    \times(y_0+\operatorname{span}\{w_0\})\). Set
    \(v=u_0/|u_0|\) and \(e=w_0/\omega\), fix \(0<t<\pi/\omega\), and let
    \(\eta=(x,y)\) be a horizontal curve joining \((0,y_0)\) to
    \(\gamma(t)\). Using its minimal controls, write
    \(\dot x=a\) and \(\dot y=|x|b\), so that
    \(\ell(\eta)=\int_0^t (|a|^2+|b|^2)^{1/2}\,d\tau\). Then by the Cauchy--Schwarz inequality,
    \(r=|x|\) and \(s=\langle y-y_0,e\rangle\) satisfy
    \(|\dot r|\leq|a|\) and \(\dot s=r\langle b,e\rangle\). Hence
    \((r,s)\) is horizontal in the classical Grushin plane and
    \begin{equation*}
        \ell(r,s)
        \leq
        \int_0^t\left(\dot r^2+\langle b,e\rangle^2\right)^{1/2}\,d\tau
        \leq \ell(\eta).
    \end{equation*}
    The projection of \(\gamma\) is the classical Grushin geodesic from
    \((0,0)\) given by
    \begin{equation*}
        \bar\gamma(t)
        =
        \left(
        \frac{|u_0|}{\omega}\sin(\omega t),
        \omega g(t;u_0,\omega)
        \right),
        \qquad 0\leq t\leq\frac{\pi}{\omega}.
    \end{equation*}
    By the singular-point optimal synthesis
    \cite[Section~1.6.3]{juilletphdthesis} (see also
    \cite[Lemma~17]{borza2022}), \(\bar\gamma|_{[0,t]}\) is uniquely
    minimizing. Therefore,
    \begin{equation*}
        \ell(\eta)
        \geq \ell(r,s)
        \geq \ell(\bar\gamma|_{[0,t]})
        =\ell(\gamma|_{[0,t]}).
    \end{equation*}
    Equality between $\ell(\eta)$ and $\ell(\gamma\rvert_{[0,t]})$ forces equality with $\ell(r,s)$, wherein the integral bounding $\ell(r,s)$ is then equal to $\ell(\eta)$, so that the integrands must be equal. The equality condition in the Cauchy--Schwarz inequality then shows that \(a\) must be parallel to \(x\) and \(b\) must be
    parallel to \(e\); the endpoints then fix their directions, so
    \(\eta=\gamma|_{[0,t]}\). Thus no cut occurs before \(\pi/\omega\).
    Conversely,
    \((r,s)\mapsto(rv,y_0+se)\) is an isometric embedding of the classical
    Grushin plane into \(\mathbb G^{n+m}\), so the classical synthesis also
    excludes minimality beyond \(\pi/\omega\). Hence
    \(t_{\operatorname{cut}}(\gamma)=\pi/\omega\).

    At this time, \cref{radial x formula,g formula} give
    \begin{equation*}
        \gamma\left(\frac{\pi}{\omega}\right)
        =
        \left(
        0,
        y_0+\frac{\pi|u_0|^2}{2\omega^3}w_0
        \right).
    \end{equation*}
    Moreover,
    \begin{equation*}
        \left\{
        \frac{\pi|u|^2}{2|w|^3}w:
        u\neq0,\ w\neq0
        \right\}
        =\mathbb R^m\setminus\{0\}.
    \end{equation*}
    Therefore every \((0,y)\), \(y\neq y_0\), is a cut point, and
    \(\operatorname{Cut}(0,y_0)=\{(0,y):y\neq y_0\}\).
\end{proof}
Let \(q_0=(x_0,y_0)\in\mathbb{G}^{n+m}\) and let
\(q_1=(x_1,y_1)\notin\operatorname{Cut}(q_0)\). By
\cref{radial cut time theorem}, there is a unique time-one
minimizing geodesic \(\gamma(\cdot;u_0,w_0)\) connecting \(q_0\) to
\(q_1\). Its distortion coefficient is
\begin{align*}
    \beta_t(q_0,q_1)
    =
    \frac{
        J_{(x_0,y_0)}(t;u_0,w_0)
    }{
        J_{(x_0,y_0)}(1;u_0,w_0)
    }.
\end{align*}
Writing \(\omega=|w_0|\), \cref{Conjugacy Profile} gives
\begin{align}\label{distortion formula}
    \beta_t(q_0,q_1)
    =
    \begin{cases}
        \displaystyle
        \left(
        \frac{\sin(\omega t)}{\sin\omega}
        \right)^{n-1}
        \left(
        \frac{
            G(\omega t,\omega x_0,u_0)
        }{
            G(\omega,\omega x_0,u_0)
        }
        \right)^{m-1}
        \left(
        t\frac{
             H(\omega t,\omega x_0,u_0)
         }{
             H(\omega,\omega x_0,u_0)
         }
        \right),
         & \omega\neq0, \\[10pt]
        \displaystyle
        t^n
        \left(
        \frac{
            t|x_0|^2+t^2\langle x_0,u_0\rangle
            +\frac{t^3}{3}|u_0|^2
        }{
            |x_0|^2+\langle x_0,u_0\rangle
            +\frac13|u_0|^2
        }
        \right)^m,
         & \omega=0,
    \end{cases}
\end{align}
where the condition \(\omega=0\) is equivalent to \(y_1=y_0\), and where
\begin{align}\label{H formula}
    H(z,\zeta,u)
    :=
    \sin z\,|\zeta|^2
    +z\sin z\,\langle\zeta,u\rangle
    +(\sin z-z\cos z)|u|^2
\end{align}
and
\begin{align*}
    G(z,\zeta,u)
    :=
    \frac{z+\sin z\cos z}{2}|\zeta|^2
    +\sin^2z\,\langle\zeta,u\rangle
    +\frac{z-\sin z\cos z}{2}|u|^2.
\end{align*}
Formula~\cref{H formula} follows from the uncompleted-square form of
\cref{P simplification}.

We now prove \cref{Radial Grushin MCP Theorem}. By the observations
following \cref{MCP theorem Rizzi}, unboundedness rules out \(K>0\),
while monotonicity and the metric dilations reduce the remaining cases
to determining the smallest \(N\) for which \(\operatorname{MCP}(0,N)\)
holds. By \cref{MCP theorem Rizzi}, this is equivalent to proving
\(\beta_t(q_0,q_1)\geq t^N\) for every
\(q_0\in\mathbb G^{n+m}\), every
\(q_1\notin\operatorname{Cut}(q_0)\), and every \(t\in[0,1]\). We use
\cite[Proposition~62]{BarilariRizzi2019} for the classical Grushin
factor and logarithmic differentiation for the remaining estimate.

\begin{proof}[Proof of \cref{Radial Grushin MCP Theorem}]
    We first prove that $\beta_t(q_0,q_1)\geq t^{n+4m}$. Suppose first that \(\omega\neq0\). Since the time-one endpoint lies
    before the cut time, \cref{radial cut time theorem} gives
    \(0<\omega<\pi\). By \cref{distortion formula}, it is enough to prove
    \begin{align}\label{radial distortion estimates}
        \frac{
            G(\omega t,\omega x_0,u_0)
        }{
            G(\omega,\omega x_0,u_0)
        }
        \geq t^4, \qquad
        t\frac{
             H(\omega t,\omega x_0,u_0)
         }{
             H(\omega,\omega x_0,u_0)
         }
        \geq t^5, \quad \text{ and } \quad
        \frac{\sin(\omega t)}{\sin\omega}\geq t.
    \end{align}
    Indeed, inserting \cref{radial distortion estimates} into
    \cref{distortion formula} gives the exponent
    \((n-1)+4(m-1)+5=n+4m\). The last inequality in
    \cref{radial distortion estimates} follows from the concavity of
    \(\sin\) on \([0,\pi]\).

    We next prove the middle inequality in
    \cref{radial distortion estimates}. Put \(\zeta=\omega x_0\); then
    \begin{equation*}
        \frac{H(\omega,\zeta,u_0)}{\sin\omega}
        =
        \left|\zeta+\frac{\omega}{2}u_0\right|^2
        +
        \left(
        1-\omega\cot\omega-\frac{\omega^2}{4}
        \right)|u_0|^2,
    \end{equation*}
    and \cref{technical} shows that \(H(\omega,\zeta,u_0)>0\) whenever
    \((\zeta,u_0)\neq(0,0)\). For \(a,b\in\mathbb{R}\), consider the
    classical Grushin plane with base point \(q_0=(a/\omega,y_0)\) and
    initial covector \(b\,dx+\omega\,dy\). By
    \cite[Propositions~61 and~62]{BarilariRizzi2019}, its distortion
    coefficient is \(tH(t\omega,a,b)/H(\omega,a,b)\), and hence
    \begin{equation}\label{classical H estimate}
        H(t\omega,a,b)\geq t^4H(\omega,a,b),
        \qquad
        0\leq t\leq1.
    \end{equation}
    The same formula and estimate also follow from
    \cref{non-straight line distortion,global inequality} by specializing to
    \(\alpha=1\). If \(u_0\neq0\), set \(e=u_0/|u_0|\),
    \(a=\langle\zeta,e\rangle\), and
    write \(\zeta=a e+\zeta_\perp\), where
    \(\zeta_\perp\perp e\). Then
    \begin{equation*}
        H(\omega,\zeta,u_0)
        =
        H(\omega,a,|u_0|)+\sin\omega\,|\zeta_\perp|^2.
    \end{equation*}
    Combining \cref{classical H estimate} with
    \(\sin(t\omega)\geq t\sin\omega\geq t^4\sin\omega\) gives
    \begin{align*}
        H(t\omega,\zeta,u_0)
         & \geq
        t^4H(\omega,a,|u_0|)
        +t^4\sin\omega\,|\zeta_\perp|^2
        =t^4H(\omega,\zeta,u_0).
    \end{align*}
    When \(u_0=0\), the same conclusion follows directly from
    \(H(\omega,\zeta,0)=\sin\omega\,|\zeta|^2\).

    It remains to prove the first inequality in
    \cref{radial distortion estimates}. For \(0<z<\pi\),
    \begin{equation*}
        G(z,\zeta,u_0)
        =
        \int_0^z
        |\zeta\cos s+u_0\sin s|^2\,ds,
    \end{equation*}
    so \(G(z,\zeta,u_0)>0\) whenever
    \((\zeta,u_0)\neq(0,0)\). Logarithmic differentiation therefore
    reduces the first inequality in \cref{radial distortion estimates} to
    \(z\partial_zG(z,\zeta,u_0)\leq4G(z,\zeta,u_0)\). Direct computation
    gives $4G-z\partial_zG
        =
        A(z)|\zeta|^2
        +C(z)\langle\zeta,u_0\rangle
        +B(z)|u_0|^2$,
    where
    \begin{equation*}
        \begin{gathered}
            A(z)=z(1+\sin^2z)+2\sin z\cos z,
            \qquad
            B(z)=z(1+\cos^2z)-2\sin z\cos z, \\
            C(z)=2\sin z(2\sin z-z\cos z).
        \end{gathered}
    \end{equation*}
    The coefficient \(A\) is positive on \((0,\pi)\): this is immediate
    for \(z\leq\pi/2\), while for \(z\geq\pi/2\) one has
    \(A(z)\geq z-1>0\). The positivity of
    \(B\) follows directly from \cref{second comparison}.
    Furthermore, $4A(z)B(z)-C(z)^2
        =
        8\left(
        z^2+z\sin z\cos z-2\sin^2z
        \right)$. Set \(F(z)=z^2+z\sin z\cos z-2\sin^2z\). On
    \((0,\pi/2)\), one has \(F^{(4)}(z)=8z\sin(2z)>0\), while
    \(F^{(j)}(0)=0\) for \(0\leq j\leq3\). Hence \(F(z)>0\) on this
    interval. On \([\pi/2,\pi)\),
    \begin{align*}
        F''(z)
        =
        2-2\cos(2z)-2z\sin(2z)>0,
    \end{align*}
    and \(F'(\pi/2)=\pi/2>0\). Since
    \(F(\pi/2)=\pi^2/4-2>0\), the function \(F\) remains positive on
    \([\pi/2,\pi)\). Since \(A(z)>0\), \(B(z)\geq0\), and
    \(4A(z)B(z)-C(z)^2>0\), we obtain
    \begin{equation*}
        4G(z,\zeta,u_0)-z\partial_zG(z,\zeta,u_0)\geq0.
    \end{equation*}
    This proves the first inequality.

    We have therefore proved all three inequalities in
    \cref{radial distortion estimates}, and hence
    \(\beta_t(q_0,q_1)\geq t^{n+4m}\) when \(\omega\neq0\). The
    \(\omega=0\) branch of
    \cref{distortion formula} is the limit of the \(\omega\neq0\) branch
    as \(\omega\to0\); hence the same inequality holds when \(\omega=0\).

    To prove sharpness, choose \(x_0\neq0\), \(\omega=0\), and
    \(u_0=-3x_0\). The corresponding endpoint
    \((-2x_0,y_0)\) is regular by \cref{radial cut locus formula}, and
    \begin{align*}
        \beta_t(q_0,q_1)
        =
        t^n
        \left(
        t-3t^2+3t^3
        \right)^m.
    \end{align*}
    The right-hand side equals \(1\) at \(t=1\) and has derivative
    \(n+4m\) there. If \(\beta_t(q_0,q_1)\geq t^N\) held on
    \([0,1]\), then
    \(f(t):=\beta_t(q_0,q_1)-t^N\) would satisfy
    \(f(t)\geq f(1)=0\). Hence
    \((f(t)-f(1))/(t-1)\leq0\) for \(t<1\). Letting \(t\uparrow1\)
    gives \(n+4m-N=f'_-(1)\leq0\), and therefore
    \(N\geq n+4m\).
\end{proof}

\section{\texorpdfstring{The measure contraction property of the
      $\alpha$-Grushin plane}{The measure contraction property of the alpha-Grushin plane}}
\label{sec:alpha-grushin-mcp}

\subsection{\texorpdfstring{Distortion coefficients of the
        $\alpha$-Grushin plane}{Distortion coefficients of the alpha-Grushin plane}}

In this section, we resolve the conjecture of \cite{borza2022}
concerning the curvature exponent of the \(\alpha\)-Grushin plane. Let
\(\alpha\in\mathbb R\) with \(\alpha\geq1\), and consider on \(\mathbb R^2\)
the vector fields
\begin{equation*}
    X=\partial_x,
    \qquad
    Y_\alpha=|x|^\alpha\partial_y.
\end{equation*}
We denote by \(\mathbb G_\alpha^2\) the resulting metric measure space,
equipped with the Carnot--Carath\'eodory distance and two-dimensional
Lebesgue measure. The structure is Riemannian on \(\{x\neq0\}\). When
\(\alpha\in\mathbb N\), replacing \(Y_\alpha\) by
\(x^\alpha\partial_y\) gives a smooth Hörmander family without changing
the horizontal curves, their lengths, or the induced
distance, and the step at each point of \(\{x=0\}\) is \(\alpha+1\). For noninteger
\(\alpha>1\), the original field \(Y_\alpha\) is \(C^1\), hence locally
Lipschitz, so the controlled differential equation defining horizontal
curves is well posed. Any two points can be joined by a horizontal curve and the
Carnot--Carath\'eodory distance is finite. It is moreover complete, and
therefore geodesic. Finally, there are no nonconstant abnormal minimizers. Thus, when \(\alpha\) is not
an integer, \(\mathbb G_\alpha^2\) is not a smooth sub-Riemannian manifold
in the classical Hörmander sense, but all the sub-Riemannian constructions
used in this paper remain valid.

Let \(\sin_\alpha\) be the solution of
\(\ddot y=-\alpha |y|^{2\alpha-2}y\), \(y(0)=0\), and \(\dot y(0)=1\),
and define \(\cos_\alpha=(\sin_\alpha)'\). We will repeatedly use
the energy identity
\begin{equation}\label{alpha energy identity}
    \cos_\alpha(t)^2+|\sin_\alpha(t)|^{2\alpha}=1.
\end{equation}
Let \(\pi_\alpha\) denote the first positive zero of
\(\sin_\alpha\). Both \(\sin_\alpha\) and \(\cos_\alpha\) are
\(2\pi_\alpha\)-periodic and satisfy
\begin{equation}\label{alpha antiperiodicity}
    \sin_\alpha(t+\pi_\alpha)=-\sin_\alpha(t),
    \qquad
    \cos_\alpha(t+\pi_\alpha)=-\cos_\alpha(t).
\end{equation}
For later use put $\tan_\alpha=\sin_\alpha/\cos_\alpha$ wherever it is defined, and $\cot_\alpha=(\tan_\alpha)^{-1}$. We recall from \cite{borza2022} the normal trajectories obtained from \cref{eq:hamiltonequation} and the optimal synthesis
of $\mathbb G^2_\alpha$.

\begin{theorem}
    Let \(q_0=(x_0,y_0)\in\mathbb G_\alpha^2\), and let
    \(\gamma(t)=(x(t),y(t))\) be a nonconstant, non-straight-line normal
    geodesic with initial covector \(p_0=(u_0,v_0)\) and constant speed $\kappa
            =
            (u_0^2+|x_0|^{2\alpha}v_0^2)^{1/2}$.
    There exist \(A,\omega\in\mathbb R\) and
    \(\phi\in[0,2\pi_\alpha)\), with \(A\omega=\kappa>0\), such that
    \begin{equation*}
        \begin{aligned}
            x(t)
             & =
            A\sin_\alpha(\omega t+\phi), \\
            y(t)
             & =
            y_0+
            v_0\frac{|A|^{2\alpha}}{\omega(\alpha+1)}
            \left(
            \omega t
            -\sin_\alpha(\omega t+\phi)\cos_\alpha(\omega t+\phi)
            +\sin_\alpha(\phi)\cos_\alpha(\phi)
            \right).
        \end{aligned}
    \end{equation*}
    When \(q_0=(0,y_0)\) is singular, the phase is
    \(\phi=0\) if \(u_0=\kappa\), and \(\phi=\pi_\alpha\) if
    \(u_0=-\kappa\). Moreover,
    \begin{equation*}
        \begin{aligned}
            t_{\operatorname{cut}}(\gamma)
            =
            \frac{\pi_\alpha}{|\omega|}, \quad \text{ and } \quad \operatorname{Cut}(q_0)
            =
            \begin{cases}
                \{0\}\times\bigl(\mathbb R\setminus\{y_0\}\bigr),
                 & x_0=0,    \\[4pt]
                \displaystyle
                \left\{
                (-x_0,y):
                |y-y_0|
                \geq
                \frac{\pi_\alpha|x_0|^{\alpha+1}}{\alpha+1}
                \right\},
                 & x_0\neq0.
            \end{cases}
        \end{aligned}
    \end{equation*}
    The horizontal straight lines $(x_0\pm \kappa t,y_0)$ are geodesics and optimal for all time. They correspond to $\omega=0$.
\end{theorem}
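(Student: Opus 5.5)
The statement is recalled from \cite{borza2022}, so I would verify it in three stages: integrate the Hamiltonian system, compute the cut times through the Extended Hadamard Technique at Riemannian points, and handle singular points by a separate comparison argument.

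\textbf{Integrating the Hamiltonian system.} With fiber coordinates \((u,v)\), the Hamiltonian is \(H=\tfrac12(u^2+|x|^{2\alpha}v^2)\). This gives
\[
\dot x=u,\qquad \dot u=-\alpha|x|^{2\alpha-2}x\,v^2,\qquad \dot y=|x|^{2\alpha}v,\qquad \dot v=0,
\]
so \(v\equiv v_0\).
\begin{itemize}
\item Set \(\omega=|v_0|^{1/\alpha}\) up to sign, and choose \(A\) so that \(A\omega=\kappa\).
\item Then \(x(t)=A\sin_\alpha(\omega t+\phi)\) solves \(\ddot x=-\alpha v_0^2|x|^{2\alpha-2}x\). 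This uses \(\ddot{\sin}_\alpha=-\alpha|\sin_\alpha|^{2\alpha-2}\sin_\alpha\) together with \(|A|^{2\alpha-2}\omega^2\cdot\omega^{2\alpha-2}\)-type scaling. I would fix \(\omega\) by \(\omega^2=|A|^{2\alpha-2}v_0^2\), which is consistent with \(A\omega=\kappa\) and \(\kappa^2=u_0^2+|x_0|^{2\alpha}v_0^2\).
\item The phase \(\phi\) is determined from \((x_0,u_0)\) via the energy identity \cref{alpha energy identity}.
\item For \(y\), I would integrate \(|A|^{2\alpha}v_0|\sin_\alpha|^{2\alpha}\). Using the energy identity and differentiating \(\sin_\alpha\cos_\alpha\) gives
\[
(\sin_\alpha\cos_\alpha)'=\cos_\alpha^2-\alpha|\sin_\alpha|^{2\alpha}=1-(\alpha+1)|\sin_\alpha|^{2\alpha}.
\]
This produces the stated formula for \(y\).
\item At \(x_0=0\) we have \(u_0=\pm\kappa\), which forces \(\phi=0\) or \(\phi=\pi_\alpha\) by \cref{alpha antiperiodicity}.
\end{itemize}

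\textbf{Cut time and cut locus at a Riemannian point \(x_0\neq0\).} I would mirror the proof of \cref{radial cut time theorem} with \(n=m=1\).
\begin{itemize}
\item \emph{Upper bound.} At \(t=\pi_\alpha/|\omega|\), antiperiodicity gives \(x=-x_0\). The \(y\)-displacement depends on \(u_0\) only through an even function, because the phases \(\phi\) and \(\pi_\alpha-\phi\) yield the same \(|A|\). So the geodesics with \(\pm u_0\) meet there with equal length, or are conjugate if \(u_0=0\). This shows \(t_{\operatorname{cut}}\le\pi_\alpha/|\omega|\).
\item \emph{Hadamard domain.} Take \(\mathcal N=\{|\omega|<\pi_\alpha\}\) after normalising time to \(1\).
\item \emph{Invertible differential.} I need the Jacobian to be nonzero on \(\mathcal N\). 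Following the \(\Phi_{x_1}\) argument, I fix \(x_1\), solve the \(x\)-equation uniquely for \(u\), and show that \(\rho\mapsto|y_1-y_0|\) is strictly increasing on \((0,\pi_\alpha)\). Its limits are \(0\) and, at \(\pi_\alpha\), either \(+\infty\) or \(\pi_\alpha|x_0|^{\alpha+1}/(\alpha+1)\) depending on whether \(x_1\neq-x_0\). These limits give surjectivity onto the complement of the candidate cut locus, and properness follows in the same way.
\item \emph{Simple connectivity} is as in the radial case.
\item \emph{Regularity.} For noninteger \(\alpha\), the Hadamard theorem needs \(H\in C^2\), which holds since \(\alpha\ge1\) makes \(|x|^{2\alpha}\in C^2\).
\end{itemize}

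\textbf{Singular point \(x_0=0\).} Here the Hadamard theorem does not apply directly.
\begin{itemize}
\item Geodesics with \(\phi=0\) and \(\phi=\pi_\alpha\) and the same \(|v_0|\) are mirror images under \(x\mapsto -x\). They reach \((0,y)\) simultaneously at \(t=\pi_\alpha/|\omega|\), which gives the upper bound on the cut time.
\item Every \(y\neq y_0\) is attained by scaling, since the map \(v_0\mapsto y\)-displacement covers \(\mathbb R\setminus\{0\}\).
\item For minimality before \(\pi_\alpha/|\omega|\), I would show that the exponential map restricted to \(\{|\omega|t<\pi_\alpha\}\) is injective and onto the complement. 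Injectivity uses the monotonicity of \(t\mapsto y(t)\) together with the sign of \(x\). Then I would conclude via existence of minimizers and the absence of abnormal minimizers.
\end{itemize}

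\textbf{Main obstacle.} The hardest step is proving that \(\rho\mapsto\Phi_{x_1}(\rho)\) is monotone, that is, that there are no conjugate points before \(\pi_\alpha\) for general \(\alpha\). Here the closed trigonometric identities of \cref{technical} are unavailable. I would first try to express the derivative through the Jacobian of the linearised equation and use a Sturm comparison for \(\ddot\xi=-\alpha(2\alpha-1)|x|^{2\alpha-2}\xi\) along the geodesic.
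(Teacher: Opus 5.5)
The paper does not prove this statement. It is quoted from \cite{borza2022}, and the paper actually runs the logic the other way: it deduces $G(z,\phi)\neq0$ for $0<|z|<\pi_\alpha$ \emph{from} the synthesis, so you cannot borrow that fact.

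Your architecture is sensible: integrate, apply \cref{Extended Hadamard Technique} at Riemannian points, and argue separately at $x_0=0$. The step carrying all the analytic content is missing, however. Hypothesis (ii) asks for invertibility of $D\mathrm{exp}_{q_0}$ on $\{|\omega|<\pi_\alpha\}$, equivalently strict monotonicity of $\Phi_{x_1}$. This is exactly what you flag as the main obstacle and only propose to ``try''.

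The suggested tool also does not fit. A Sturm comparison for $\ddot\xi=-\alpha(2\alpha-1)v_0^2|x|^{2\alpha-2}\xi$ controls only the scalar field $\partial x/\partial u_0$. The Jacobian, by contrast, is a $2\times2$ determinant coupling the $x$- and $y$-variations; in the $(\omega,\phi)$ chart it is proportional to $tG(t\omega,\phi)$.

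Two further steps are unsupported. Unlike the radial case, the $x$-equation is nonlinear in the covector, so ``solve uniquely for $u$'' needs its own argument. At the singular point, monotonicity of $t\mapsto y(t)$ and the sign of $x$ do not give injectivity. The geodesics of a one-sided family (say $\phi=0$, $v_0>0$) are dilates of one another and could a priori reach the same point at time one.

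All of this can be settled by one phase-plane comparison. $G(\tau,\phi)$ is the cross product $X_1Y_2-Y_1X_2$ of two points:
$P(\tau)=(\sin_\alpha,\cos_\alpha)(\tau+\phi)$, moving on the convex curve $|X|^{2\alpha}+Y^2=1$; and
$L(\tau)=(\sin_\alpha\phi+\tau\cos_\alpha\phi,\cos_\alpha\phi)$, the phase point of the tangent line at $\tau=0$.

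Measure polar angles $\theta$ clockwise from the $Y$-axis. Both points turn clockwise with speeds depending only on $\theta$: $\cos^2\theta+\alpha\varrho^{2\alpha-2}|\sin\theta|^{2\alpha}$ for $P$, and $\cos^2\theta$ for $L$ when $\cos_\alpha\phi\neq0$. Here $\varrho=\varrho(\theta)$ is the radial function of the curve. Starting together, $P$ reaches every angle strictly earlier than $L$. By \cref{alpha antiperiodicity}, $P$ turns by exactly $\pi$ on $[0,\pi_\alpha]$. Hence the angular gap lies in $(0,\pi)$ and $G(\tau,\phi)>0$ on $(0,\pi_\alpha)$. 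The case $\cos_\alpha\phi=0$ is direct, and $G(-z,\phi)=-G(z,-\phi)$ handles $z<0$.

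This comparison also closes the other steps:
\begin{itemize}
\item $G(\pi_\alpha,\phi)=\pi_\alpha\cos_\alpha^2\phi$, which confirms your unproved claim that $u_0=0$ gives a conjugate point.
\item The same half-turn argument gives $\sin_\alpha(\omega+\phi)\cos_\alpha\phi-\cos_\alpha(\omega+\phi)\sin_\alpha\phi>0$ for $0<\omega<\pi_\alpha$. Hence $\phi\mapsto x_0\sin_\alpha(\omega+\phi)/\sin_\alpha\phi$ is strictly monotone between zeros of $\sin_\alpha$, which is the unique solvability you need.
\item At $x_0=0$, injectivity amounts to strict monotonicity of the dilation invariant $(y_1-y_0)/x_1^{\alpha+1}=(\omega-\sin_\alpha\omega\cos_\alpha\omega)/\bigl((\alpha+1)\sin_\alpha^{\alpha+1}\omega\bigr)$. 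Its derivative is $G(\omega,0)/\sin_\alpha^{\alpha+2}\omega>0$.
\end{itemize}

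Two minor points remain. First, $\omega=|v_0|^{1/\alpha}$ holds only for $\kappa=1$; in general $|\omega|=\kappa^{1-1/\alpha}|v_0|^{1/\alpha}$, which is what your normalisation $\omega^2=|A|^{2\alpha-2}v_0^2$ gives. Second, you do not address optimality of the horizontal lines, though the projection argument in \cref{radial cut time theorem} carries over verbatim.
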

For a Riemannian initial point $q_0$, the relation
\(x_0=A\sin_\alpha\phi\) determines \(A\) from \(\phi\), so the exponential map at such a point may be parametrized by \((\omega,\phi)\) only. The Jacobian of the exponential map evaluated at time \(t\) can be written as
\begin{equation*}
    \det D_{(\omega,\phi)}
    \mathrm{exp}_{q_0}^t
    =
    t \,
    \frac{x_0}{\sin_\alpha(\phi)^2}
    \left|
    \frac{x_0}{\sin_\alpha\phi}
    \right|^{\alpha+1} G(t\omega,\phi),
\end{equation*}
where $G(z,\phi)
    :=
    \sin_\alpha(z+\phi)\cos_\alpha\phi
    -
    \cos_\alpha(z+\phi)
    (z\cos_\alpha\phi+\sin_\alpha\phi)$.
Since there
are no conjugate times before the cut time
\(\pi_\alpha/|\omega|\), one has $G(z,\phi)\neq0$ for $0<|z|<\pi_\alpha$ and $\phi\in[0,2\pi_\alpha)$.

Suppose that a point \(q_1\in\mathbb G_\alpha^2\) does not lie on the same horizontal line as \(q_0\).
Then, if \(q_1\notin\operatorname{Cut}(q_0)\), we have
\begin{equation}\label{non-straight line distortion}
    \beta_t(q_0,q_1)
    =
    \frac{
        \det D_{(\omega,\phi)}
        \mathrm{exp}_{q_0}^t
    }{
        \det D_{(\omega,\phi)}
        \mathrm{exp}_{q_0}
    }
    =
    t\frac{G(t\omega,\phi)}{G(\omega,\phi)}, \qquad \text{ for every \(t\in[0,1]\), }
\end{equation}
where \((\omega,\phi)\) are the unique
parameters of the minimizing geodesic satisfying
\(q_1=\mathrm{exp}_{q_0}(\omega,\phi)\).
The distortion formula \cref{non-straight line distortion} extends to the case where $q_0$ is singular by taking \(\phi=0\). Finally, if \(q_0\) and \(q_1\) do lie on the same
horizontal line, which corresponds to the case where \(v_0=0\) (or $\omega = 0$), then the unique minimizing geodesic joining them is the horizontal segment $\gamma(t)=(x_0+tu_0,y_0)$, where $u_0$ is the unique parameter such that \(q_1=\mathrm{exp}_{q_0}(u_0,0)\), and
\begin{equation*}
    \beta_t(q_0,q_1)
    =
    t
    \frac{
        |x_0+tu_0|^{2\alpha}(x_0+tu_0)-|x_0|^{2\alpha}x_0
    }{
        |x_0+u_0|^{2\alpha}(x_0+u_0)-|x_0|^{2\alpha}x_0
    },
\end{equation*}
which is the limit of \cref{non-straight line distortion} as $\omega \to 0$.

As in the proof of
\cref{Radial Grushin MCP Theorem} and via \cref{lem:jacobian-mcp}, the
\(\operatorname{MCP}(0,N)\) problem reduces to proving
\(\beta_t(q_0,q_1)\geq t^N\) for every \(t\in[0,1]\), separately for
the $\omega=0$ and $\omega\neq 0$ geodesics.

Following \cite{borza2022}, let \(m_\alpha\in[-3,-2]\) be the unique
solution of
\begin{equation}
    \label{eq:malpha-definition}
    (m+1)|m+1|^{2\alpha}
    -
    \bigl((2\alpha+1)m+1\bigr)
    =
    0,
\end{equation}
and define $M_\alpha:=|m_\alpha+1|^{2\alpha}$ and $N_\alpha:=1+M_\alpha$.
In \cite{borza2022}, it was conjectured that \(N_\alpha\) is the
curvature exponent of the \(\alpha\)-Grushin plane, and the
bound \(\beta_t(q_0,q_1)\geq t^{N_\alpha}\) was established when either \(q_0\) is
singular or \(q_0\) and \(q_1\) lie on the same horizontal line. It therefore only remains to establish
this bound for non-horizontal geodesics originating from a Riemannian
point, whose distortion coefficient is given by
\cref{non-straight line distortion}.

We define
\begin{equation*}
    g(z,\phi)
    :=
    z\frac{\partial_zG(z,\phi)}{G(z,\phi)},
    \qquad
    0<|z|<\pi_\alpha.
\end{equation*}
Since \(G(\cdot,\phi)\) is \(C^2\) and does not vanish for
\(0<|z|<\pi_\alpha\), for each fixed \(\phi\) the maps
\(z\mapsto\log|G(z,\phi)|\) and \(z\mapsto g(z,\phi)\) are respectively
\(C^2\) and \(C^1\) on each half-interval. Moreover,
\begin{equation*}
    \partial_z\log|G(z,\phi)|
    =
    \frac{G_z(z,\phi)}{G(z,\phi)}
    =
    \frac{g(z,\phi)}{z}.
\end{equation*}
To obtain \cref{alpha Grushin MCP Theorem} it is enough to prove
\begin{equation}\label{globalinequality}
    g(z,\phi)\leq M_\alpha,
    \qquad
    0<|z|<\pi_\alpha,\quad
    \phi\in[0,2\pi_\alpha).
\end{equation}
Indeed, for \(\omega\neq0\) and \(0<t\leq1\), the nonvanishing of
\(G\) gives
\begin{equation*}
    \log
    \frac{|G(\omega,\phi)|}{|G(t\omega,\phi)|}
    =
    \int_t^1
    \frac{g(s\omega,\phi)}{s}\,ds
    \leq
    M_\alpha\log\frac1t.
\end{equation*}
Therefore
\(G(t\omega,\phi)/G(\omega,\phi)\geq t^{M_\alpha}\), and
\cref{non-straight line distortion} gives
\(\beta_t(q_0,q_1)\geq t^{1+M_\alpha}=t^{N_\alpha}\). Finally, $G(t\omega,\phi)/G(\omega,\phi)\geq t^{M_\alpha}$ is implied by \cref{globalinequality}.
Therefore, the rest of this work will be devoted to proving the following theorem.
\begin{theorem}\label{global inequality}
    For all $(z,\phi)$ with $0<|z|<\pi_\alpha$ and $\phi\in[0,2\pi_\alpha)$, one has
    \begin{equation}
        \label{eq:MCPtoprove}
        g(z,\phi)\leq M_\alpha.
    \end{equation}
    Moreover, the supremum of $g$ over $0<|z|<\pi_\alpha$ and $\phi\in [0,2\pi_\alpha)$ is exactly $M_\alpha$.
\end{theorem}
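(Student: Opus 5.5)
We would prove \cref{global inequality} by a first-contact argument for the logarithmic derivative \(g\), combined with a blow-up analysis at the boundary of the domain.

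\textbf{Step 1: reduction to \(z>0\).} The substitution \((z,\phi)\mapsto(-z,-\phi)\) (mod \(2\pi_\alpha\)) uses the oddness of \(\sin_\alpha\) and the evenness of \(\cos_\alpha\). It gives \(G(-z,-\phi)=-G(z,\phi)\), and hence \(g(-z,-\phi)=g(z,\phi)\). The antiperiodicity \cref{alpha antiperiodicity} gives \(G(z,\phi+\pi_\alpha)=G(z,\phi)\). It therefore suffices to treat \(0<z<\pi_\alpha\) and \(\phi\in[0,\pi_\alpha)\).

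\textbf{Step 2: the boundary behaviour of \(g\).} As \(z\downarrow0\), Taylor expansion of \(G\) using \(\ddot{\sin}_\alpha=-\alpha|\sin_\alpha|^{2\alpha-2}\sin_\alpha\) shows that \(G(z,\phi)\) vanishes to a finite order. That order is \(3\) generically, and larger when \(\sin_\alpha\phi=0\). Hence \(g\to3\le M_\alpha\), or \(g\) tends to the corresponding higher order near the singular phase.

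The singular phase \(\phi=0\) is a degenerate corner of the domain. This is where the blow-up is needed. Set \(\phi=\epsilon\), \(z=\epsilon s\) with \(s>0\) fixed, and use \(\sin_\alpha\theta\approx\theta\) and \(\cos_\alpha\theta\approx1-\theta^{2\alpha+1}\)-type expansions. Then \(G\) rescales to an explicit profile in \(s\) of the form
\[
c\,\epsilon^{2\alpha+1}\bigl((s+1)|s+1|^{2\alpha}-(2\alpha+1)s-1\bigr).
\]
The limit of \(g\) along this blow-up is therefore \(s\,\partial_s\log\) of this profile. We then maximise over \(s\), writing \(L=s+1\). We expect this maximum to be exactly
\[
\max_{L>1}\frac{(2\alpha+1)L}{(L-1)^{2\alpha+1}+1}-1=M_\alpha,
\]
attained at \(m_\alpha\) from \cref{eq:malpha-definition} after the sign change coming from the sign of \(z\). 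This computation proves the "moreover" statement that the supremum is at least \(M_\alpha\). It also identifies the only place where \(g\) can approach \(M_\alpha\).

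At \(z\uparrow\pi_\alpha\), \(G\) stays nonzero (possibly after using the conjugate-time analysis). Hence \(g\) stays bounded there. We would show explicitly that \(g(\pi_\alpha^-,\phi)<M_\alpha\), using the energy identity \cref{alpha energy identity}. At \(\phi=0\) itself, \(g\le M_\alpha\) is the singular case already settled in \cite{borza2022}.

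\textbf{Step 3: first contact in the interior.} Suppose \(g\) exceeds \(M_\alpha\) somewhere. By Step 2, continuity and compactness of the closure of the domain minus small neighbourhoods of the corners, there is a first level \(c\ge M_\alpha\) and an interior point \((z_*,\phi_*)\) where \(g=c\), \(\partial_zg=0\) and \(\partial_\phi g=0\). Equivalently, \(zG_z=cG\), \(zG_{zz}=(c-1)G_z\), and \(zG_{z\phi}G=cG_\phi G_z\)-type relations hold there.

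We then differentiate \(G\) using the ODE for \(\sin_\alpha\). The derivatives \(G_z\) and \(G_{zz}\) are explicit in \(\sin_\alpha(z+\phi)\), \(\cos_\alpha(z+\phi)\), \(\sin_\alpha\phi\) and \(\cos_\alpha\phi\); for example,
\[
G_z=z\cos_\alpha\phi\cdot\alpha|\sin_\alpha(z+\phi)|^{2\alpha-2}\sin_\alpha(z+\phi)+\dots
\]
Eliminating \(z\) from these critical-point equations should produce an algebraic contradiction unless \(c<M_\alpha\).

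\textbf{Main obstacle.} We expect this elimination to be the main obstacle: the critical-point system must be shown incompatible with \(g\ge M_\alpha\) for every \(\alpha\ge1\). Our first attempt would be a different first-contact formulation. Consider the function \(F_c(z,\phi)=zG_z-cG\). At a first contact with \(c=M_\alpha\), \(F_c\) attains a one-sided extremum in both variables. We would then try to show that the \(z\)-equation together with the \(\phi\)-equation forces \(\sin_\alpha\phi\to0\). That would return us to the blow-up regime, where Step 2 shows strict inequality except in the limit.
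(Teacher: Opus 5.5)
There is a genuine gap, and it is the step you flag as the main obstacle. Nothing in the proposal rules out a point with $0<|z|<\pi_\alpha$ where $g>M_\alpha$, and that is the whole content of the theorem. ``Eliminating $z$ should produce a contradiction'' is a hope, not an argument. The equations are transcendental in $z,\sin_\alpha,\cos_\alpha$, and the target inequality is sharp (equality holds in the blow-up limit), so a genuinely quantitative estimate is needed.

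Your fallback, forcing $\sin_\alpha\phi\to0$, has no mechanism behind it. The sign constraints available at a hypothetical contact point only place $\phi$ at some distance $r\in(0,\pi_\alpha/2)$ from $\pi_\alpha\mathbb Z$, with no smallness. The contradiction has to come from an inequality valid uniformly in $r$.

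The idea you are missing, and the one the paper uses, is to argue ray by ray and never use $\partial_\phi g$. Write $S=\sin_\alpha(z+\phi)$, $C=\cos_\alpha(z+\phi)$, $A=\sin_\alpha\phi$, $B=\cos_\alpha\phi$, $Q=A+zB$. Then $G_z=\alpha|S|^{2\alpha-2}SQ$, and for fixed $\phi$ one has the Riccati equation
\[
\partial_z g=\alpha z|S|^{2\alpha-2}+\Bigl(\frac1z+2\alpha\frac CS\Bigr)g-\frac{g^2}{z}.
\]
The argument then runs as follows.
\begin{enumerate}
\item Since $g(\cdot,\phi)\to1$ or $2\alpha+1$ as $z\to0$, both below $M_\alpha$, a violation gives a first contact with $g=M_\alpha$ and $z\partial_zg\ge0$.
\item Feed this and the level equation $M_\alpha(SB-CQ)=\alpha z|S|^{2\alpha-2}SQ$ into the Riccati equation. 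Together with $M_\alpha\ge\frac43(2\alpha+1)$, this forces $zC/S>0$, $A/Q<0$ and $zB/Q>0$.
\item These signs give $\phi=j\pi_\alpha-\operatorname{sgn}(z)r$ and $z+\phi=j\pi_\alpha+\operatorname{sgn}(z)s$, with $r,s\in(0,\pi_\alpha/2)$ and $|z|=r+s$. They turn $g=M_\alpha$ into
\[
E(r,s):=\tan_\alpha r+\frac{M_\alpha\sin_\alpha s}{M_\alpha\cos_\alpha s+\alpha(r+s)\sin_\alpha(s)^{2\alpha-1}}-(r+s)=0 .
\]
\item One shows $E>0$ on the whole square from two uniform estimates. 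The first is $\tan_\alpha r-r>\frac{\alpha}{2\alpha+1}r^{2\alpha+1}$. The second, via AM--GM and the nontrivial inequality $\mathcal H_\alpha\ge0$, bounds the remaining term by $\alpha s^{2\alpha}\bigl(\frac{r+s}{M_\alpha}-\frac{s}{2\alpha+1}\bigr)$.
\item With $r=(L-1)s$ these combine into $\alpha s^{2\alpha+1}\bigl[\frac{(L-1)^{2\alpha+1}+1}{2\alpha+1}-\frac{L}{M_\alpha}\bigr]\ge0$, which is exactly the variational formula for $M_\alpha$.
\end{enumerate}
None of this structure appears in your plan.

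Your Step 2 also contains errors that need fixing, even for the ``moreover'' part.
\begin{itemize}
\item Since $\partial_zG(0,\phi)=\alpha|\sin_\alpha\phi|^{2\alpha}$, $G$ vanishes to first order when $\sin_\alpha\phi\neq0$, so $g\to1$, not $3$. For $\phi\in\pi_\alpha\mathbb Z$ the order is $2\alpha+1$ and $g\to2\alpha+1$.
\item With $\phi=\epsilon$ and $z=\epsilon s$, the blow-up profile is $\frac{\alpha}{2\alpha+1}\epsilon^{2\alpha+1}\bigl((1+s)|1+s|^{2\alpha}-1\bigr)$. What you wrote is the left-hand side of the equation defining $m_\alpha$. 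It vanishes to second order at $s=0$ and would give $g\to2$.
\item The maximum of the resulting $s\,\partial_s\log(\cdot)$ is $\max_{L>1}\frac{(2\alpha+1)L}{(L-1)^{2\alpha+1}+1}=M_\alpha$ itself, with no ``$-1$''. It is attained at $s=m_\alpha<0$. After your reduction to $z>0$, the relevant corner is therefore $\phi\to\pi_\alpha^-$, not $\phi=\epsilon>0$.
\item $G(\pi_\alpha,\phi)=\pi_\alpha\cos_\alpha(\phi)^2$ vanishes at $\phi=\pi_\alpha/2$, so $G$ does not stay away from zero as $z\uparrow\pi_\alpha$. This is harmless for an upper bound, since $G_z<0$ there and $g\to-\infty$, but your stated reason is wrong. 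The ray-wise argument never needs this boundary at all.
\end{itemize}
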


\subsection{Equivalent forms of the conjectured curvature exponent}
\label{subsec:equivalent-curvature-exponent}

We record several equivalent expressions for \(M_\alpha\) that will
be used in the proof of \cref{global inequality} in the next section. Define
\begin{equation*}
    p_\alpha(t)
    :=
    \begin{cases}
        \displaystyle
        \frac{t|t|^{2\alpha}-1}{t-1},
         & t\neq1, \\[6pt]
        2\alpha+1,
         & t=1.
    \end{cases}
\end{equation*}
When \(\alpha\in\mathbb N\), this reduces to
\(p_\alpha(t)=\sum_{k=0}^{2\alpha}t^k\).

\begin{proposition}[Equivalent formulas for \(M_\alpha\)]
    For every real \(\alpha\geq1\), the function \(p_\alpha\) has a unique
    minimizer \(t_\alpha\in(-1,-1/2]\), and
    \begin{equation}
        \label{Malpha equivalent forms}
        M_\alpha
        =
        \max_{L>1}
        \frac{(2\alpha+1)L}
        {(L-1)^{2\alpha+1}+1} =
        \frac{2\alpha+1}
        {\displaystyle\min_{t\in\mathbb R}p_\alpha(t)}
        \geq
        \frac{4}{3}(2\alpha+1).
    \end{equation}
    The maximum over \(L>1\) in \cref{Malpha equivalent forms} is attained uniquely at
    \begin{equation}\label{eq:Lalpha-range}
        L_\alpha
        =
        1-t_\alpha
        =
        \frac{m_\alpha}{m_\alpha+1}
        =
        \frac{M_\alpha-1}{2\alpha}
        \in[3/2,2).
    \end{equation}
\end{proposition}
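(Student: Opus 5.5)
The plan is to reduce everything to a one-variable calculus problem through the substitution \(t=1-L\). For \(L>1\) we have \(t<0\) and \(t|t|^{2\alpha}=-(L-1)^{2\alpha+1}\), so
\[
    p_\alpha(1-L)=\frac{(L-1)^{2\alpha+1}+1}{L}=:f_\alpha(L).
\]
It follows that \(\max_{L>1}(2\alpha+1)L/\bigl((L-1)^{2\alpha+1}+1\bigr)=(2\alpha+1)/\inf_{t<0}p_\alpha(t)\), provided the infimum is attained.

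The first step is to show that the global minimum of \(p_\alpha\) over \(\mathbb R\) is attained at a negative \(t\). For \(t\geq0\), the numerator \(t^{2\alpha+1}-1\) and the denominator \(t-1\) have the same sign, and one has \(p_\alpha\geq1\).
\begin{itemize}
    \item For \(0\leq t<1\), this follows from \(t^{2\alpha+1}\leq t\).
    \item For \(t>1\), it follows from \(t^{2\alpha+1}\geq t\).
    \item At \(t=1\), \(p_\alpha(1)=2\alpha+1\geq1\) directly.
\end{itemize}
On the other hand, \(p_\alpha(-1/2)=\tfrac23(1+2^{-2\alpha-1})<1\), \(p_\alpha(-1)=1\), and \(p_\alpha(t)\to\infty\) as \(t\to-\infty\). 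Hence any global minimizer lies in \((-1,0)\), that is, \(L\in(1,2)\).

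The second step locates the critical points. Differentiating \(f_\alpha\) gives
\[
    L^2 f_\alpha'(L)=(2\alpha+1)L(L-1)^{2\alpha}-(L-1)^{2\alpha+1}-1=(L-1)^{2\alpha}(2\alpha L+1)-1.
\]
The function \(k(L):=(L-1)^{2\alpha}(2\alpha L+1)\) is strictly increasing on \((1,\infty)\), with \(k(1^+)=0\) and \(k(\infty)=\infty\). Therefore \(f_\alpha\) decreases and then increases, and it has a unique minimizer \(L_\alpha\) characterized by \(k(L_\alpha)=1\). This gives uniqueness of \(t_\alpha=1-L_\alpha\).

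At \(L_\alpha\), the relation \((L-1)^{2\alpha+1}+1=(2\alpha+1)L(L-1)^{2\alpha}\) yields
\[
    \frac{(2\alpha+1)L_\alpha}{(L_\alpha-1)^{2\alpha+1}+1}=(L_\alpha-1)^{-2\alpha}=2\alpha L_\alpha+1.
\]
This gives the identity \(L_\alpha=(M-1)/(2\alpha)\) once we know \(M\) equals this common value.

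For the range of \(L_\alpha\), we evaluate \(k\) at the endpoints. We have \(k(3/2)=2^{-2\alpha}(3\alpha+1)\leq1\) for \(\alpha\geq1\), since \(4^\alpha\geq3\alpha+1\). We also have \(k(2)=4\alpha+1>1\). Hence \(L_\alpha\in[3/2,2)\) and \(t_\alpha\in(-1,-1/2]\).

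The third step identifies this value with \(M_\alpha=|m_\alpha+1|^{2\alpha}\). Set \(m:=-L_\alpha/(L_\alpha-1)\), so that \(m+1=-1/(L_\alpha-1)<0\) and \(m/(m+1)=L_\alpha\). Then the two sides of \cref{eq:malpha-definition} read
\[
    (m+1)|m+1|^{2\alpha}=-(L_\alpha-1)^{-2\alpha-1},
    \qquad
    (2\alpha+1)m+1=-\frac{2\alpha L_\alpha+1}{L_\alpha-1}.
\]
Consequently, \cref{eq:malpha-definition} is equivalent to \((L_\alpha-1)^{-2\alpha}=2\alpha L_\alpha+1\), which is exactly \(k(L_\alpha)=1\). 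The map \(L\mapsto -L/(L-1)\) sends \([3/2,2)\) onto \([-3,-2)\subset[-3,-2]\). By uniqueness of \(m_\alpha\) in \([-3,-2]\), we get \(m=m_\alpha\). Hence
\[
    M_\alpha=|m_\alpha+1|^{2\alpha}=(L_\alpha-1)^{-2\alpha}=2\alpha L_\alpha+1,
\]
which gives all the equalities in \cref{Malpha equivalent forms,eq:Lalpha-range}.

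The lower bound is then immediate:
\[
    M_\alpha=2\alpha L_\alpha+1\geq3\alpha+1\geq\tfrac43(2\alpha+1),
\]
where the last inequality is equivalent to \(\alpha\geq1\).

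The only mildly delicate points are the monotonicity of \(k\) and the endpoint estimate \(4^\alpha\geq3\alpha+1\), which holds by convexity in \(\alpha\) together with equality at \(\alpha=1\). For noninteger \(\alpha\), one must also be careful to use \(|t|^{2\alpha}\) rather than \(t^{2\alpha}\) throughout.
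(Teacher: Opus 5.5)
Your proof is correct and is essentially the paper's argument written in the variable \(L=1-t\): your function satisfies \(1-k(L)=H_\alpha(1-L)\) with the paper's \(H_\alpha\), your endpoint checks \(k(3/2)\leq1<k(2)\) are the paper's \(H_\alpha(-1/2)\geq0>H_\alpha(-1)\), and your substitution \(m=-L/(L-1)\) is the inverse of the paper's composite \(m\mapsto t=1/(m+1)\mapsto L=1-t\). The differences are cosmetic. You obtain the lower bound from \(M_\alpha=2\alpha L_\alpha+1\geq3\alpha+1\) rather than by evaluating the maximand at \(L=3/2\). Also, your first-step claim that every global minimizer lies in \((-1,0)\) does not follow from \(p_\alpha(-1)=1\) and growth at \(-\infty\) alone; it is justified only after your second step, or by noting directly that \(p_\alpha\geq1\) on \((-\infty,-1]\).
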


\begin{proof}
    For \(t\geq0\), one has \(p_\alpha(t)\geq1\). For \(t<0\), we find that
    \begin{equation*}
        p_\alpha(t) =\frac{1+|t|^{2\alpha+1}}{1+|t|}, \quad \text{ and } \quad
        p_\alpha'(t)
        =\frac{H_\alpha(t)}{(1+|t|)^2},
    \end{equation*}
    where $H_\alpha(t) :=1-(2\alpha+1)|t|^{2\alpha}
        +2\alpha t|t|^{2\alpha}$. Moreover, $H_\alpha'(t) =2\alpha(2\alpha+1)|t|^{2\alpha-1}(1+|t|)>0$,
    \(H_\alpha(-1)=-4\alpha<0\) and
    \(H_\alpha(-1/2)=1-(3\alpha+1)/4^{\alpha}\geq0\).
    Thus \(H_\alpha\) has a unique zero
    \(t_\alpha\in(-1,-1/2]\). Since \(p_\alpha(t)\to+\infty\) as
    \(t\to-\infty\) and
    \(p_\alpha(t_\alpha)\leq p_\alpha(-1/2)\leq3/4\), this is the
    unique global minimizer of \(p_\alpha\).

    The bijection $[-3,-2]\to[-1,-1/2]:
        m \mapsto t := 1/(m+1)$
    transforms the defining equation for \(m_\alpha\) into
    \(H_\alpha(t)=0\). Hence
    \(t_\alpha=1/(m_\alpha+1)\). Using
    \(H_\alpha(t_\alpha)=0\), we obtain
    \begin{equation*}
        p_\alpha(t_\alpha)
        =(2\alpha+1)|t_\alpha|^{2\alpha},
        \qquad
        M_\alpha
        =|t_\alpha|^{-2\alpha}
        =2\alpha+1-2\alpha t_\alpha
        =\frac{2\alpha+1}{p_\alpha(t_\alpha)}.
    \end{equation*}

    Finally, the bijection $(1,\infty)\to(-\infty,0):
        L\mapsto t:=1-L$ gives
    \begin{equation*}
        \frac{(2\alpha+1)L}{(L-1)^{2\alpha+1}+1}
        =\frac{2\alpha+1}{p_\alpha(t)}.
    \end{equation*}
    Therefore the maximum is attained uniquely at
    \(L_\alpha=1-t_\alpha=m_\alpha/(m_\alpha+1)\), and the preceding
    formula for \(M_\alpha\) gives
    \(L_\alpha=(M_\alpha-1)/(2\alpha)\). Evaluating the maximum at
    \(L=3/2\) yields
    \begin{equation*}
        M_\alpha
        \geq
        \frac{3(2\alpha+1)}
        {2\bigl(1+2^{-(2\alpha+1)}\bigr)}
        \geq
        \frac{4}{3}(2\alpha+1),
    \end{equation*}
    as required.
\end{proof}

\begin{remark}[Irrationality for integer \(\alpha\)]
    \label{rem:alpha-curvature-irrationality}
    For every integer \(\alpha\geq2\), the exponent \(N_\alpha\) is irrational.
    As shown in the preceding proof, \(H_\alpha\) is strictly increasing on
    \((-\infty,0)\) and vanishes at \(t_\alpha\).
    Since \(H_\alpha(-1)<0\) and
    \(H_\alpha(-1/2)=1-(3\alpha+1)/4^\alpha>0\), we have
    \(-1<t_\alpha<-1/2\). The relation \(m_\alpha=1/t_\alpha-1\)
    therefore gives \(-3<m_\alpha<-2\).
    The defining equation \eqref{eq:malpha-definition} for \(m_\alpha\) becomes
    \begin{equation}
        \label{eq:malpha-polynomial}
        (m_\alpha+1)^{2\alpha+1}-(2\alpha+1)m_\alpha-1=0.
    \end{equation}
    This is a monic polynomial with integer coefficients, so the rational
    root theorem implies that any rational root is an integer. In particular
    \(m_\alpha\) is irrational, and dividing \cref{eq:malpha-polynomial} by \(m_\alpha+1\) gives
    \begin{equation*}
        N_\alpha
            =1+(m_\alpha+1)^{2\alpha}
        =2\alpha+2-\frac{2\alpha}{m_\alpha+1},
    \end{equation*}
    which implies that \(N_\alpha\) is irrational as well.
    For $\alpha=1$, we recover $N_1=5$.
\end{remark}

\begin{remark}[Large-\(\alpha\) asymptotics]
    \label{rem:alpha-curvature-asymptotic}
    As \(\alpha\to\infty\), it holds that
    \begin{equation}
        \label{eq:assymptoticsNalpha}
        N_\alpha
        =
        4\alpha+2-\log(4\alpha+2)+o(1).
    \end{equation}
    Indeed, since \(L_\alpha\) is the maximizer in
    \cref{Malpha equivalent forms}, differentiating gives
    \begin{equation*}
        (L_\alpha-1)^{2\alpha}(2\alpha L_\alpha+1)=1,
        \qquad
        N_\alpha=2\alpha L_\alpha+2.
    \end{equation*}
    By \cref{eq:Lalpha-range}, taking logarithms in the first identity
    shows that \(L_\alpha\to2\) and
    \(2-L_\alpha=O(\log\alpha/\alpha)\). Therefore, we have
    \begin{equation*}
        2\alpha(2-L_\alpha)
        =
        -2\alpha\log(L_\alpha-1)+o(1)
        =
        \log(2\alpha L_\alpha+1)+o(1)
        =
        \log(4\alpha+2)+o(1).
    \end{equation*}
    Substitution into \(N_\alpha=4\alpha+2-2\alpha(2-L_\alpha)\)
    gives \cref{eq:assymptoticsNalpha}.
\end{remark}

We next study \(g(z,\phi)\) as \((z,\phi)\) approaches
\((0,j\pi_\alpha)\) and show that its largest possible limiting value
is precisely \(M_\alpha\).
\begin{proposition}\label{thm:blowup}
    It holds that
    \begin{equation*}
        \limsup_{\substack{(z,\phi)\to(0,0)\\ z\neq0}}
        g(z,\phi)
        =
        \max_{(u,v)\in\mathbb S^1}\Phi(u,v)
        =
        \frac{2\alpha+1}
        {\displaystyle\min_{t\in\mathbb R}p_\alpha(t)}
        =
        M_\alpha,
    \end{equation*}
    where
    \begin{equation*}
        \Phi(u,v)
        :=
        \frac{|u+v|^{2\alpha}}
        {\displaystyle\int_0^1|v+\tau u|^{2\alpha}\,d\tau},
        \qquad
        (u,v)\in\mathbb R^2\setminus\{(0,0)\}.
    \end{equation*}
\end{proposition}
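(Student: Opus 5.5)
The plan is a blow-up expansion of \(G\) near \((0,0)\). It rests on an exact integral representation of \(G\) in \(z\), then on leading-order asymptotics in the scaling \(z=ru\), \(\phi=rv\) with \(r\to0\), and finally on an algebraic identification of the limiting profile \(\Phi\) with \((2\alpha+1)/p_\alpha\).

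\emph{Step 1 (integral form).} Write \(s=\sin_\alpha\), \(c=\cos_\alpha\), and use \(c'=-\alpha|s|^{2\alpha-2}s\). Differentiating the definition of \(G\) in \(z\), the two \(c(z+\phi)c(\phi)\) terms cancel, and
\[
  G_z(z,\phi)=\alpha\,|s(z+\phi)|^{2\alpha-2}s(z+\phi)\bigl(z\,c(\phi)+s(\phi)\bigr).
\]
Since \(G(0,\phi)=0\), this gives
\[
  G(z,\phi)=\alpha\int_0^z|s(\tau+\phi)|^{2\alpha-2}s(\tau+\phi)\bigl(\tau c(\phi)+s(\phi)\bigr)\,d\tau .
\]
Hence
\[
  g(z,\phi)=\frac{z\,|s(z+\phi)|^{2\alpha-2}s(z+\phi)\,(z c\phi+s\phi)}{\int_0^z|s(\tau+\phi)|^{2\alpha-2}s(\tau+\phi)(\tau c\phi+s\phi)\,d\tau}.
\]

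\emph{Step 2 (blow-up).} Set \(r=(z^2+\phi^2)^{1/2}\to0\) and \((u,v)=(z,\phi)/r\in\mathbb S^1\). From the energy identity \cref{alpha energy identity} and the ODE, for small \(x\) we have \(s(x)=x(1+O(|x|^{2\alpha}))\) and \(c(x)=1+O(|x|^{2\alpha})\). Hence \(\tau c\phi+s\phi=(\tau+\phi)+O(r^{2\alpha+1})\), and the integrand equals
\[
  |\tau+\phi|^{2\alpha}\bigl(1+O(r^{2\alpha})\bigr)+O\bigl(r^{2\alpha+1}|\tau+\phi|^{2\alpha-1}\bigr).
\]
The main term of the denominator is
\[
  \int_0^z|\tau+\phi|^{2\alpha}\,d\tau = z\,r^{2\alpha}\int_0^1|v+\sigma u|^{2\alpha}\,d\sigma .
\]
Its \(\sigma\)-integral is bounded below by a positive constant on \(\mathbb S^1\), because the integrand is continuous and not identically zero. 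The error is \(O(|z|r^{4\alpha})\), so it is relatively \(o(1)\) uniformly in \((u,v)\). The same bound holds for the numerator, whose error is \(O(|z|r^{4\alpha})\) against a main term \(z|z+\phi|^{2\alpha}\) that may vanish. Because the denominator is \(\gtrsim|z|r^{2\alpha}\), the difference of the quotients is still \(o(1)\). Therefore
\[
  g(z,\phi)=\Phi(u,v)+o(1)
\]
uniformly on \(\mathbb S^1\), using that \(\Phi\) is \(0\)-homogeneous. This yields \(\limsup g=\max_{\mathbb S^1}\Phi\). The upper bound follows from uniformity, and equality follows by letting \(r\to0\) along the ray through a maximizer of \(\Phi\) with \(u\neq0\). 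If the maximizer had \(u=0\), continuity of \(\Phi\) lets us approach it by directions with \(u\neq0\). I expect this step to be the main obstacle: the error terms must be uniform near directions where \(z+\phi\approx0\) or \(z\approx0\), which is exactly why I would compare the errors with the lower bound \(|z|r^{2\alpha}\) on the denominator rather than pointwise with the integrand.

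\emph{Step 3 (identification).} For \(u\neq0\),
\[
  \int_0^1|v+\tau u|^{2\alpha}\,d\tau=\frac{|u+v|^{2\alpha}(u+v)-|v|^{2\alpha}v}{(2\alpha+1)u}.
\]
When \(u+v\neq0\), putting \(t=v/(u+v)\) gives \(\Phi(u,v)=(2\alpha+1)/p_\alpha(t)\), and \(\Phi=0\) when \(u+v=0\). As \((u,v)\) ranges over the circle with \(u\neq0\) and \(u+v\neq0\), the parameter \(t\) covers \(\mathbb R\setminus\{1\}\). The value \(t=1\) corresponds to \(u=0\), where \(\Phi(0,v)=1=(2\alpha+1)/p_\alpha(1)\) by continuity. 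Hence
\[
  \max_{\mathbb S^1}\Phi=\frac{2\alpha+1}{\min_{\mathbb R}p_\alpha},
\]
and this equals \(M_\alpha\) by \cref{Malpha equivalent forms}.
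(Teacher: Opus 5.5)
Your proposal is correct and takes essentially the same route as the paper. Both arguments write \(G(z,\phi)=z\int_0^1\partial_zG(\tau z,\phi)\,d\tau\) using \(G(0,\phi)=0\), and both obtain a uniform \(O(\lVert(z,\phi)\rVert^{4\alpha})\) error, compared against the lower bound \(I\geq c_\alpha\lVert(z,\phi)\rVert^{2\alpha}\), to get \(g=\Phi+O(\lVert(z,\phi)\rVert^{2\alpha})\); they then use density of directions with \(u\neq0\) and the substitution \(t=v/(u+v)\) giving \(\Phi=(2\alpha+1)/p_\alpha(t)\). The only difference is cosmetic: you expand \(\sin_\alpha\) multiplicatively, whereas the paper controls \(|a|^{2\alpha-2}a-|b|^{2\alpha-2}b\) with a Lipschitz-type bound.
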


\begin{proof}
    The expansions
    \begin{equation*}
        \sin_\alpha(t)
        =
        t+O(|t|^{2\alpha+1}),
        \qquad
        \cos_\alpha(t)
        =
        1+O(|t|^{2\alpha}), \qquad \text{ as } t\to0,
    \end{equation*}
    and the bound
    \begin{equation*}
        \bigl||a|^{2\alpha-2}a-|b|^{2\alpha-2}b\bigr|
        \leq
        (2\alpha-1)(|a|+|b|)^{2\alpha-2}|a-b|
    \end{equation*}
    yield
    \begin{align*}
        z\cos_\alpha\phi+\sin_\alpha\phi
         & =z+\phi
        +O\bigl(\lVert(z,\phi)\rVert^{2\alpha+1}\bigr), \\
        |\sin_\alpha(z+\phi)|^{2\alpha-2}
        \sin_\alpha(z+\phi)
         & =|z+\phi|^{2\alpha-2}(z+\phi)
        +O\bigl(\lVert(z,\phi)\rVert^{4\alpha-1}\bigr).
    \end{align*}
    Since direct differentiation of $G$ gives
    \begin{equation*}
        \partial_zG(z,\phi)
        =
        \alpha|\sin_\alpha(z+\phi)|^{2\alpha-2}
        \sin_\alpha(z+\phi)
        \bigl(
        z\cos_\alpha\phi+\sin_\alpha\phi
        \bigr),
    \end{equation*}
    it follows uniformly as
    \((z,\phi)\to(0,0)\) that
    \begin{equation*}
        \partial_zG(z,\phi)
        =
        \alpha|z+\phi|^{2\alpha}
        +O\bigl(\lVert(z,\phi)\rVert^{4\alpha}\bigr).
    \end{equation*}

    Noting that \(G(0,\phi)=0\), we also find that
    \begin{equation*}
        G(z,\phi)
        =
        z\int_0^1\partial_zG(\tau z,\phi)\,d\tau =\alpha zI(z,\phi)
        +O\bigl(|z|\lVert(z,\phi)\rVert^{4\alpha}\bigr),
    \end{equation*}
    where $I(z,\phi)
        :=
        \int_0^1|\phi+\tau z|^{2\alpha}\,d\tau$. The function \(I\) is positive away from the origin, continuous,
    and homogeneous of degree \(2\alpha\). Compactness of
    \(\mathbb S^1\) gives a constant \(c_\alpha>0\) such that
    \begin{equation*}
        I(z,\phi)
        \geq
        c_\alpha\lVert(z,\phi)\rVert^{2\alpha}.
    \end{equation*}
    Therefore, for \(z\neq0\),
    \begin{equation}\label{g Phi asymptotic}
        g(z,\phi)
        =
        \frac{
            |z+\phi|^{2\alpha}
            +O\bigl(\lVert(z,\phi)\rVert^{4\alpha}\bigr)
        }{
            I(z,\phi)
            +O\bigl(\lVert(z,\phi)\rVert^{4\alpha}\bigr)
        }
        =
        \Phi(z,\phi)
        +O\bigl(\lVert(z,\phi)\rVert^{2\alpha}\bigr).
    \end{equation}
    The function \(\Phi\) is continuous away from the origin and
    homogeneous of degree zero. Since the directions with \(u\neq0\)
    are dense in \(\mathbb S^1\), it follows that
    \begin{equation*}
        \limsup_{\substack{(z,\phi)\to(0,0)\\z\neq0}}
        g(z,\phi)
        =
        \max_{(u,v)\in\mathbb S^1}\Phi(u,v).
    \end{equation*}
    It remains to compute this maximum. For \(u+v\neq0\), set
    \(t=v/(u+v)\). Then
    \begin{equation*}
        I(u,v)
        =
        |u+v|^{2\alpha}
        \int_0^1|t+\tau(1-t)|^{2\alpha}\,d\tau
        =
        \frac{|u+v|^{2\alpha}}{2\alpha+1}p_\alpha(t).
    \end{equation*}
    For \(t\neq1\), the last equality follows by integrating
    \(|r|^{2\alpha}\). If \(t=1\), then \(u=0\), and it follows from
    \(I(0,v)=|v|^{2\alpha}\) and \(p_\alpha(1)=2\alpha+1\).
    Therefore $\Phi(u,v)
        =
        (2\alpha+1)/p_\alpha(t)$.
    When \(u+v=0\), one has \(\Phi(u,v)=0\), so this direction does not
    contribute to the maximum. Conversely, every \(t\in\mathbb R\)
    is obtained from a direction proportional to \((1-t,t)\), and we conclude with
    \cref{Malpha equivalent forms}.
\end{proof}

\subsection{\texorpdfstring{The curvature exponent of the
        $\alpha$-Grushin plane}{The curvature exponent of the alpha-Grushin plane}}

We begin by determining the behavior of \(g(z,\phi)\) as \(z\to0\)
with \(\phi\) fixed.

\begin{lemma}\label{lem:boundary-input}
    For every fixed \(\phi\in\mathbb R\),
    \begin{equation*}
        \lim_{z\to0}g(z,\phi)
        =
        \begin{cases}
            1,         & \sin_\alpha\phi\neq0,       \\
            2\alpha+1, & \phi\in\pi_\alpha\mathbb Z.
        \end{cases}
    \end{equation*}
\end{lemma}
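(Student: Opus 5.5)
Since $G(0,\phi)=0$ for every $\phi$ (directly from the definition, because $\sin_\alpha\phi\cos_\alpha\phi-\cos_\alpha\phi\sin_\alpha\phi=0$), the plan is to write
\[
    G(z,\phi)=z\int_0^1\partial_zG(\tau z,\phi)\,d\tau ,
\]
exactly as in the proof of \cref{thm:blowup}. This gives
\[
    g(z,\phi)=\frac{\partial_zG(z,\phi)}{\int_0^1\partial_zG(\tau z,\phi)\,d\tau}.
\]
The formula for $\partial_zG$ recorded there is
\[
    \partial_zG(z,\phi)=\alpha|\sin_\alpha(z+\phi)|^{2\alpha-2}\sin_\alpha(z+\phi)\bigl(z\cos_\alpha\phi+\sin_\alpha\phi\bigr),
\]
and the whole question is the leading-order behaviour of this quantity as $z\to0$ with $\phi$ fixed. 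I split into the two cases of the statement.

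\emph{Case $\sin_\alpha\phi\neq0$.} Here $\partial_zG(z,\phi)\to\alpha|\sin_\alpha\phi|^{2\alpha}\neq0$ as $z\to0$, and this is continuous in $z$. Hence the numerator and the averaged denominator tend to the same nonzero limit, so $g(z,\phi)\to1$.

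\emph{Case $\phi=j\pi_\alpha$.} By \cref{alpha antiperiodicity}, $\sin_\alpha\phi=0$ and $\cos_\alpha\phi=(-1)^j$, and moreover $\sin_\alpha(z+\phi)=(-1)^j\sin_\alpha z$. Substituting,
\[
    \partial_zG(z,\phi)=\alpha|\sin_\alpha z|^{2\alpha-2}\sin_\alpha z\cdot z ,
\]
since the two signs $(-1)^j$ cancel. Using $\sin_\alpha z=z+O(|z|^{2\alpha+1})$, this equals $\alpha|z|^{2\alpha}\bigl(1+O(|z|^{2\alpha})\bigr)$. Then
\[
    \int_0^1\partial_zG(\tau z,\phi)\,d\tau=\frac{\alpha|z|^{2\alpha}}{2\alpha+1}\bigl(1+O(|z|^{2\alpha})\bigr),
\]
so the ratio tends to $2\alpha+1$. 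This is also consistent with \cref{g Phi asymptotic} at $\phi=0$, since $\Phi(1,0)=2\alpha+1$; one could alternatively just quote that expansion for $j$ even and use antiperiodicity for $j$ odd.

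The only point needing care is the uniformity of the $O(\cdot)$ error inside the $\tau$-integral. It holds because the error is $O(|\tau z|^{4\alpha})\le O(|z|^{4\alpha})$ uniformly in $\tau\in[0,1]$, so it is lower order than $|z|^{2\alpha}$. No real obstacle is expected; the lemma is a direct Taylor computation.
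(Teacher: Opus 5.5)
Your proof is correct and is essentially the paper's argument: for $\sin_\alpha\phi\neq0$ the paper likewise uses $G(0,\phi)=0$ together with $\partial_zG(0,\phi)=\alpha|\sin_\alpha\phi|^{2\alpha}\neq0$. For $\phi\in\pi_\alpha\mathbb Z$ it uses the antiperiodicity identities to get $g(z,\phi)=g(z,0)$ and then quotes the blow-up expansion \cref{g Phi asymptotic} with $\Phi(z,0)=2\alpha+1$, which is the alternative you mention yourself; your direct computation of $\partial_zG(z,j\pi_\alpha)$ and its $\tau$-average is just that expansion made explicit along the ray $\phi=j\pi_\alpha$.
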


\begin{proof}
    If \(\sin_\alpha\phi\neq0\), then \(G(0,\phi)=0\) and $\partial_zG(0,\phi)
        =
        \alpha|\sin_\alpha(\phi)|^{2\alpha}\neq0$. A Taylor expansion in the \(z\)-variable gives
    \begin{equation*}
        g(z,\phi)
        =
        z\frac{\partial_zG(z,\phi)}{G(z,\phi)}
        =
        \frac{
            z\,\partial_zG(0,\phi)+O(z^2)
        }{
            z\,\partial_zG(0,\phi)+O(z^2)
        }
        \longrightarrow 1
    \end{equation*}
    as \(z\to0\). If \(\phi=j\pi_\alpha\) for some \(j\in\mathbb Z\),
    the sign-change identities for \(\sin_\alpha\) and \(\cos_\alpha\)
    imply \(g(z,\phi)=g(z,0)\). Since \(\Phi(z,0)=2\alpha+1\),
    \cref{g Phi asymptotic} gives \(g(z,0)\to2\alpha+1\).
\end{proof}

In the sequel, we will make the following substitutions to ease the notation:
\begin{equation*}
    S=\sin_\alpha(z+\phi),\quad C=\cos_\alpha(z+\phi),
    \quad A=\sin_\alpha\phi,\quad B=\cos_\alpha\phi,
    \quad Q=A+zB.
\end{equation*}
We now show a Riccati-type equation for $g$.
\begin{lemma}[Riccati equation]
    On the set where $G\neq0$, $S\neq0$, and $Q\neq0$, the function $g$ satisfies
    \begin{equation}\label{Riccati Equation}
        \partial_z g
        =
        \alpha z|S|^{2\alpha-2}
        +
        \left(\frac{1}{z}+2\alpha\frac{C}{S}\right)g
        -
        \frac{1}{z}g^2.
    \end{equation}
\end{lemma}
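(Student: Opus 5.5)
The plan is a direct computation. Write \(g=z\,G_z/G\) and differentiate logarithmically, using the closed form of \(\partial_zG\) from the proof of \cref{thm:blowup}. In the notation \(S,C,A,B,Q\) introduced above, that formula reads
\[
    \partial_zG=\alpha\,|S|^{2\alpha-2}S\,Q,
\]
and by the definition of \(G\) we also have
\[
    G=SB-C(zB+A)=SB-CQ.
\]

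First, on the set where \(G\neq0\), I differentiate \(g=zG_z/G\) by the quotient rule:
\[
    \partial_zg=\frac{G_z}{G}+z\frac{G_{zz}}{G}-z\Bigl(\frac{G_z}{G}\Bigr)^2
    =\frac{g}{z}+g\,\frac{G_{zz}}{G_z}-\frac{g^2}{z},
\]
where the middle term uses \(G_z\neq0\). This holds because \(S\neq0\) and \(Q\neq0\) on the stated set. The terms \(g/z\) and \(-g^2/z\) already match \cref{Riccati Equation}. It therefore remains to show
\[
    g\,\frac{G_{zz}}{G_z}=\alpha z|S|^{2\alpha-2}+2\alpha\frac{C}{S}\,g.
\]

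Second, I compute \(G_{zz}/G_z\) as a logarithmic derivative of the product \(|S|^{2\alpha-2}S\cdot Q\). Since \(\partial_zS=C\) and \(\partial_z(|S|^{2\alpha-2}S)=(2\alpha-1)|S|^{2\alpha-2}C\) (which is valid for real \(\alpha\geq1\) when \(S\neq0\)), and since \(\partial_zQ=B\) because \(A,B\) do not depend on \(z\), we get
\[
    \frac{G_{zz}}{G_z}=(2\alpha-1)\frac{C}{S}+\frac{B}{Q}.
\]
Subtracting \(2\alpha\,(C/S)\,g\), the required identity reduces to
\[
    g\Bigl(\frac{B}{Q}-\frac{C}{S}\Bigr)=\alpha z|S|^{2\alpha-2}.
\]

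Third, I substitute \(g=\alpha z|S|^{2\alpha-2}SQ/G\). The left-hand side becomes
\[
    \alpha z|S|^{2\alpha-2}\,\frac{SB-CQ}{G},
\]
and this equals \(\alpha z|S|^{2\alpha-2}\) by the identity \(G=SB-CQ\). That concludes the proof.

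The only delicate point is the bookkeeping. One must confirm the expression for \(\partial_zG\): it follows from \(\dot C=-\alpha|S|^{2\alpha-2}S\), after the \(SB\)-term and the \(\partial_z(CQ)\)-term cancel. One must also observe that the hypotheses \(S\neq0\) and \(Q\neq0\) are exactly what is needed to divide by \(G_z\) and to differentiate \(|S|^{2\alpha-2}S\) for noninteger \(\alpha\). I expect no real obstacle beyond this check.
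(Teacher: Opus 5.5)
Your proposal is correct and follows essentially the same route as the paper: differentiate \(g=zG_z/G\) with the quotient rule, write \(zG_{zz}/G=g\bigl((2\alpha-1)\tfrac{C}{S}+\tfrac{B}{Q}\bigr)\) using \(G_z=\alpha|S|^{2\alpha-2}SQ\), and use \(G=SB-CQ\) to turn \(g\bigl(\tfrac{B}{Q}-\tfrac{C}{S}\bigr)\) into \(\alpha z|S|^{2\alpha-2}\). Your extra remarks on \(G_z\neq0\) and on differentiating \(|S|^{2\alpha-2}S\) for noninteger \(\alpha\) make explicit what the paper leaves implicit.
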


\begin{proof}
    We compute
    \begin{equation*}
        \partial_z g = \frac{g}{z}+z\frac{G_{zz}}{G}-\frac{g^2}{z} = \frac{g-g^2}{z}
        +g\left((2\alpha-1)\frac{C}{S}+\frac{B}{Q}\right) = \frac{g-g^2}{z}
        +g\left(
        2\alpha\frac{C}{S}
        +\frac{\alpha z|S|^{2\alpha-2}}{g}
        \right),
    \end{equation*}
    using \(g=zG_z/G\), the quotient rule, \(G_z=\alpha|S|^{2\alpha-2}SQ\), and \(G=SB-CQ\). A last simplification concludes the proof.
\end{proof}

The proof of \cref{global inequality} proceeds by contradiction.
If \(g(z_1,\phi)>M_\alpha\) at some point, then
\cref{lem:boundary-input} shows that, with \(\phi\) fixed,
\(g(z,\phi)<M_\alpha\) whenever \(0<|z|\) is sufficiently small.
Thus, as \(|z|\) increases from zero toward \(|z_1|\) along the same
half-interval, \(g\) has a first contact with the level \(M_\alpha\) at
some \(z\), where
\(g(z,\phi)=M_\alpha\) and
\(z\partial_zg(z,\phi)\geq0\).
Although the signs of \(A,B,S,C,Q\) vary with \((z,\phi)\), the
Riccati equation restricts them at such a first-contact point to the
three relations stated below.

\begin{lemma}[Crossing sign identities]
    Assume that, for some fixed \(\phi\), there exists
    \(z\) with \(0<|z|<\pi_\alpha\) such that $g(z,\phi)=M_\alpha$ and $z\partial_zg(z,\phi)\geq0$.
    Then, it holds that
    \begin{equation}\label{main sign reduction}
        z\frac{C}{S}>0,\qquad
        \frac{A}{Q}<0,\qquad
        z\frac{B}{Q}>0.
    \end{equation}
\end{lemma}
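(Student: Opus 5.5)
The plan is to turn the two available pieces of information at the contact point, namely the value \(g=M_\alpha\) and the derivative sign \(z\partial_zg\geq0\), into two linear relations. These relations involve only the quantities
\[
    a:=z\frac{C}{S},\qquad b:=z\frac{B}{Q}.
\]
The identity \(A/Q=1-b\), which holds because \(Q=A+zB\), then shows that the three claims in \cref{main sign reduction} amount to \(a>0\), \(b>1\) (which gives \(A/Q<0\)) and \(b>0\). Throughout I work where \(S\neq0\), \(Q\neq0\) and \(G\neq0\), as in the Riccati lemma. If \(S\) or \(Q\) vanishes then \(G_z=\alpha|S|^{2\alpha-2}SQ=0\), so \(g=0\neq M_\alpha\); these degenerate cases therefore cannot occur at a contact point.

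\emph{First relation, from the value of \(g\).} Using \(G_z=\alpha|S|^{2\alpha-2}SQ\) and \(G=SB-CQ\), I invert \(g=zG_z/G\):
\[
    \frac1g=\frac{SB-CQ}{\alpha z|S|^{2\alpha-2}SQ}
    =\frac{b-a}{\alpha z^2|S|^{2\alpha-2}}.
\]
At the contact point \(g=M_\alpha>0\), so this gives \(b-a=\alpha z^2|S|^{2\alpha-2}/M_\alpha>0\), that is, \(b>a\).

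\emph{Second relation, from the derivative sign.} I use the intermediate form of the Riccati computation, \(z\partial_zg=g(1-g)+g\bigl((2\alpha-1)a+b\bigr)\). Evaluating at \(g=M_\alpha\) and using \(z\partial_zg\geq0\) gives
\[
    (2\alpha-1)a+b\geq M_\alpha-1 .
\]
Equivalently, substituting \(b-a\) from the first relation, which is the form of \cref{Riccati Equation}, one gets
\[
    2\alpha M_\alpha a\geq M_\alpha^2-M_\alpha-\alpha z^2|S|^{2\alpha-2}.
\]

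\emph{Deducing the signs.} For \(a>0\) it suffices, by the last inequality, to show \(\alpha z^2|S|^{2\alpha-2}<M_\alpha(M_\alpha-1)\). I would bound the left side using \(|S|\leq1\), which follows from \cref{alpha energy identity}, and \(|z|<\pi_\alpha\leq\pi\). The bound on \(\pi_\alpha\) comes from \(\pi_\alpha=2\int_0^1(1-s^{2\alpha})^{-1/2}\,ds\), which is nonincreasing in \(\alpha\) and equals \(\pi\) when \(\alpha=1\). For the right side I use \(M_\alpha\geq\frac43(2\alpha+1)\) from \cref{Malpha equivalent forms}. It then remains to check
\[
    \alpha\pi^2<\tfrac43(2\alpha+1)\bigl(\tfrac43(2\alpha+1)-1\bigr)\qquad\text{for }\alpha\geq1 .
\]
At \(\alpha=1\) this reads \(9.87<12\), and the right side grows quadratically while the left grows linearly, so it holds for all \(\alpha\geq1\).

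Next, suppose \(b\leq1\). Since \(b>a\), this forces \(a<1\), and hence \((2\alpha-1)a+b<2\alpha\). But \(M_\alpha-1\geq\frac43(2\alpha+1)-1>2\alpha\), which contradicts the second relation. Therefore \(b>1\). This yields both \(zB/Q>0\) and \(A/Q=1-b<0\).

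The main obstacle is the inequality \(a>0\). It is the only step that needs quantitative information beyond the algebraic identities, namely the bound \(\pi_\alpha\leq\pi\) and the lower bound on \(M_\alpha\). If the estimate \(\pi_\alpha\leq\pi\) turned out to be awkward to justify, my first fallback would be the explicit integral formula for \(\pi_\alpha\) from the energy identity, together with the crude bound \(1-s^{2\alpha}\geq1-s^2\).
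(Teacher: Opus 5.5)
Your proof is correct and follows essentially the same route as the paper. The level equation gives $zB/Q-zC/S=\alpha z^2|S|^{2\alpha-2}/M_\alpha$, and the Riccati identity at the contact point gives $(2\alpha-1)zC/S+zB/Q\geq M_\alpha-1$. The bounds $|S|\le1$, $|z|<\pi_\alpha\le\pi$ and $M_\alpha\ge\frac43(2\alpha+1)$ then yield $zC/S>0$ and $zB/Q>1$. Your justification of $\pi_\alpha\le\pi$ via $\pi_\alpha=2\int_0^1(1-s^{2\alpha})^{-1/2}\,ds$ supplies a step the paper only asserts.

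The one thing to tighten is the ``quadratic versus linear growth'' remark. State instead that the difference $\frac19\bigl(64\alpha^2+(40-9\pi^2)\alpha+4\bigr)$ is positive at $\alpha=1$ and increasing on $[1,\infty)$.
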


\begin{proof}
    At such a point, the quantities \(G\), \(S\),
    and \(Q\) are nonzero. Indeed, \(G\neq0\) since
    \(0<|z|<\pi_\alpha\), while
    \(S=0\) or \(Q=0\) would imply \(g=0\), contradicting
    \(g=M_\alpha>0\). Evaluating the Riccati equation \cref{Riccati Equation} at
    \(g=M_\alpha\) and using \(z\partial_zg\geq0\) gives
    \begin{equation*}
        2\alpha z\frac{C}{S}
        +
        \frac{\alpha z^2|S|^{2\alpha-2}}{M_\alpha}
        \geq
        M_\alpha-1.
    \end{equation*}
    The level equation \(g=M_\alpha\) reads $M_\alpha(SB-CQ)
        =
        \alpha z|S|^{2\alpha-2}SQ$. Dividing by \(SQ\) and multiplying by \(z/M_\alpha\), we obtain
    \begin{equation*}
        z\frac{B}{Q}
        =
        z\frac{C}{S}
        +
        \frac{\alpha z^2|S|^{2\alpha-2}}{M_\alpha},
        \qquad
        \frac{A}{Q}
        =
        1-z\frac{B}{Q},
    \end{equation*}
    where the second identity uses \(Q=A+zB\). Since
    \(|z|<\pi_\alpha\leq\pi\), \(|S|\leq1\), and
    \cref{Malpha equivalent forms} holds, we find
    \begin{equation*}
        0\leq
        \frac{\alpha z^2|S|^{2\alpha-2}}{M_\alpha}
        \leq
        \frac{\alpha\pi^2}{M_\alpha}
        <
        \frac{12\alpha}{M_\alpha}\leq
        \frac{4(2\alpha+1)(8\alpha+1)}{9M_\alpha}
        \leq
        M_\alpha-1.
    \end{equation*}
    Consequently,
    \begin{equation*}
        z\frac{C}{S}
        \geq
        \frac{1}{2\alpha}
        \left(
        M_\alpha-1
        -\frac{\alpha z^2|S|^{2\alpha-2}}{M_\alpha}
        \right)
        >0.
    \end{equation*}
    Finally, since \cref{Malpha equivalent forms} gives
    \(M_\alpha>2\alpha+1\),
    \begin{equation*}
        z\frac{B}{Q}
        \geq
        \frac{1}{2\alpha}
        \left(
        2\alpha z\frac{C}{S}
        +\frac{\alpha z^2|S|^{2\alpha-2}}{M_\alpha}
        \right)
        \geq
        \frac{M_\alpha-1}{2\alpha}
        >1.
    \end{equation*}
    Hence \(A/Q=1-zB/Q<0\), which proves
    \cref{main sign reduction}.
\end{proof}
Next, we record an estimate that will be used below.
\begin{lemma}\label{tangent excess lemma}
    For \(0<r<\pi_\alpha/2\), one has
    \begin{equation}\label{tangent inequality}
        \tan_\alpha r-r
        >
        \frac{\alpha}{2\alpha+1}r^{2\alpha+1}.
    \end{equation}
\end{lemma}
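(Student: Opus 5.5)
The plan is to prove \cref{tangent inequality} by differentiating. Set
\[
    f(r):=\tan_\alpha r-r-\frac{\alpha}{2\alpha+1}r^{2\alpha+1},
    \qquad 0\leq r<\pi_\alpha/2 .
\]
Then \(f(0)=0\), so it suffices to show \(f'(r)>0\) on \((0,\pi_\alpha/2)\).

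\textbf{Derivative of \(\tan_\alpha\).} On \((0,\pi_\alpha/2)\) we have \(\sin_\alpha r>0\) and \(\cos_\alpha r>0\). Indeed, \(\sin_\alpha\) increases from \(0\) until \(\cos_\alpha\) first vanishes, which happens at \(\pi_\alpha/2\) by the symmetry of the ODE \(\ddot y=-\alpha|y|^{2\alpha-2}y\). Using \(\cos_\alpha'=-\alpha|\sin_\alpha|^{2\alpha-2}\sin_\alpha\) and the energy identity \cref{alpha energy identity}, the quotient rule gives
\[
    \tan_\alpha'
    =\frac{\cos_\alpha^2+\alpha\sin_\alpha^{2\alpha}}{\cos_\alpha^2}
    =1+\alpha\,\frac{\sin_\alpha^{2\alpha}}{\cos_\alpha^2}.
\]
Hence
\[
    f'(r)=\alpha\left(\frac{\sin_\alpha(r)^{2\alpha}}{\cos_\alpha(r)^2}-r^{2\alpha}\right),
\]
and \(f'(r)>0\) is equivalent to \(k(r)>r\), where
\[
    k(r):=\sin_\alpha(r)\,\cos_\alpha(r)^{-1/\alpha}.
\]

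\textbf{Key step: \(k(r)>r\).} The naive comparisons (concavity gives \(\sin_\alpha r<r\), and \(\cos_\alpha\leq1\)) point the wrong way, so the proof must use exactly this combination \(k\). Differentiate \(k\), again using \(\cos_\alpha'=-\alpha\sin_\alpha^{2\alpha-1}\) on this interval:
\[
    k'=\cos_\alpha^{1-1/\alpha}+\sin_\alpha^{2\alpha}\cos_\alpha^{-1/\alpha-1}
    =\cos_\alpha^{-1/\alpha-1}\bigl(\cos_\alpha^2+\sin_\alpha^{2\alpha}\bigr)
    =\cos_\alpha^{-1/\alpha-1}.
\]
The last equality is \cref{alpha energy identity}. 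For \(0<r<\pi_\alpha/2\) we have \(0<\cos_\alpha r<1\), since \(\sin_\alpha r\neq0\). Therefore \(k'(r)>1\). Together with \(k(0)=0\), this gives \(k(r)>r\) on the interval.

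\textbf{Conclusion.} Raising \(k(r)>r>0\) to the power \(2\alpha\) gives \(\sin_\alpha^{2\alpha}/\cos_\alpha^2>r^{2\alpha}\), so \(f'>0\) on \((0,\pi_\alpha/2)\). Since \(f(0)=0\), we get \(f(r)>0\), which is \cref{tangent inequality}.

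The only genuine obstacle is the key step: once \(k\) is chosen, the energy identity collapses its derivative to a single power of \(\cos_\alpha\), and everything else is routine. The remaining care is the regularity of \(\sin_\alpha\) for noninteger \(\alpha\). This is harmless here, because all quantities stay positive on the open interval, so the absolute values in the ODE can be dropped.
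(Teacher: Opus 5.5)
Your proposal is correct and follows essentially the same route as the paper. Both compute \((\tan_\alpha t-t)'=\alpha\sin_\alpha^{2\alpha}t/\cos_\alpha^2t\) and use the energy identity to get \(\bigl(\sin_\alpha t\,(\cos_\alpha t)^{-1/\alpha}\bigr)'=(\cos_\alpha t)^{-1-1/\alpha}>1\), which gives \(\sin_\alpha t/(\cos_\alpha t)^{1/\alpha}>t\); the bound \((\tan_\alpha t-t)'>\alpha t^{2\alpha}\) then integrates to the claim.
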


\begin{proof}
    We compute
    \begin{equation*}
        (\tan_\alpha t-t)'
        =
        \alpha\frac{\sin^{2\alpha}_\alpha t}{\cos_\alpha^2 t}, \quad \text{ and } \quad \left(\frac{\sin_\alpha t}{(\cos_\alpha t)^{1/\alpha}}\right)'
        =
        (\cos_\alpha t)^{-1-1/\alpha}>1,
    \end{equation*}
    for \(0<t<\pi_\alpha/2\). Hence
    \(\sin_\alpha t/(\cos_\alpha t)^{1/\alpha}>t\), and therefore
    \((\tan_\alpha t-t)'>\alpha t^{2\alpha}\). Integrating from \(0\)
    to \(r\) completes the proof.
\end{proof}

The sign relations in \cref{main sign reduction} imply that there is a
unique \(j\in\mathbb Z\) such that
\begin{equation*}
    \min\{\phi,z+\phi\}
    <
    j\pi_\alpha
    <
    \max\{\phi,z+\phi\},
\end{equation*}
or equivalently
\((\phi-j\pi_\alpha)(z+\phi-j\pi_\alpha)<0\). The next lemma shows
that the corresponding coordinates
\begin{equation*}
    r:=|\phi-j\pi_\alpha|,
    \qquad
    s:=|z+\phi-j\pi_\alpha|
\end{equation*}
belong to \((0,\pi_\alpha/2)\) and satisfy \(|z|=r+s\). The expansions in the proofs of \cref{thm:blowup} and
\cref{lem:boundary-input} show that \(G(z,\phi)/z>0\) for
sufficiently small \(z\neq0\). Since \(G(\cdot,\phi)\) does not vanish
for \(0<|z|<\pi_\alpha\), it follows that
\(\operatorname{sgn}G(z,\phi)=\operatorname{sgn}z\) throughout this
domain. In the coordinates
above, this yields the factorization
\begin{equation}\label{crossing factorization}
    |G(z,\phi)|(M_\alpha-g(z,\phi))
    =
    \cos_\alpha(r)\Lambda_\alpha(r,s)E(r,s),
\end{equation}
where
\begin{equation*}
    \Lambda_\alpha(r,s)
    :=
    M_\alpha\cos_\alpha(s)
    +
    \alpha(r+s)\sin_\alpha(s)^{2\alpha-1},
    \qquad
    E(r,s)
    :=
    \tan_\alpha(r)
    +
    \frac{M_\alpha\sin_\alpha(s)}{\Lambda_\alpha(r,s)}
    -(r+s).
\end{equation*}
Recall that both \(\cos_\alpha(r)\) and \(\Lambda_\alpha(r,s)\) are positive on
\((0,\pi_\alpha/2)\). Thus the contradiction hypothesis would force \(E(r,s)=0\), whereas
we will end up proving that \(E(r,s)>0\) on
\((0,\pi_\alpha/2) \times (0,\pi_\alpha/2)\).

\begin{lemma}\label{Crossing Normal Form}
    Assume that \(0<|z|<\pi_\alpha\) is such that $g(z,\phi)=M_\alpha$ and $z\partial_zg(z,\phi)\geq0$.
    Then there exist \(r,s\in(0,\pi_\alpha/2)\), with
    \(|z|=r+s\), such that \(E(r,s)=0\).
\end{lemma}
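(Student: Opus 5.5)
The plan is to start from the crossing sign identities \cref{main sign reduction} and turn them into statements about the location of \(\phi\) and \(z+\phi\) relative to the lattice \(\pi_\alpha\mathbb Z\). The periodicity and antiperiodicity \cref{alpha antiperiodicity} show that \(g(z,\phi)=g(z,\phi-j\pi_\alpha)\) for every \(j\in\mathbb Z\). The reflection \((z,\phi)\mapsto(-z,-\phi)\), under which \(\sin_\alpha\) is odd and \(\cos_\alpha\) is even, leaves \(G(z,\phi)/z\), and hence \(g\), unchanged. Using these symmetries, I reduce to the case \(z>0\) and normalize the phase. The relations \(zC/S>0\) and \(zB/Q>0\) then say that \(C/S>0\) and \(B/Q>0\). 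Since \(Q=A+zB\), the relation \(A/Q<0\) forces \(A\) and \(zB\) to have opposite signs, with \(|zB|>|A|\); equivalently \(A/B<0\). Thus \(\tan_\alpha\phi<0\) while \(\tan_\alpha(z+\phi)>0\).

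Next I locate the crossing point. On a half-period of length \(\pi_\alpha\), the sign pattern of \(\tan_\alpha\) is \((+,-)\) on \((0,\pi_\alpha/2)\cup(\pi_\alpha/2,\pi_\alpha)\). Because \(0<z<\pi_\alpha\), going from a point where \(\tan_\alpha<0\) to a point where \(\tan_\alpha>0\) requires crossing either a zero \(j\pi_\alpha\) or a pole \(j\pi_\alpha+\pi_\alpha/2\) of \(\tan_\alpha\), but not both. I rule out the pole crossing using the finer sign information: \(B/Q>0\) together with \(C/S>0\) fixes which sign \(Q\) carries relative to \(B\) and \(S\). Concretely, after translating so that \(\phi\in(-\pi_\alpha/2,0)\), one has \(A<0<B\), so \(Q>0\). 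Then \(C/S>0\) with \(z+\phi\in(-\pi_\alpha/2+\cdot,\,\pi_\alpha)\) leaves only \(z+\phi\in(0,\pi_\alpha/2)\), since the interval \((\pi_\alpha/2,\pi_\alpha)\) gives \(C<0<S\). This yields a unique \(j\) (here \(j=0\)) with \((\phi-j\pi_\alpha)(z+\phi-j\pi_\alpha)<0\). Setting \(r=|\phi-j\pi_\alpha|\) and \(s=|z+\phi-j\pi_\alpha|\), both lie in \((0,\pi_\alpha/2)\) and satisfy \(z=r+s\).

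Finally I verify the factorization \cref{crossing factorization} in these coordinates. With \(\phi=-r\) and \(z+\phi=s\), oddness gives \(A=-\sin_\alpha r\), \(B=\cos_\alpha r\), \(S=\sin_\alpha s\), \(C=\cos_\alpha s\), and \(Q=(r+s)\cos_\alpha r-\sin_\alpha r\). Substituting into \(|G|(M_\alpha-g)=\operatorname{sgn}(z)\,(M_\alpha G-zG_z)\) and using \(G=SB-CQ\) and \(G_z=\alpha|S|^{2\alpha-2}SQ\), I collect the terms so that the expression reads \(\cos_\alpha r\) times \(\bigl[M_\alpha\sin_\alpha s-\Lambda_\alpha(r,s)\bigl((r+s)-\tan_\alpha r\bigr)\bigr]\). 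This equals \(\cos_\alpha(r)\Lambda_\alpha(r,s)E(r,s)\). Since \(\cos_\alpha r>0\) and \(\Lambda_\alpha(r,s)>0\) for \(r,s\in(0,\pi_\alpha/2)\), the hypothesis \(g=M_\alpha\) forces \(E(r,s)=0\).

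I expect the main obstacle to be the sign bookkeeping in the second step, namely excluding the configurations that cross a pole of \(\tan_\alpha\), and making sure the symmetry reductions are applied consistently with \(\operatorname{sgn}G=\operatorname{sgn}z\). The algebraic identity in the final step is routine once the normalization is fixed.
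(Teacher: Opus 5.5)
Your proposal is correct and follows essentially the same route as the paper. You use the sign relations \cref{main sign reduction} to place \(\phi\) and \(z+\phi\) on opposite sides of a lattice point \(j\pi_\alpha\), each within distance \(\pi_\alpha/2\) of it, define \(r,s\) from this, and substitute into \(M_\alpha G-zG_z\) to get \(\cos_\alpha(r)\Lambda_\alpha(r,s)E(r,s)\). The differences are cosmetic: you reduce to \(z>0\) via the reflection \((z,\phi)\mapsto(-z,-\phi)\) instead of carrying \(\operatorname{sgn}(z)\), and you obtain \(s\in(0,\pi_\alpha/2)\) directly from the sign of \(\tan_\alpha(z+\phi)\) on \((-\pi_\alpha/2,\pi_\alpha)\), rather than from \(|z|>\tan_\alpha r>r\) and \cref{tangent excess lemma}.
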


\begin{proof}
    The first two relations in \cref{main sign reduction} give
    \begin{equation*}
        \operatorname{sgn}A
        =
        -\operatorname{sgn}(z)\operatorname{sgn}B.
    \end{equation*}
    Hence there are unique \(j\in\mathbb Z\) and
    \(r\in(0,\pi_\alpha/2)\) such that
    \(\phi=j\pi_\alpha-\operatorname{sgn}(z)r\). By
    \cref{alpha antiperiodicity},
    \begin{equation*}
        A=-\operatorname{sgn}(z)(-1)^j\sin_\alpha r,
        \qquad
        B=(-1)^j\cos_\alpha r,
        \qquad
        Q
        =
        \operatorname{sgn}(z)(-1)^j\cos_\alpha r
        \bigl(|z|-\tan_\alpha r\bigr).
    \end{equation*}
    The relation \(zB/Q>0\) and \cref{tangent excess lemma} give
    \(|z|>\tan_\alpha r>r\). Thus, setting \(s:=|z|-r\),
    \begin{equation*}
        z+\phi
        =
        j\pi_\alpha+\operatorname{sgn}(z)s,
        \qquad
        |z|=r+s,
        \qquad
        0<s<\pi_\alpha.
    \end{equation*}
    Applying \cref{alpha antiperiodicity} again gives
    \begin{equation*}
        S=\operatorname{sgn}(z)(-1)^j\sin_\alpha s,
        \qquad
        C=(-1)^j\cos_\alpha s.
    \end{equation*}
    The remaining relation in \cref{main sign reduction} becomes
    \(zC/S=|z|\cot_\alpha s>0\), so
    \(s\in(0,\pi_\alpha/2)\). Direct substitution into \(g=zG_z/G\) then gives
    \begin{equation*}
        g(z,\phi)
        =
        \frac{
            \alpha(r+s)\sin_\alpha(s)^{2\alpha-1}
            \bigl(r+s-\tan_\alpha r\bigr)
        }{
            \sin_\alpha s
            -\cos_\alpha s\bigl(r+s-\tan_\alpha r\bigr)
        }.
    \end{equation*}
    This calculation also gives \cref{crossing factorization}.
    Since \(g(z,\phi)=M_\alpha\), rearranging yields
    \begin{equation*}
        r+s-\tan_\alpha r
        =
        \frac{M_\alpha\sin_\alpha s}{\Lambda_\alpha(r,s)},
    \end{equation*}
    which is precisely \(E(r,s)=0\).
\end{proof}

To prove that \(E\) is positive, we use the decomposition
\begin{equation}\label{E decomposition}
    E(r,s)
    =
    \bigl(\tan_\alpha r-r\bigr)
    -
    \left[
        s-
        \frac{M_\alpha\sin_\alpha s}
        {M_\alpha\cos_\alpha s+\alpha(r+s)\sin_\alpha(s)^{2\alpha-1}}
        \right].
\end{equation}
The strict inequality in \cref{tangent excess lemma} controls the first term in \cref{E decomposition} and is essential: it ultimately gives \(E(r,s)>0\), rather than merely \(E(r,s)\geq0\), which closes the contradiction argument. To estimate
the bracketed term, for fixed \(s\in(0,\pi_\alpha/2)\) define
\begin{equation}\label{Psis definition}
    \Psi_s(x)
    :=
    x s^{2\alpha}
    -\frac{\alpha}{2\alpha+1}s^{2\alpha+1}
    -s
    +\frac{\sin_\alpha(s)}
    {\cos_\alpha(s)+x\sin_\alpha(s)^{2\alpha-1}},
    \qquad x\geq0.
\end{equation}
At \(x=\alpha(r+s)/M_\alpha\), the inequality
\(\Psi_s(x)\geq0\) is exactly
\begin{equation}\label{E second term estimate}
    s-
    \frac{M_\alpha\sin_\alpha s}
    {M_\alpha\cos_\alpha s+\alpha(r+s)\sin_\alpha(s)^{2\alpha-1}}
    \leq
    \alpha s^{2\alpha}
    \left(\frac{r+s}{M_\alpha}-\frac{s}{2\alpha+1}\right),
\end{equation}
which is the bound needed for the second term in \cref{E decomposition}.
In the proof of \cref{second deficit estimate lemma}, the
arithmetic--geometric mean inequality will give the lower bound
\(\Psi_s(x)\geq s\mathcal H_\alpha(s)\), where
\begin{equation*}
    \mathcal H_\alpha(s)
    :=
    2\left(\frac{s}{\sin_\alpha s}\right)^{\alpha-1}
    -
    \cos_\alpha s
    \left(\frac{s}{\sin_\alpha s}\right)^{2\alpha-1}
    -
    1
    -
    \frac{\alpha}{2\alpha+1}s^{2\alpha}.
\end{equation*}
We thus first prove that
\(\mathcal H_\alpha(s)\geq0\). When \(\alpha=1\), this reduces to
\(1-s\cot s-s^2/3\geq0\) on \((0,\pi/2)\).

\begin{lemma}\label{H inequality}
    If \(0<s<\pi_\alpha/2\), then \(\mathcal H_\alpha(s)\geq0\).
\end{lemma}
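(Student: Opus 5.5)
The plan is to rescale $\mathcal H_\alpha$ by a positive factor, show that the rescaled function vanishes at $s=0$, and then show it is nondecreasing, reducing everything to the ODE $\sin_\alpha''=-\alpha\sin_\alpha^{2\alpha-1}$ and the energy identity \cref{alpha energy identity}.

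Put $u(s):=\sin_\alpha(s)/s$. Since $\sin_\alpha''=-\alpha\sin_\alpha^{2\alpha-1}<0$ on $(0,\pi_\alpha/2)$ and $\sin_\alpha(0)=0$, $\sin_\alpha'(0)=1$, we have $0<u\leq1$ there. The proof of \cref{tangent excess lemma} also gives the lower bound $\sin_\alpha s>s(\cos_\alpha s)^{1/\alpha}$, that is, $u^\alpha>\cos_\alpha s$. Multiplying $\mathcal H_\alpha$ by $u^{2\alpha-1}>0$, it suffices to prove
\begin{equation*}
    K(s):=2u^{\alpha}-\cos_\alpha s-\Bigl(1+\tfrac{\alpha}{2\alpha+1}s^{2\alpha}\Bigr)u^{2\alpha-1}\geq0,
    \qquad 0<s<\tfrac{\pi_\alpha}{2}.
\end{equation*}

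First I will check the behaviour at $s=0$. The expansions $\sin_\alpha s=s-\tfrac{1}{2(2\alpha+1)}s^{2\alpha+1}+o(s^{2\alpha+1})$ and $\cos_\alpha s=1-\tfrac12 s^{2\alpha}+o(s^{2\alpha})$ follow from the ODE. Substituting them shows that $K(0^+)=0$ and that the $s^{2\alpha}$ coefficient also cancels. Indeed, for $\alpha=1$ one gets $1-s\cot s-s^2/3=s^4/45+O(s^6)$. So the inequality is of second order at the origin, and a crude Taylor bound will not suffice.

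Second, I will show $K'(s)\geq0$. Using $u'=(\cos_\alpha s-u)/s$ and $\cos_\alpha'=-\alpha\sin_\alpha^{2\alpha-1}=-\alpha s^{2\alpha-1}u^{2\alpha-1}$, the derivative $sK'(s)$ becomes an explicit expression in the three quantities $u$, $c:=\cos_\alpha s$ and $\sigma:=s^{2\alpha}$. These are tied together by the energy identity $c^2+\sigma u^{2\alpha}=1$ and by the constraint $u^\alpha>c$. I will eliminate $\sigma=(1-c^2)/u^{2\alpha}$ and then try to prove nonnegativity of the resulting two-variable function of $(u,c)$ on the admissible region $\{0<c<u^\alpha\leq1\}$. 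The term $2u^\alpha$ should be handled with the same AM--GM pairing that produces $\mathcal H_\alpha$, namely $2u^\alpha\geq u^{2\alpha-1}+u$, used in reverse as a slack estimate.

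Third, in case monotonicity fails on part of the interval, I will fall back to a split. On $[s_*,\pi_\alpha/2)$, where $\cos_\alpha$ is small, I will check $K$ directly, using bounds on $\pi_\alpha$ obtained from $\pi_\alpha/2=\int_0^1(1-v^{2\alpha})^{-1/2}\,dv$. At the endpoint this requires
\begin{equation*}
    2(\pi_\alpha/2)^{\alpha-1}\geq1+\tfrac{\alpha}{2\alpha+1}(\pi_\alpha/2)^{2\alpha},
\end{equation*}
which I would verify from the integral representation of $\pi_\alpha$. On the remaining initial interval, the monotonicity argument should apply.

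I expect the main obstacle to be the sign of $K'$, or equivalently the two-variable inequality in $(u,c)$. Because $K$ vanishes to high order at the origin, the sign must come from an exact cancellation. The key is to choose the right combination of the energy identity and the bound $u^\alpha>c$ so that what remains is a manifestly nonnegative polynomial-type expression.
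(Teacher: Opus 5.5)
\textbf{The gap: the sign of $K'$.} Your rescaling to $K$ and your expansions at the origin are correct. The decisive step, the sign of $K'$, is left open, and the route you propose for it cannot work as stated. Eliminating $\sigma=(1-c^2)u^{-2\alpha}$ as you suggest gives
\[
sK'(s)=(c-u)\Bigl[2\alpha u^{\alpha-1}-(2\alpha-1)u^{2\alpha-2}-\frac{\alpha(2\alpha-1)}{2\alpha+1}\,\frac{1-c^2}{u^{2}}\Bigr]+\frac{\alpha}{2\alpha+1}\,\frac{1-c^2}{u}.
\]
This function is \emph{not} nonnegative on $\{0<c<u^\alpha\le1\}$. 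At $u=1$ it equals $(1-c)\bigl[-1+\frac{\alpha}{2\alpha+1}\bigl((2\alpha-1)(1-c^2)+1+c\bigr)\bigr]$, and the bracket tends to $-1/(2\alpha+1)$ as $c\to1^-$. For $\alpha=1$ the value is $(1-c)(c-c^2-1)/3<0$ for every $c\in(0,1)$, and interior points such as $(u,c)=(0.99,0.5)$ also give a negative value.

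The reason is that the energy identity and $u^\alpha>c$ do not encode the ODE link between $u=\sin_\alpha(s)/s$ and $c=\cos_\alpha s$. Near $(1,1)$ one has $sK'\approx(1-u)-\frac{1-c}{2\alpha+1}$. The first-order cancellation you observed requires $1-c\approx(2\alpha+1)(1-u)$, which comes from $\sin_\alpha s=s-\frac{s^{2\alpha+1}}{2(2\alpha+1)}+\cdots$. Your two relations only force $1-c\gtrsim\alpha(1-u)$, so no combination of them can produce a manifestly nonnegative expression. The fallback split does not escape this, since it still needs monotonicity on an initial interval, which is exactly where the degeneracy sits. Also, the AM--GM inequality you quote is reversed: $u^{2\alpha-1}+u\ge 2u^\alpha$. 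Since $2u^\alpha$ enters $K$ with a plus sign, this only bounds $K$ from above and gives no slack.

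\textbf{How to close it.} What is missing is a way to feed the ODE back in, and global monotonicity is more than you need. The paper passes to $\lambda=\sin_\alpha(s)^{2\alpha}$ and $R(\lambda)=s/\sin_\alpha s=\int_0^1(1-\lambda\tau^{2\alpha})^{-1/2}\,d\tau$, which satisfies the exact equation \cref{R differential identity}. It splits off $(R^{\alpha-1}-1)(1-\sqrt{1-\lambda}\,R^\alpha)$, which is nonnegative precisely by your bound $u^\alpha\ge c$. It then runs a first-zero argument on $P_\alpha(\lambda)$, starting from its $\lambda^2$ expansion.

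Your own setup can be repaired in the same spirit. It suffices to show $sK'>0$ at zeros of $K$, and the relation $K=0$, that is $\frac{\alpha}{2\alpha+1}(1-c^2)u^{-1}=2u^\alpha-c-u^{2\alpha-1}$, is exactly the missing constraint. Substituting it into the formula above gives
\[
u\,sK'(s)=\bigl(u^\alpha-c\bigr)\bigl(2\alpha u-u^\alpha-(2\alpha-1)c\bigr)\qquad\text{wherever }K(s)=0.
\]
This is positive on your region, because the second factor exceeds $2\alpha(u-u^\alpha)\ge0$. You still need $K>0$ near $s=0^+$, which requires the next coefficient you did not compute. It is $K=\frac{(\alpha+1)(4\alpha^2-\alpha-1)}{4(2\alpha+1)^2(4\alpha+1)}\,s^{4\alpha}+o(s^{4\alpha})$, with a positive leading coefficient. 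With these two facts, the first-zero argument closes the proof.
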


\begin{proof}
    For $\lambda \in (0,1)$, we set
    \[
        R(\lambda)
        :=
        \int_0^1
        \frac{d\tau}{\sqrt{1-\lambda\tau^{2\alpha}}}, \qquad P_\alpha(\lambda)
        :=
        1-\sqrt{1-\lambda}\,R(\lambda)
        -
        \frac{\alpha}{2\alpha+1}
        \lambda R(\lambda)^{\alpha+1}.
    \]
    Taking \(\lambda=\sin_\alpha(s)^{2\alpha}\in(0,1)\), and using
    \cref{alpha energy identity}, we obtain $R(\lambda)
        = s/\sin_\alpha s \geq 1$ and
    \begin{equation} \label{H decomposition}
        \mathcal H_\alpha(s)
        =
        R(\lambda)^{\alpha-1}P_\alpha(\lambda)
        +
        \bigl(R(\lambda)^{\alpha-1}-1\bigr)
        \bigl(1-\sqrt{1-\lambda}\,R(\lambda)^\alpha\bigr).
    \end{equation}
    The proof of \cref{tangent excess lemma} gives
    \(\sqrt{1-\lambda}\,R(\lambda)^\alpha\leq1\). Hence the second term in \cref{H decomposition} is nonnegative, and it remains to prove
    \(P_\alpha(\lambda)\geq0\).

    Differentiating \(R(\lambda)\) and integrating by parts give
    \begin{equation}\label{R differential identity}
        R'(\lambda)
        =
        \frac{
            1-\sqrt{1-\lambda}\,R(\lambda)
        }{
            2\alpha\lambda\sqrt{1-\lambda}
        }.
    \end{equation}
    Expanding the integral at \(\lambda=0\) yields
    \begin{equation*}
        P_\alpha(\lambda)
        =
        \frac{\alpha(\alpha+1)}
        {2(2\alpha+1)^2(4\alpha+1)}\lambda^2
        +O(\lambda^3),
    \end{equation*}
    so \(P_\alpha(\lambda)>0\) for all sufficiently small
    \(\lambda>0\).

    Suppose, for the sake of a  contradiction, that \(P_\alpha(\lambda_1)<0\) for some
    \(\lambda_1\in(0,1)\). Then \(P_\alpha(\lambda)\) has a first zero
    \(\lambda_*\in(0,\lambda_1)\), at which
    \begin{equation*}
        P_\alpha(\lambda_*)=0,
        \qquad
        P_\alpha'(\lambda_*)\leq0.
    \end{equation*}
    Set
    \(k=\sqrt{1-\lambda_*}\,R(\lambda_*)\).
    Since \(P_\alpha(\lambda_*)=0\) and
    \(R(\lambda_*)^2-k^2=\lambda_*R(\lambda_*)^2\),
    \begin{equation}\label{P zero identity}
        \alpha R(\lambda_*)^{\alpha-1}
        \bigl(R(\lambda_*)^2-k^2\bigr)
        =
        (2\alpha+1)(1-k),
        \qquad
        0<k<1<R(\lambda_*).
    \end{equation}
    Differentiating \(P_\alpha(\lambda)\) and using
    \cref{R differential identity} at \(\lambda_*\) gives
    \begin{equation}\label{P derivative at zero}
        P_\alpha'(\lambda_*)
        =
        \frac{R(\lambda_*)}
        {2\sqrt{1-\lambda_*}(2\alpha+1)}
        \left[
            2\alpha+1
            -R(\lambda_*)^{\alpha-1}(\alpha k+\alpha+1)
            \right].
    \end{equation}

    We next prove
    \begin{equation*}
        1-k<\alpha\bigl(R(\lambda_*)^2-1\bigr).
    \end{equation*}
    By \cref{P zero identity}, \(1-k\) is the unique positive root of
    the quadratic equation
    \begin{equation*}
        \alpha R(\lambda_*)^{\alpha-1}t^2
        +
        \bigl(2\alpha+1
        -2\alpha R(\lambda_*)^{\alpha-1}\bigr)t
        -
        \alpha R(\lambda_*)^{\alpha-1}
        \bigl(R(\lambda_*)^2-1\bigr)
        =
        0.
    \end{equation*}
    Its left-hand side is negative at \(t=0\), while at
    \(t=\alpha\bigl(R(\lambda_*)^2-1\bigr)\) it equals
    \begin{equation*}
        \alpha\bigl(R(\lambda_*)^2-1\bigr)
        R(\lambda_*)^{\alpha-1}
        \left[
            \alpha^2\bigl(R(\lambda_*)^2-1\bigr)
            -
            (2\alpha+1)
            \bigl(1-R(\lambda_*)^{1-\alpha}\bigr)
            \right].
    \end{equation*}
    The remaining bracket is positive because
    \begin{align*}
         & \alpha^2\bigl(R(\lambda_*)^2-1\bigr)
        -(2\alpha+1)\bigl(1-R(\lambda_*)^{1-\alpha}\bigr) \\
         & \quad=
        \alpha^2\bigl(R(\lambda_*)-1\bigr)
        \bigl(R(\lambda_*)+1\bigr)
        -(2\alpha+1)(\alpha-1)
        \int_1^{R(\lambda_*)}u^{-\alpha}\,du              \\
         & \quad\geq
        2\alpha^2\bigl(R(\lambda_*)-1\bigr)
        -(2\alpha+1)(\alpha-1)
        \bigl(R(\lambda_*)-1\bigr) =
        (\alpha+1)\bigl(R(\lambda_*)-1\bigr)
        >0,
    \end{align*}
    which proves \(1-k<\alpha\bigl(R(\lambda_*)^2-1\bigr)\). Combining this inequality with \cref{P zero identity}, we obtain
    \begin{equation*}
        \frac{
            2\alpha+1
            -R(\lambda_*)^{\alpha-1}(\alpha k+\alpha+1)
        }{R(\lambda_*)^{\alpha-1}}
        =
        \frac{\alpha\bigl(R(\lambda_*)^2-1\bigr)}{1-k}-1
        >0.
    \end{equation*}
    Equation~\cref{P derivative at zero} now gives
    \(P_\alpha'(\lambda_*)>0\), a contradiction. Therefore
    \(P_\alpha(\lambda)\geq0\) on \((0,1)\).
\end{proof}

Having proved \cref{H inequality}, we now establish that
\(\Psi_s(x)\geq0\) for every \(x\geq0\). Substituting
\(x=\alpha z/M_\alpha\) yields the following estimate; its
specialization to \(z=r+s\) is \cref{E second term estimate}, which
controls the bracketed term in \cref{E decomposition}.

\begin{lemma}\label{second deficit estimate lemma}
    For every \(0<s<\pi_\alpha/2\) and \(x\geq0\), we have $\Psi_s(x)\geq0$.
    In particular, for every \(z>0\), it holds that
    \begin{equation}\label{2nd deficit inequality}
        s-
        \frac{M_\alpha\sin_\alpha s}
        {M_\alpha\cos_\alpha s+\alpha z\sin_\alpha(s)^{2\alpha-1}}
        \le
        \alpha s^{2\alpha}
        \left(\frac{z}{M_\alpha}-\frac{s}{2\alpha+1}\right).
    \end{equation}
\end{lemma}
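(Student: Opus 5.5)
The plan is to reduce \(\Psi_s(x)\geq0\) to \cref{H inequality} by a single application of the arithmetic--geometric mean inequality in the variable \(D:=\cos_\alpha s+x\sin_\alpha(s)^{2\alpha-1}\). Fix \(0<s<\pi_\alpha/2\), and write \(S:=\sin_\alpha s\) and \(c:=\cos_\alpha s\). On this interval \(S>0\) and \(c>0\), so \(D\geq c>0\) for every \(x\geq0\).

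First I eliminate \(x\) in favour of \(D\). Since \(x=(D-c)/S^{2\alpha-1}\), the definition \cref{Psis definition} becomes
\begin{equation*}
    \Psi_s(x)
    =
    \frac{s^{2\alpha}}{S^{2\alpha-1}}\,D+\frac{S}{D}
    -c\,\frac{s^{2\alpha}}{S^{2\alpha-1}}
    -s-\frac{\alpha}{2\alpha+1}s^{2\alpha+1}.
\end{equation*}
Only the first two terms depend on \(D\), and both are positive. The arithmetic--geometric mean inequality \(aD+S/D\geq2\sqrt{aS}\), with \(a=s^{2\alpha}/S^{2\alpha-1}\), gives the lower bound \(2s^\alpha/S^{\alpha-1}=2s\,(s/S)^{\alpha-1}\).

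Next I factor out \(s\) from all the remaining terms, using \(c\,s^{2\alpha}/S^{2\alpha-1}=s\,c\,(s/S)^{2\alpha-1}\). This yields
\begin{equation*}
    \Psi_s(x)\geq s\left[2\left(\frac{s}{S}\right)^{\alpha-1}-c\left(\frac{s}{S}\right)^{2\alpha-1}-1-\frac{\alpha}{2\alpha+1}s^{2\alpha}\right]=s\,\mathcal H_\alpha(s).
\end{equation*}
By \cref{H inequality}, \(\mathcal H_\alpha(s)\geq0\), and therefore \(\Psi_s(x)\geq0\) for every \(x\geq0\). This step simply invokes the previous lemma, which carries all the analytic difficulty. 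The only care needed here is the positivity of \(D\) and \(S\), which the arithmetic--geometric mean step requires.

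For \cref{2nd deficit inequality}, I take \(x=\alpha z/M_\alpha\geq0\) with \(z>0\). Multiplying the numerator and denominator of \(S/D\) by \(M_\alpha\) shows that \(S/D=M_\alpha S/(M_\alpha c+\alpha zS^{2\alpha-1})\). Rearranging \(\Psi_s(\alpha z/M_\alpha)\geq0\) then gives exactly
\begin{equation*}
    s-\frac{M_\alpha S}{M_\alpha c+\alpha zS^{2\alpha-1}}\leq\frac{\alpha z}{M_\alpha}s^{2\alpha}-\frac{\alpha}{2\alpha+1}s^{2\alpha+1},
\end{equation*}
and the right-hand side equals \(\alpha s^{2\alpha}\bigl(z/M_\alpha-s/(2\alpha+1)\bigr)\). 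This is \cref{2nd deficit inequality}.
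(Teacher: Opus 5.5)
Your proposal is correct and follows essentially the same route as the paper. The paper likewise rewrites \(\Psi_s(x)\) in terms of \(y=\cos_\alpha s+x\sin_\alpha(s)^{2\alpha-1}>0\), applies the arithmetic--geometric mean inequality to the two \(y\)-dependent terms to obtain \(\Psi_s(x)\geq s\mathcal H_\alpha(s)\geq0\) via \cref{H inequality}, and then substitutes \(x=\alpha z/M_\alpha\).
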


\begin{proof}
    It suffices to prove the first assertion, since substituting
    \(x=\alpha z/M_\alpha\) into \(\Psi_s(x)\geq0\) gives
    \cref{2nd deficit inequality}.
    Set
    \(y=\cos_\alpha(s)+x\sin_\alpha(s)^{2\alpha-1}>0\).
    By \cref{Psis definition} and the arithmetic--geometric mean
    inequality,
    \begin{align*}
        \Psi_s(x)
         & =
        \frac{s^{2\alpha}}{\sin_\alpha(s)^{2\alpha-1}}y
        +\frac{\sin_\alpha(s)}{y}
        -\cos_\alpha(s)s^{2\alpha}\sin_\alpha(s)^{1-2\alpha}
        -s-\frac{\alpha}{2\alpha+1}s^{2\alpha+1} \\
         & \geq
        2s^\alpha\sin_\alpha(s)^{1-\alpha}
        -\cos_\alpha(s)s^{2\alpha}\sin_\alpha(s)^{1-2\alpha}
        -s-\frac{\alpha}{2\alpha+1}s^{2\alpha+1} \\
         & =
        s\left[
             2\left(\frac{s}{\sin_\alpha(s)}\right)^{\alpha-1}
             -\cos_\alpha(s)
             \left(\frac{s}{\sin_\alpha(s)}\right)^{2\alpha-1}
             -1-\frac{\alpha}{2\alpha+1}s^{2\alpha}
             \right] =s\mathcal{H}_{\alpha}(s)
        \geq0,
    \end{align*}
    where the last inequality follows from \cref{H inequality}.
\end{proof}

Finally, combining \cref{tangent excess lemma} and
\cref{second deficit estimate lemma}, we obtain the strict positivity
of \(E\). The key is the unexpected compatibility between
\cref{tangent inequality,E second term estimate}: after the substitution
\(r=(L-1)s\), the desired positivity reduces exactly to the variational
characterization of \(M_\alpha\) in \cref{Malpha equivalent forms}. It
would be interesting to understand the geometric origin of this
compatibility and whether it can be used to determine sharp curvature
exponents in other models.

\begin{lemma}\label{E cannot have a zero}
    For all
    $0<r,s<\pi_\alpha/2$,
    one has
    $E(r,s)>0$.
\end{lemma}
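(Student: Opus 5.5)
Using the decomposition \cref{E decomposition}, I bound the two pieces separately and then compare the bounds. The first term is handled by \cref{tangent excess lemma}: for \(0<r<\pi_\alpha/2\),
\[
\tan_\alpha r-r>\frac{\alpha}{2\alpha+1}r^{2\alpha+1}.
\]
For the bracketed term I apply \cref{second deficit estimate lemma} with \(z=r+s>0\), which is exactly \cref{E second term estimate}. Combining the two gives
\[
E(r,s)>\alpha\left[\frac{r^{2\alpha+1}}{2\alpha+1}-\frac{s^{2\alpha}(r+s)}{M_\alpha}+\frac{s^{2\alpha+1}}{2\alpha+1}\right].
\]
Note that the strict inequality comes only from the first term, as the paper remarks.

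It therefore suffices to show that the bracket is nonnegative, that is,
\[
M_\alpha\bigl(r^{2\alpha+1}+s^{2\alpha+1}\bigr)\geq(2\alpha+1)s^{2\alpha}(r+s)
\]
for all \(r,s>0\). Both sides are homogeneous of degree \(2\alpha+1\). Dividing by \(s^{2\alpha+1}\) and substituting \(r=(L-1)s\) with \(L>1\) turns the inequality into
\[
M_\alpha\bigl((L-1)^{2\alpha+1}+1\bigr)\geq(2\alpha+1)L.
\]
This is precisely the statement
\[
\frac{(2\alpha+1)L}{(L-1)^{2\alpha+1}+1}\leq M_\alpha ,
\]
which holds for every \(L>1\) by the variational characterization \cref{Malpha equivalent forms}. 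Hence the bracket is \(\geq0\), and the strict inequality from \cref{tangent excess lemma} yields \(E(r,s)>0\).

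I do not expect any step to be a genuine obstacle, since all the analytic work has already been done in \cref{H inequality,second deficit estimate lemma,tangent excess lemma}. The only points to check are bookkeeping. First, \cref{second deficit estimate lemma} must be applied with \(z=r+s\), which lies in its allowed range because \(z>0\) is arbitrary there. Second, \(\Lambda_\alpha(r,s)>0\), so the quotient in \(E\) makes sense; this holds because \(\cos_\alpha s>0\) and \(\sin_\alpha s>0\) on \((0,\pi_\alpha/2)\). Third, one should confirm that the algebra combining the two bounds produces exactly the \(s^{2\alpha}(r+s)/M_\alpha\) and \(s^{2\alpha+1}/(2\alpha+1)\) terms with the signs shown above.
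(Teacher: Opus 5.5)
Your proposal is correct and is essentially the paper's own proof. You combine the decomposition of $E$ with the strict tangent excess inequality and the second-deficit estimate at $z=r+s$, substitute $r=(L-1)s$, and conclude from the variational characterization of $M_\alpha$, with strictness coming from the tangent excess lemma exactly as in the paper.
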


\begin{proof}
    Set \(L=(r+s)/s>1\). By \cref{E decomposition},
    \cref{tangent inequality}, and \cref{E second term estimate},
    \begin{align*}
        E(r,s)
         & >
        \frac{\alpha}{2\alpha+1}r^{2\alpha+1}
        -\alpha s^{2\alpha}
        \left(\frac{r+s}{M_\alpha}-\frac{s}{2\alpha+1}\right) =
        \alpha s^{2\alpha+1}
        \left[
            \frac{(L-1)^{2\alpha+1}+1}{2\alpha+1}
            -
            \frac{L}{M_\alpha}
            \right]
        \geq0,
    \end{align*}
    where the equality uses \(r=(L-1)s\), and the last inequality
    follows from \cref{Malpha equivalent forms}.
\end{proof}

For clarity, we now combine the preceding lemmas into a proof of
\cref{global inequality}, which, together with
\cref{lem:jacobian-mcp} and the discussion at the beginning of this
section, also proves
\cref{alpha Grushin MCP Theorem}.
\begin{proof}[Proof of \cref{global inequality}]
    Suppose, for contradiction, that
    \(g(z_1,\phi)>M_\alpha\) for some
    \(0<|z_1|<\pi_\alpha\) and \(\phi\in[0,2\pi_\alpha)\).
    By \cref{lem:boundary-input} and
    \cref{Malpha equivalent forms}, the map
    \(t\mapsto g(tz_1,\phi)\) extends continuously to \(t=0\) with a
    value strictly smaller than \(M_\alpha\). It therefore has a first
    contact with the level \(M_\alpha\) at some \(t_*\in(0,1)\).
    Since this level is reached from below,
    \(\frac{d}{dt}g(tz_1,\phi)|_{t=t_*}\geq0\). Thus, at
    \(z=t_*z_1\), one has \(g(z,\phi)=M_\alpha\) and
    \(z\partial_zg(z,\phi)
    =t_*\frac{d}{dt}g(tz_1,\phi)|_{t=t_*}\geq0\).
    \cref{Crossing Normal Form} then produces
    \(r,s\in(0,\pi_\alpha/2)\) such that \(E(r,s)=0\), contradicting
    \cref{E cannot have a zero}. This proves
    \cref{eq:MCPtoprove}.
    Finally, \cref{thm:blowup} shows that the upper bound is
    approached as \((z,\phi)\to(0,0)\). The supremum of \(g\) is
    thus \(M_\alpha\).
\end{proof}

\startappendix
\section{A lower-regularity Jacobian criterion}\label{app:first}
We give a formal proof justifying the application of the
\(\operatorname{MCP}(0,N)\) distortion inequality correspondence in
\cref{MCP theorem Rizzi} to the \(\alpha\)-Grushin plane.

\begin{lemma}[Jacobian criterion for the measure contraction property]
    \label{lem:jacobian-mcp}
    Let \(M\) carry a complete and ideal sub-Riemannian structure generated
    by locally Lipschitz vector fields \(X_1,\ldots,X_k\), and suppose that
    the associated Hamiltonian \(H\) is \(C^2\). Assume that the
    Carnot--Carath\'{e}odory distance induces the manifold topology, and let
    \(\mathfrak m\) be a smooth positive measure on \(M\). For \(q\in M\),
    define the \emph{cotangent injectivity domain} by
    \[
        \operatorname{Inj}^*(q)
        :=
        \left\{
        \lambda\in T_q^*M:
        H(\lambda)>0,\quad
        t_{\operatorname{cut}}(\lambda)>1
        \right\}
    \]
    and let $\operatorname{Inj}(q)
        :=
        \mathrm{exp}_q\bigl(\operatorname{Inj}^*(q)\bigr).$
    Suppose that, for every \(q\in M\), the set
    \(\operatorname{Inj}^*(q)\) is open, the restriction $\mathrm{exp}_q:
        \operatorname{Inj}^*(q)\longrightarrow\operatorname{Inj}(q)$
    is a \(C^1\) diffeomorphism, and
    \[
        \operatorname{Inj}(q)
        =
        M\setminus\bigl(\operatorname{Cut}(q)\cup\{q\}\bigr),
        \qquad
        \mathfrak m\bigl(\operatorname{Cut}(q)\bigr)=0.
    \]
    Write \(d\mathfrak m=\rho\,d\xi\) in local coordinates. Since
    \(s\lambda\in\operatorname{Inj}^*(q)\) for \(0<s\leq1\),
    \(J_q(t;\lambda)\) and \(J_q(1;\lambda)\) have the same sign for every
    \(0<t\leq1\).
    Then, for \(q_1=\mathrm{exp}_q(\lambda)\in\operatorname{Inj}(q)\),
    \[
        \beta_t(q,q_1)
        =
        \frac{
            \rho\bigl(\mathrm{exp}_{q}^t(\lambda)\bigr)
            J_q(t;\lambda)
        }{
            \rho(q_1)
            J_q(1;\lambda)
        },
        \qquad 0<t\leq1.
    \]
    For \(N\geq1\), the inequalities
    \[
        \beta_t(q,q_1)\geq t^N
    \]
    for every \(q\in M\), \(q_1\in\operatorname{Inj}(q)\), and
    \(t\in[0,1]\) hold if and only if \((M,d_{CC},\mathfrak m)\) satisfies
    \(\operatorname{MCP}(0,N)\).
\end{lemma}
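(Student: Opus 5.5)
The plan is to follow the structure of \cite[Theorem~9 and Lemma~44]{BarilariRizzi2019}, checking that smoothness of the generating fields is used there only through the regularity of the exponential map on the injectivity domain. Everything we need is assumed here: \(\operatorname{Inj}^*(q)\) is open, \(\mathrm{exp}_q\) is a \(C^1\) diffeomorphism onto \(\operatorname{Inj}(q)\), and the cut locus is negligible.

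\textbf{Step 1: the distortion formula.} Fix \(q\) and \(q_1=\mathrm{exp}_q(\lambda)\in\operatorname{Inj}(q)\). By hypothesis, each point of \(\operatorname{Inj}(q)\) is joined to \(q\) by a unique minimizer. For small \(r\), we have \(B(q_1,r)\subset\operatorname{Inj}(q)\), since the set is open and \(d_{CC}\) induces the topology. Therefore
\[
Z_t(q,B(q_1,r))=\mathrm{exp}_q^t\bigl(\mathrm{exp}_q^{-1}(B(q_1,r))\bigr).
\]
Set \(U_r:=\mathrm{exp}_q^{-1}(B(q_1,r))\). By the \(C^1\) change of variables,
\[
\mathfrak m(Z_t)=\int_{U_r}\rho(\mathrm{exp}^t_q)\,|J_q(t;\cdot)|,
\qquad
\mathfrak m(B)=\int_{U_r}\rho(\mathrm{exp}_q)\,|J_q(1;\cdot)|.
\]
For \(t<1\), \(\mathrm{exp}^t_q=\mathrm{exp}_q(t\,\cdot)\) restricted to \(U_r\) is injective, because \(t\lambda'\in\operatorname{Inj}^*(q)\). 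The sets \(U_r\) shrink to \(\lambda\), since \(\mathrm{exp}_q^{-1}\) is continuous. Continuity of the integrands then gives the ratio formula as \(r\to0\), with the limsup an honest limit.

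The sign statement follows by connectedness. The path \(s\mapsto s\lambda\), \(s\in(0,1]\), stays inside \(\operatorname{Inj}^*(q)\), where \(D\mathrm{exp}_q\) is invertible. Hence \(J_q(1;s\lambda)\neq0\) throughout. Homogeneity relates \(J_q(1;s\lambda)\) to \(J_q(s;\lambda)\) up to the positive factor \(s^{\dim}\), so \(J_q(t;\lambda)\) does not vanish for \(0<t\leq1\) and does not change sign.

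\textbf{Step 2: pointwise bounds imply \(\operatorname{MCP}(0,N)\).} Given a Borel set \(B\), remove the null set \(\operatorname{Cut}(q)\cup\{q\}\) and write \(B'=B\cap\operatorname{Inj}(q)\). Then
\[
\mathfrak m(Z_t(q,B))\geq\mathfrak m\bigl(\mathrm{exp}^t_q(\mathrm{exp}_q^{-1}B')\bigr)
=\int_{\mathrm{exp}_q^{-1}B'}\rho(\mathrm{exp}^t_q)\,|J_q(t)|.
\]
The pointwise bound \(\beta_t\geq t^N\) gives \(\rho(\mathrm{exp}^t_q)|J_q(t)|\geq t^N\rho(\mathrm{exp}_q)|J_q(1)|\) on the domain of integration. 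Integrating, the right-hand side is at least \(t^N\mathfrak m(B')=t^N\mathfrak m(B)\).

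For the measurability of \(Z_t\) I would pass to the Ohta formulation with a measurable selection of geodesics. Here the selection is canonical, namely \(\mathrm{exp}^t_q\circ\mathrm{exp}_q^{-1}\), which is continuous. So the Ohta definition, which requires a measurable choice of \(t\)-midpoints, is satisfied directly.

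\textbf{Step 3: the converse.} Assume \(\operatorname{MCP}(0,N)\). Apply the measure inequality to \(B=B(q_1,r)\subset\operatorname{Inj}(q)\). In this case \(Z_t\) is exactly the image from Step 1, since all minimizers from \(q\) to points of \(B\) are unique. Hence \(\mathfrak m(Z_t)/\mathfrak m(B)\geq t^N\). Letting \(r\to0\) and using Step 1 gives \(\beta_t\geq t^N\). The case \(t=0\) is trivial.

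\textbf{Main obstacle.} The delicate point is the identification of \(Z_t\) with the image of \(\mathrm{exp}^t_q\), together with the measure-theoretic bookkeeping that goes with it. Two things need care here.

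First, \(\operatorname{MCP}\) in Ohta's sense only asks for \emph{some} midpoint measure, rather than referring to the set \(Z_t\). So the reverse direction must show that any admissible choice of midpoints lies in \(\mathrm{exp}^t_q(U_r)\). This holds by the uniqueness of minimizers to points of \(\operatorname{Inj}(q)\).

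Second, the limsup must be handled uniformly. I would settle this by continuity of \(\mathrm{exp}_q^{-1}\), shrinking \(U_r\) to \(\lambda\) as in Step 1.
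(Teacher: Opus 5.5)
Your proposal is correct and follows essentially the same route as the paper. Uniqueness of minimizers identifies $Z_t(q,B(q_1,r))$ with the image of $B(q_1,r)$ under $\mathrm{exp}_q^t\circ(\mathrm{exp}_q)^{-1}$; the $C^1$ change of variables plus continuity gives the distortion formula; integrating the pointwise bound over $B\cap\operatorname{Inj}(q)$ with $\mathfrak m(\operatorname{Cut}(q))=0$ gives $\operatorname{MCP}(0,N)$; and applying $\operatorname{MCP}(0,N)$ to small balls gives the converse. The differences are cosmetic (you integrate over the cotangent preimage $U_r$ rather than over $B(q_1,r)$) or supply detail the paper leaves implicit, namely the sign claim via $J_q(s;\lambda)=s^{\dim M}J_q(1;s\lambda)$ and compatibility with Ohta's measurable-selection formulation.
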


\begin{proof}
    Fix \(q\in M\) and \(0<t\leq1\), and define
    \begin{equation}\label{eq:injectivity contraction map}
        \Phi_t^q
        :=
        \mathrm{exp}_{q}^t\circ(\mathrm{exp}_{q})^{-1}
        \colon \operatorname{Inj}(q)\longrightarrow\operatorname{Inj}(q).
    \end{equation}
    The fiberwise quadraticity of \(H\) gives
    \(t\operatorname{Inj}^*(q)\subseteq\operatorname{Inj}^*(q)\), so the
    map in \cref{eq:injectivity contraction map} is well defined and is a \(C^1\)
    diffeomorphism onto its image.

    Let \(q_1=\mathrm{exp}_q(\lambda)\in\operatorname{Inj}(q)\). Since
    \(\operatorname{Inj}(q)\) is open and \(d_{CC}\) induces the manifold
    topology, \(B(q_1,r)\subseteq\operatorname{Inj}(q)\) for every
    sufficiently small \(r>0\). Completeness and ideality give uniqueness of
    the minimizing geodesic from \(q\) to every point of
    \(\operatorname{Inj}(q)\), and hence
    \[
        Z_t(q,B(q_1,r))
        =
        \Phi_t^q(B(q_1,r)).
    \]
    Writing \(\lambda(p)=(\mathrm{exp}_q)^{-1}(p)\), the chain rule and the
    change-of-variables formula give
    \[
        \frac{\mathfrak m(Z_t(q,B(q_1,r)))}
        {\mathfrak m(B(q_1,r))}
        =
        \frac{1}{\mathfrak m(B(q_1,r))}
        \int_{B(q_1,r)}
        \frac{J_q(t;\lambda(p))}{J_q(1;\lambda(p))}
        \rho\bigl(\mathrm{exp}_q^t(\lambda(p))\bigr)\,d\xi(p).
    \]
    Since the integrands are continuous, letting \(r\to0\) yields
    \[
        \beta_t(q,q_1)
        =
        \frac{
            \rho\bigl(\mathrm{exp}_{q}^t(\lambda)\bigr)
            J_{q}(t;\lambda)
        }{
            \rho(q_1)
            J_{q}(1;\lambda)
        }.
    \]

    Suppose now that \(\beta_t(q,p)\geq t^N\) for every
    \(p\in\operatorname{Inj}(q)\). For any Borel set \(B\subseteq M\), another
    application of the change-of-variables formula gives
    \[
        \begin{aligned}
            \mathfrak m(Z_t(q,B))
             & \geq
            \mathfrak m\bigl(\Phi_t^q(B\cap\operatorname{Inj}(q))\bigr) =
            \int_{B\cap\operatorname{Inj}(q)}
            \beta_t(q,p)\,d\mathfrak m(p) \geq t^N\mathfrak m(B),
        \end{aligned}
    \]
    since \(\mathfrak m(M\setminus\operatorname{Inj}(q))=0\). Thus
    \(\operatorname{MCP}(0,N)\) holds.

    Conversely, if \(\operatorname{MCP}(0,N)\) holds, applying it to
    \(B(q_1,r)\) and letting \(r\to0\) in the definition of \(\beta_t\)
    gives \(\beta_t(q,q_1)\geq t^N\).
\end{proof}

For the \(\alpha\)-Grushin plane and every
\(q\in\mathbb G_\alpha^2\), the optimal synthesis recalled in
\cref{sec:alpha-grushin-mcp}, together with the absence of conjugate
times before the cut time, shows that
\(\mathrm{exp}_q:\operatorname{Inj}^*(q)\to\operatorname{Inj}(q)\) is a
\(C^1\) diffeomorphism and that
\(\operatorname{Inj}(q)
=\mathbb G_\alpha^2\setminus
\bigl(\operatorname{Cut}(q)\cup\{q\}\bigr)\). Moreover, the
cut locus has zero Lebesgue measure. The Hamiltonian
\(H(x,y,u,v)=\tfrac12(u^2+|x|^{2\alpha}v^2)\) is \(C^2\) for every
\(\alpha\geq1\), so the exponential map is \(C^1\). Thus
\cref{lem:jacobian-mcp} applies.

\printbibliography[heading=bibintoc]

\end{document}